\newtheorem{thm}{Theorem}[subsection]
\newtheorem{theorem}[thm]{Theorem}
\newtheorem{proposition}[thm]{Proposition}
\newtheorem{lemma}[thm]{Lemma}
\newtheorem{corollary}[thm]{Corollary}
\newtheorem{conjecture}[thm]{Conjecture}
\theoremstyle{definition}
\newtheorem{defn}[thm]{Definition}
\newtheorem{eg}[thm]{Example}
\newtheorem{rmk}[thm]{Remark}
\newcommand{\mc}{\mathcal}
\title{Circular Planar Electrical Networks I: The Electrical Poset $EP_{n}$}
\author{Joshua Alman, Carl Lian, and Brandon Tran\footnote{Department of Mathematics, Massachusetts Institute of Technology. \href{mailto:jalman@mit.edu}{jalman@mit.edu}, \href{mailto:clian@math.mit.edu}{clian@math.mit.edu}, \href{mailto:btran115@mit.edu}{btran115@mit.edu}}}
\begin{document}

\maketitle

\begin{abstract} Following de Verdi\`{e}re-Gitler-Vertigan and Curtis-Ingerman-Morrow, we prove a host of new results on circular planar electrical networks. We introduce a poset $EP_{n}$ of electrical networks with $n$ boundary vertices, giving two equivalent characterizations, one combinatorial and the other topological. We then investigate various properties of the $EP_{n}$, proving that it is graded by number of edges of critical representatives. Finally, we answer various enumerative questions related to $EP_{n}$, adapting methods of Callan and Stein-Everett.
\end{abstract}

\tableofcontents

\section{Introduction}

\numberwithin{equation}{section}

Circular planar electrical networks are a natural generalization of an idea from classical physics: that any electrical resistor network with two vertices connected to a battery behaves like one with a single resistor. When we embed a resistor network in a disk and allow arbitrarily many boundary vertices connected to batteries, the situation becomes more interesting. An inverse boundary problem for these electrical networks was studied in detail by de Verdi\`{e}re-Gitler-Vertigan \cite{french} and Curtis-Ingerman-Morrow \cite{curtis}: given the \textit{response matrix} of a network, that is, information about how the network responds to voltages applied at the boundary vertices, can the network be recovered?

In general, the answer is ``no,'' though much can be said about the information that can be recovered. If, for example, the underlying graph of the electrical network is known and is \textit{critical}, the conductances (equivalently, resistances) can be uniquely recovered \cite[Theorem 2]{curtis}. Moreover, any two networks which produce the same response matrix can be related by a certain class of combinatorial transformations, the \textit{local equivalences} \cite[Th\'{e}or\`{e}me 4]{french}.

The goal of this paper is to study more closely the rich theory of circular planar electrical networks. Specifically, we define a poset $EP_{n}$ of circular planar graphs, under the operations of contraction and deletion of edges, and investigate its properties.

For instance, the poset $EP_{n}$ has an intricate topological structure. By \cite[Theorem 4]{curtis} and \cite[Th\'{e}or\`{e}me 3]{french}, the space of response matrices for circular planar electrical networks of order $n$ decomposes as a disjoint union of open \textit{cells}, each diffeomorphic to a product of copies of the positive real line. In light of this decomposition, we can describe $EP_{n}$ as the poset of these cells under containment of closure:

\newtheorem*{introclosureorder}{Theorem \ref{closureorder}}
\begin{introclosureorder}
$[H]\le [G]$ in $EP_{n}$ if and only if $\Omega(H)\subset\overline{\Omega(G)}$, where $\Omega(H)$ denotes the space of response matrices for conductances on $H$.
\end{introclosureorder}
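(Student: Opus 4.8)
The plan is to prove the two directions separately, using the cell decomposition of the space $\Omega_n$ of order-$n$ response matrices as a disjoint union of the open cells $\Omega(G)$, $[G]\in EP_n$, each diffeomorphic to $(\mathbb{R}_{>0})^{m}$, where $m$ is the number of edges of a critical representative of $[G]$.

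For the forward implication, I would first reduce to the case where $[H]$ is covered by $[G]$, i.e. $H$ is obtained from $G$ by a single deletion or contraction of an edge $e$; the general case follows because $\le$ is generated by such covers and closure-containment is transitive ($\Omega(K)\subset\overline{\Omega(H)}$ and $\Omega(H)\subset\overline{\Omega(G)}$ give $\Omega(K)\subset\overline{\Omega(H)}\subset\overline{\overline{\Omega(G)}}=\overline{\Omega(G)}$). For a single cover, the key point is that the response matrix is a continuous, indeed rational, function of the edge conductances, computed from the weighted Laplacian by taking the Schur complement onto the boundary vertices. Deleting $e$ is realized as the limit in which the conductance of $e$ tends to $0$, and contracting $e$ as the limit in which it tends to $+\infty$; in either case, given any conductance on $H$ I can lift it to a one-parameter family of conductances on $G$ whose response matrices converge to the prescribed response matrix of $H$. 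Verifying that these limits compute exactly the response matrix of $H$ is a direct calculation with the Laplacian and its Schur complement, and it places $\Omega(H)$ inside $\overline{\Omega(G)}$.

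For the reverse implication, I would establish the stronger statement that $\overline{\Omega(G)}=\bigsqcup_{[H]\le[G]}\Omega(H)$. The containment $\supseteq$ is immediate from the forward direction together with the fact that a closure is closed. For $\subseteq$, the idea is to exhibit the right-hand side as a closed set containing $\Omega(G)$: I would extend the response map $(\mathbb{R}_{>0})^{E(G)}\to\Omega_n$ to the compact cube $[0,\infty]^{E(G)}$, where a coordinate equal to $0$ signals deletion and a coordinate equal to $\infty$ signals contraction of the corresponding edge. The image of this compact cube is compact, hence closed, contains $\Omega(G)$, and therefore contains $\overline{\Omega(G)}$; moreover each boundary face of the cube maps into the cell $\Omega(H)$ of the graph $H$ obtained from $G$ by deleting and contracting the relevant edges, so the image is exactly $\bigcup_{[H]\le[G]}\Omega(H)$. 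Granting this, if $\Omega(H)\subset\overline{\Omega(G)}=\bigsqcup_{[K]\le[G]}\Omega(K)$, then since the cells are pairwise disjoint and $\Omega(H)$ is nonempty, $\Omega(H)$ must coincide with one of the $\Omega(K)$ with $[K]\le[G]$, whence $[H]\le[G]$.

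The main obstacle I anticipate is the $\subseteq$ step, and specifically the claim that the response map extends to a well-defined continuous map on all of $[0,\infty]^{E(G)}$ whose restriction to each face lands in a single cell. The difficulty is that when several conductances degenerate simultaneously the limiting response matrix could a priori depend on the relative rates of degeneration, so continuity on the closed cube is not automatic. I would handle this by analyzing the Schur-complement formula face by face: on the face where a subset $C$ of edges is contracted and a subset $D$ is deleted, the surviving entries organize into the Laplacian of the minor $H=G/C\setminus D$, and one checks that the limit is independent of the path of degeneration, so the extension is continuous and its image on that face is precisely $\Omega(H)$. It is here that criticality and the grading of $EP_n$ by edge-number of critical representatives are used, to guarantee that the faces are indexed exactly by the classes $[H]\le[G]$ and that no spurious identifications of cells occur.
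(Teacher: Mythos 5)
Your forward direction is essentially the paper's: in the proof of Proposition \ref{closure} the authors use the diffeomorphism $r_{G}$ of Theorem \ref{diffeo} and observe that $\lim_{\gamma_{i}\to 0}r_{G}(\gamma)$ realizes deletion and $\lim_{\gamma_{i}\to\infty}r_{G}(\gamma)$ realizes contraction, then induct over covers exactly as you propose. The gap is in your reverse direction. The response map does \emph{not} extend continuously to the compact cube $[0,\infty]^{E(G)}$: on any face where the set of edges assigned $\infty$ contains an edge joining two boundary vertices --- or, more generally, a path joining two distinct boundary vertices --- the entries of the response matrix diverge (the effective conductance between those two boundary vertices tends to $+\infty$, so applying a positive voltage at one of them produces an unbounded current reading). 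These are precisely the contractions forbidden in the definition of $EP_{n}$, so such faces do not correspond to any $[H]\le[G]$, and no choice of value on them makes the extension continuous. Consequently the ``image of a compact cube is compact, hence closed'' argument, which is the crux of your $\subseteq$ step, is unavailable. Deleting the bad faces restores well-definedness but destroys compactness, so closedness of $\bigsqcup_{[H]\le[G]}\Omega(H)$ does not follow from your argument. (A secondary issue: even on the legal faces, path-independence of the limit under simultaneous degenerations is asserted but is itself the hard content, and your appeal to criticality and gradedness does not supply it.)

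The paper's proof circumvents exactly this obstruction by arguing sequentially rather than via a compactification: given $M\in\overline{\Omega(G)}$, write $M=\lim M_{k}$ with $M_{k}\in\Omega(G)$; the circular minors of $M$ are limits of nonnegative minors, so by Theorem \ref{circularminorspositive}(a) $M$ already lies in some cell $\Omega(H)$, with no surjectivity claim about a boundary map needed. One then pulls back to conductances $C_{k}=r_{G}^{-1}(M_{k})$ and shows each edge's conductance sequence tends to a finite nonnegative limit or to $+\infty$, and --- this is the step your cube cannot see --- the \emph{convergence of the $M_{k}$} forbids the boundary-edge conductances from tending to $+\infty$, since otherwise a suitable boundary voltage would drive a current entry of $M_{k}$ to $-\infty$. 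Contracting the $\infty$-edges and deleting the $0$-edges then exhibits $H\le G$. To salvage your approach you would need a properness or boundedness statement playing the same role as this sequence argument; as written, the reverse implication is not proved.
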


Using the important tool of \textit{medial graphs} developed in \cite{curtis} and \cite{french}, we also prove:

\newtheorem*{intrograded}{Theorem \ref{graded}}
\begin{intrograded}
$EP_{n}$ is graded by number of edges of critical representatives.
\end{intrograded}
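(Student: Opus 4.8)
The plan is to exploit the medial graph correspondence to reduce the statement to a combinatorial fact about crossing numbers, and then to verify the two defining properties of a graded poset for the candidate rank function $\rho([G])=$ (number of edges of a critical representative of $[G]$). Throughout I use that, by the local equivalence theory, every critical representative of a class $[G]$ has the same number of edges, and that this number equals the number of crossings in the (lensless) medial graph $M(G)$ of any critical representative. Thus $\rho$ is well-defined, and the theorem amounts to showing that $\rho$ is a rank function: that it is strictly order-preserving and that it increases by exactly one across every covering relation.

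Strict monotonicity is the easier half. Given $[H]<[G]$, we may realize the relation starting from a critical representative $G^{*}$ of $[G]$, which has $\rho([G])$ edges. Since the order on $EP_{n}$ is generated by deletion and contraction (up to local equivalence), some representative $H'$ of $[H]$ is obtained from $G^{*}$ by at least one such operation, and each operation removes exactly one edge; hence $H'$ has fewer than $\rho([G])$ edges, so $\rho([H])\le(\text{edges of }H')<\rho([G])$. In particular every covering relation already satisfies $\rho([G])-\rho([H])\ge 1$.

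The heart of the proof is to show that a covering relation cannot skip ranks, i.e. that $[H]\lessdot[G]$ forces $\rho([G])-\rho([H])=1$. I would establish this through the following key lemma: if $G$ is critical and $[H]<[G]$, then there is a single edge of $G$ whose deletion or contraction produces a class $[G']$ with $\rho([G'])=\rho([G])-1$ and $[H]\le[G']$. Granting the lemma, suppose $[H]\lessdot[G]$; the lemma produces $[G']$ with $[H]\le[G']$, $[G']<[G]$, and $\rho([G'])=\rho([G])-1$, and if $\rho([G])-\rho([H])\ge 2$ then $\rho([G'])>\rho([H])$ forces $[H]<[G']<[G]$, contradicting that the relation is a cover. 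Hence the difference is exactly one, and combined with monotonicity this shows that $\rho$ is a rank function and that $EP_{n}$ is graded.

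To prove the key lemma I would pass to medial graphs. An edge of $G$ corresponds to a crossing of two medial strands, and deletion and contraction correspond to the two ways of uncrossing that crossing; by Theorem \ref{closureorder} together with the medial description of the order, the relation $[H]\le[G]$ is realized by a sequence of such uncrossings carrying $M(G)$ toward $M(H)$. The content of the lemma is twofold: first, that some crossing admits a resolution whose result, after removing any lenses that appear, is again lensless with exactly one fewer crossing (so that $\rho$ drops by exactly one rather than overshooting, which it can do when the other resolution creates a lens); and second, that such a resolution can be chosen so that the resulting medial graph still dominates $M(H)$. I expect this compatibility statement — controlling lens creation so that each downward step lowers the crossing number by precisely one while remaining above the target class — to be the main obstacle, and I would isolate it as a lemma about lensless medial graphs, proved by a direct analysis of the local pictures at a crossing together with the combinatorics of which pairs of strands interleave on the boundary.
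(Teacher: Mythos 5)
Your overall architecture coincides with the paper's: the rank function $\rho([G])=$ (edge count of a critical representative) is well-defined by Theorem \ref{criticalcharacterization}, strict monotonicity follows because each contraction or deletion removes one edge while critical representatives minimize edge count, and everything then hinges on showing that covers drop $\rho$ by exactly one. Your ``key lemma,'' in the only case your argument actually uses it (namely $[H]\lessdot[G]$), is precisely the hard direction of the paper's Proposition \ref{coverrelation}: if removing an edge from a critical $G$ yields a non-critical graph, then $[G]$ does not cover the resulting class, because a strictly intermediate class $[G']$ exists. So the reduction is correct and the difficulty is correctly located --- but it is only located, not resolved.

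The genuine gap is that you leave this lemma to ``a direct analysis of the local pictures at a crossing together with the combinatorics of which pairs of strands interleave on the boundary,'' and a local analysis at the crossing does not suffice. When the crossing $p$ of geodesics $ab$ and $cd$ is broken, the lenses that appear can involve geodesics whose relevant intersections lie arbitrarily far from $p$; the paper must first apply a \emph{global} sequence of motions (Lemmas \ref{constructK} and \ref{constructK'}, in the spirit of \cite[Lemma 6.2]{curtis}) to replace $G$ by an equivalent $T$ in which no other geodesic enters the triangle at $p$, so that the edge removal visibly creates a parallel or series pair. Even after this normalization, the intermediate class $[G']$ is \emph{not} obtained by choosing a different resolution of the same crossing (which may be illegal, i.e.\ create a dividing line, or may overshoot in rank); it is obtained by first removing a \emph{different} edge of $T$ --- one of the two edges of the triangle or Y just created --- and the verification that $[G']\neq[H]$, hence $[H]<[G']<[G]$, is not pictorial: it uses a circular pair $(P;Q)\in\pi(T)\setminus\pi(H)$ and an argument that any connection realizing $(P;Q)$ must avoid one of the two candidate edges, so that edge can be removed without falling into $[H]$. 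Without the normalization lemmas and this connection-based argument your lemma is unproved. A smaller remark: your lemma also demands $\rho([G'])=\rho([G])-1$, which the paper neither proves nor needs --- strict intermediacy of $[G']$ already contradicts the cover relation, so you should weaken the lemma accordingly rather than take on the extra (unestablished) burden that $G'$ be critical.
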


In the last part of the paper, we embark on a study of the enumerative properties of $EP_{n}$. Medial graphs bear a strong resemblance to certain objects whose enumerative properties are known: stabilized-interval free (SIF) permutations, as studied by Callan \cite{callan}, and irreducible linked diagrams, as studied by Stein-Everett \cite{stein}. Exploiting this resemblance, we summarize and prove analogues of known results in the following theorem: 

\newtheorem*{introenumsummary}{Theorem \ref{enumsummary}}
\begin{introenumsummary}
Put $X_{n}=|EP_{n}|$, the number of equivalence classes of electrical networks of order $n$. Then:
\begin{enumerate}
\item[(a)] $X_{1}=1$ and 
\begin{equation*}
X_{n}=2(n-1)X_{n-1}+\sum_{j=2}^{n-2}(j-1)X_{j}X_{n-j}.
\end{equation*}
\item[(b)] $[t^{n-1}]X(t)^{n}=n\cdot(2n-3)!!$, where $X(t)$ is the generating function for the sequence $\{X_{i}\}$.
\item[(c)] $X_{n}/(2n-1)!! \to e^{-1/2}$ as $n\to\infty$.
\end{enumerate}
\end{introenumsummary}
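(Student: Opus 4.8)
The plan is to route all three parts through the combinatorial model supplied by medial graphs, under which a class in $EP_n$ corresponds to a \emph{linked (chord) diagram} on the $2n$ points of the boundary circle, the total number of diagrams of order $n$ being $(2n-1)!!$ and $X_n$ counting the \emph{irreducible} ones. Writing $L(t)=\sum_{n\ge 0}(2n-1)!!\,t^n$ for the generating function of all diagrams and $X(t)=\sum_{n\ge 0}X_n t^n$, with the convention $X_0=1$ for the empty network, the single structural fact I would extract first is the functional equation $L(t)=X\!\left(tL(t)\right)$, which expresses the unique decomposition of an arbitrary diagram into its irreducible pieces. All three parts then follow by combining this identity with the recurrence of (a).

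For (a), I would prove the recurrence by a deletion/splitting decomposition of an irreducible order-$n$ diagram, in the spirit of Callan. Distinguishing the strand incident to a fixed boundary point and deleting it leaves a diagram of order $n-1$ together with the data of where the two freed endpoints sat; when the result is again irreducible this yields the term $2(n-1)X_{n-1}$, the factor $2(n-1)$ recording the admissible reinsertion positions, while when it breaks into two irreducible blocks of orders $j$ and $n-j$ it contributes $(j-1)X_jX_{n-j}$, summed over $2\le j\le n-2$. The base case $X_1=1$ is immediate, and this combinatorial recurrence is equivalent to the functional equation $L=X(tL)$.

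For (b), I would convert to generating functions and apply Lagrange inversion. Setting $W(t)=tL(t)=\sum_{n\ge 1}(2n-3)!!\,t^n$, the relation $L=X(tL)$ is exactly $W=t\,X(W)$, so $W$ is the Lagrange solution with kernel $X$. Lagrange inversion then gives $[t^{n-1}]X(t)^n=n\,[t^n]W(t)=n\,[t^{n-1}]L(t)=n\,(2n-3)!!$, which is the claim; the only care needed is the index bookkeeping and checking that the recurrence does force $W=t\,X(W)$. For (c), I would follow Stein--Everett's asymptotic method. Extracting $[t^n]$ from $L=X(tL)$ yields $(2n-1)!!=\sum_{k=0}^{n}X_k\,[t^{n-k}]L(t)^k$. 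Writing $k=n-m$ and $\rho_n=X_n/(2n-1)!!$ and dividing by $(2n-1)!!$ gives $1=\sum_{m\ge 0}\rho_{n-m}\,\frac{(2(n-m)-1)!!}{(2n-1)!!}\,[t^{m}]L(t)^{n-m}$, where the $m=0$ term is exactly $\rho_n$. Using $[t^{m}]L^{n-m}\sim n^m/m!$ and $(2(n-m)-1)!!/(2n-1)!!\sim (2n)^{-m}$, the $m$-th block tends to $\rho\cdot 2^{-m}/m!$ with $\rho=\lim\rho_n$, so in the limit $1=\rho\sum_{m\ge 0}2^{-m}/m!=\rho\,e^{1/2}$, i.e.\ $\rho=e^{-1/2}$.

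The hard part will be (c): since $(2n-1)!!$ has zero radius of convergence, ordinary singularity analysis is unavailable, and one must instead justify the termwise asymptotics uniformly in $m$, control the tail coming from the small blocks (large $m$), and prove that $\rho_n$ genuinely converges rather than merely that any limit must equal $e^{-1/2}$ — exactly the delicate estimates carried out by Stein--Everett. The second sensitive point is (a): making the deletion decomposition reproduce the precise coefficients $2(n-1)$ and $(j-1)$ depends on the exact medial-graph dictionary, so that step must be matched carefully to the definitions established earlier.
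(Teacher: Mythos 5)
Your outline is sound and, for part (a), is essentially the paper's own argument: the paper proves the recurrence exactly by a Callan-style expansion/refinement at a distinguished boundary vertex $V_n$ (Lemmas \ref{explemma}--\ref{reflemma} and Theorem \ref{recurrence}), though its bookkeeping is an inclusion--exclusion in three cases ($X_{n-1}$ for the wire $A_nB_n$, $2(n-1)X_{n-1}-\sum_k X_kX_{n-k}$ for full refinements, $\sum_k kX_kX_{n-k}$ for non-full ones) rather than your cleaner two-case split, so your coefficients $2(n-1)$ and $(j-1)$ only emerge after that cancellation --- a point you rightly flag. For (b) and (c) you genuinely diverge from the paper. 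The paper proves (b) by an explicit bijection, following Callan, between ``$2n$-odd'' wiring diagrams (counted by $n\cdot(2n-3)!!$) and lists of $n$ full wiring diagrams of total order $n-1$, via cutting and concatenating Dyck paths; it never states or uses your functional equation in the proof. Your route through $W=t\,X(W)$ plus Lagrange inversion is cleaner and is consistent with the inverse-series formula $\sum_n X_nt^n = t/F^{\langle -1\rangle}(t)$ that the paper itself quotes (without proof, from OEIS) only later, in \S 4.3. For (c) the paper does not extract coefficients from the functional equation: it first proves $(2n-1)X_{n-1}<X_n<2nX_{n-1}$ by induction from the recurrence (Lemma \ref{ratio}), deducing convergence of $\rho_n=X_n/(2n-1)!!$, and then computes the limit by directly estimating the non-full diagrams, splitting them into a family $D_n$ (adjacent short dividing wires plus a full core, with $D_n/X_n\to\sqrt{e}-1$) and a remainder $E_n$ dominated by a crude count $Y_n$ with $Y_n/X_n\to 0$. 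Your coefficient-extraction identity $1=\sum_m \rho_{n-m}\frac{(2(n-m)-1)!!}{(2n-1)!!}[t^m]L^{n-m}$ is the generating-function shadow of exactly this splitting ($m=$ size of the complement of the core), and your termwise asymptotics $[t^m]L^{n-m}\sim n^m/m!$ and $(2(n-m)-1)!!/(2n-1)!!\sim(2n)^{-m}$ are correct.

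Two obligations remain on your side. First, the functional equation $L(t)=X(tL(t))$ is the load-bearing wall of your entire proposal, and you assert it in one line. It is true, but ``unique decomposition into irreducible pieces'' needs real care in this setting: fullness concerns dividing lines $V_iV_j$ through the \emph{marked} boundary vertices, a dividing line's two endpoints must be apportioned one to each side (so a line splits order $n$ into orders $k$ and $n-k$, as in the paper's proof of Theorem \ref{recurrence}), and a naive ``insert a sub-diagram into each gap of a full core'' does not reproduce diagrams verbatim --- wires get re-spliced at the insertion points, so you must set up a canonical (e.g.\ rooted at $V_n$, or via the non-crossing partition of dividing lines from \S 4.3) decomposition and verify the substitution $t\mapsto tL(t)$ matches it. Relatedly, your convention $X_0=1$ is the correct one (and silently corrects the paper, whose stated convention $X(0)=0$ in Theorem \ref{gf} would force $[t^{n-1}]X(t)^n=0$; its bijection counts lists of $n$ full diagrams of total order $n-1$, which requires allowing empty diagrams). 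Second, for (c) you correctly identify that convergence of $\rho_n$ and uniform tail control are the hard analytic content, but you leave them unsupplied; note that the paper's mechanism for convergence is the ratio bound of Lemma \ref{ratio}, proved by induction from the recurrence of part (a), and a Fatou-type argument alone would give you only $\limsup$/$\liminf$ information without the tail estimate playing the role of the paper's $Y_n$ bound. With those two items filled in, your proof goes through and is arguably more systematic than the paper's, deriving all three parts from one structural identity.
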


The roadmap of the paper is as follows. In an attempt to keep the exposition as self-contained as possible, we carefully review terminology and known results in \S\ref{defs}, where we also establish some basic properties of electrical networks. In \S\ref{poset}, we define the poset $EP_{n}$, and establish the equivalence of our two characterizations in Theorem \ref{closureorder}. We then prove Theorem \ref{graded}, and investigate various conjectural properties of $EP_{n}$, such as Eulerianness. The study of enumerative properties of $EP_{n}$ is undertaken in \S\ref{enumerative}, where we prove the three parts of Theorem \ref{enumsummary}, and conclude by studying the rank sizes $|EP_{n,r}|$.

\section{Electrical Networks}\label{defs}

\numberwithin{equation}{subsection}

We begin a systematic discussion of electrical networks by recalling various notions and results from \cite{curtis}. We will also introduce some new terminology and conventions which will aid our exposition, in some cases deviating from \cite{curtis}.

\subsection{Circular Planar Electrical Networks, up to equivalence}
 
\begin{defn} A \textbf{circular planar graph} $\Gamma$ is a planar graph embedded in a disk $D$. $\Gamma$ is allowed to have self-loops and multiple edges, and has at least one vertex on the boundary of $D$ - such vertices are called \textbf{boundary vertices}. A \textbf{circular planar electrical network} is a circular planar graph $\Gamma$, together with a \textbf{conductance map} $\gamma:E(\Gamma)\rightarrow\mathbb{R}_{>0}$. \end{defn}

To avoid cumbersome language, we will henceforth refer to these objects as \textbf{electrical networks.} We will also call the number of boundary vertices of an electrical network (or a circular planar graph) its \textbf{order}.

We can interpret this construction as an electrical network in the physical sense, with a resistor existing on each edge $e$ with conductance $\gamma(e)$. Electrical networks satisfy \textbf{Ohm's Law} and \textbf{Kirchhoff's Laws}, classical physical phenomena which we neglect to explain in detail here. Given an electrical network $(\Gamma,\gamma)$, suppose that we apply electrical potentials at each of the boundary vertices $V_{1},\ldots,V_{n}$, inducing currents through the network. Then, we get a map $f:\mathbb{R}^{n}\rightarrow\mathbb{R}^{n}$, where $f$ sends the potentials $(p_{1},\ldots,p_{n})$ applied at the vertices $V_{1},\ldots,V_{n}$ to the currents $(i_{1},\ldots,i_{n})$ observed at $V_{1},\ldots,V_{n}$. We will take currents going out of the boundary to be negative and those going in to the boundary to be positive.

\begin{rmk}
The convention for current direction above is the opposite of that used in \cite{curtis}, but we will prefer it for the ensuing elegance of the statement of Theorem \ref{circularminorspositive}a.
\end{rmk}

In fact, $f$ is linear (see \cite[\S 1]{curtis}), and we have natural bases for the spaces of applied voltages and observed currents at the boundary vertices. Thus, we can make the following definition:

\begin{defn}
Given an electrical network $(\Gamma,\gamma)$, define the \textbf{response matrix} of the network to be the linear map $f$ constructed above from applied voltages to observed currents, in terms of the natural bases indexed by the boundary vertices.
\end{defn}

\begin{defn}
Two electrical networks $(\Gamma_{1},\gamma_{1}),(\Gamma_{2},\gamma_{2})$ are \textbf{equivalent} if they have the same response matrix. In other words, the two networks cannot be distinguished only by applying voltages at the boundary vertices and observing the resulting currents. The equivalence relation is denoted by $\sim$.
\end{defn}

We will study electrical networks up to equivalence. We have an important class of combinatorial transformations that may be applied to electrical networks, known as \textbf{local equivalences}, described below. These transformations may be seen to be equivalences by applications of Ohm's and Kirchhoff's Laws. Note that all of these local equivalences may be performed in reverse.

\begin{enumerate}
\item \textbf{Self-loop and spike removal.} Self-loops (cycles of length 1) and spikes (edges adjoined to non-boundary vertices of degree 1) of any conductances may always be removed.
\item \textbf{Replacement of edges in parallel.} Two edges $e_{1},e_{2}$ between with common endpoints $v,w$ may be replaced by a single edge of conductance $\gamma(e_{1})+\gamma(e_{2})$.
\item \textbf{Replacement of edges in series.} Two edges $v_{1}w,wv_{2}$ ($v_{1}\neq v_{2}$) meeting at a vertex $w$ of degree 2 may be replaced by a single edge $v_{1}v_{2}$ of conductance $((\gamma(v_{1}w)^{-1}+\gamma(wv_{2})^{-1})^{-1}$.
\item \textbf{Y-$\Delta$ transformations.} (See Figure \ref{YD}) Three edges $v_{1}w,v_{2}w,v_{3}w$ meeting at a non-boundary vertex $w$ of degree 3 may be replaced by three edges $v_{1}v_{2},v_{2}v_{3},v_{3}v_{1}$, of conductances 
\begin{equation*}
\frac{\gamma_{1}\gamma_{2}}{\gamma_{1}+\gamma_{2}+\gamma_{3}},\frac{\gamma_{2}\gamma_{3}}{\gamma_{1}+\gamma_{2}+\gamma_{3}},\frac{\gamma_{3}\gamma_{1}}{\gamma_{1}+\gamma_{2}+\gamma_{3}},
\end{equation*}
where $\gamma_{i}$ denotes the conductance $\gamma(v_{i}w)$.
\end{enumerate}

\begin{figure}
\begin{center}
\includegraphics[scale=0.4]{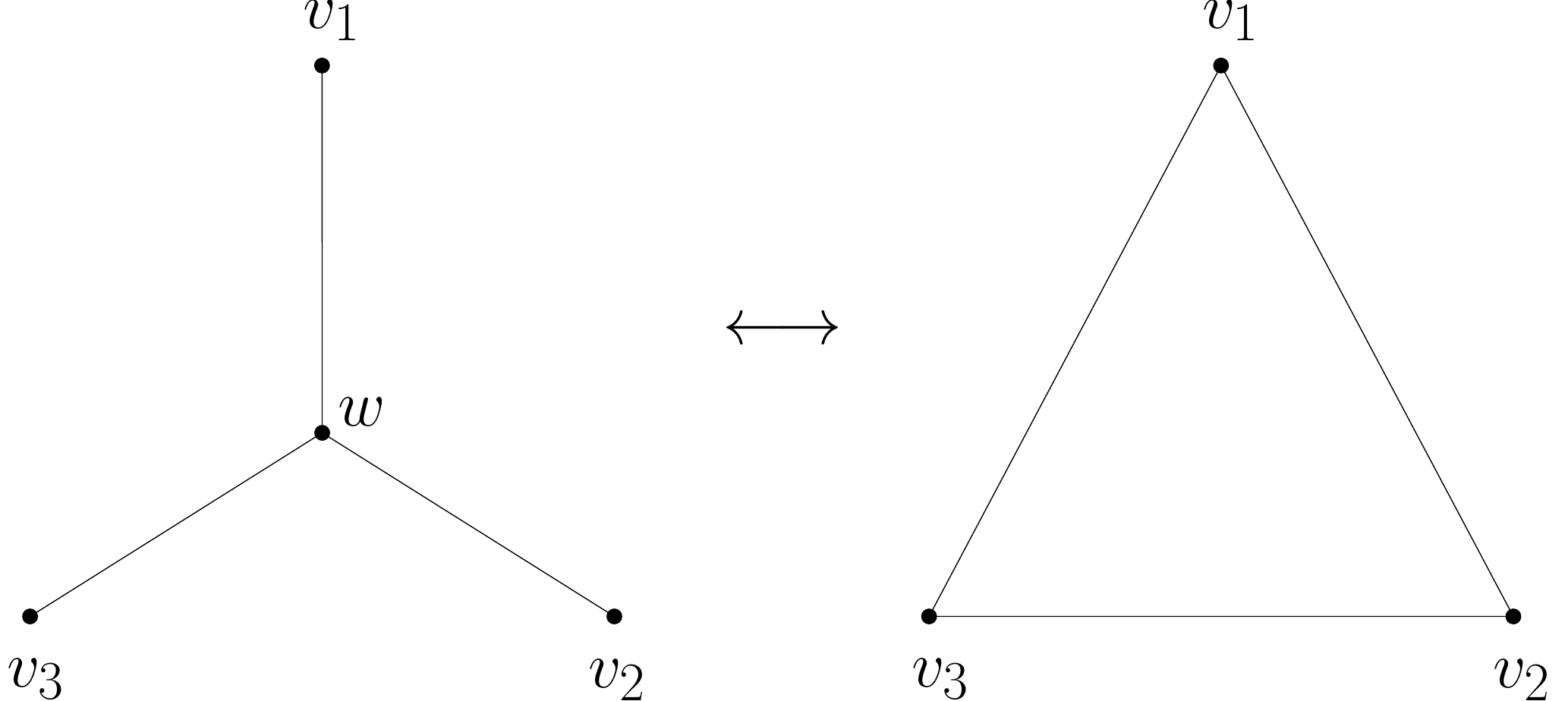}
\caption{Y-$\Delta$ transformation.}\label{YD}
\end{center}
\end{figure}

In fact, local equivalences are sufficient to generate equivalence of electrical networks:

\begin{theorem}[{\cite[Th\'{e}or\`{e}me 4]{french}}]\label{localeqenough}
Two electrical networks are equivalent if and only if they are related by a sequence of local equivalences.
\end{theorem}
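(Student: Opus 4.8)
The two directions demand rather different arguments, and I would treat them separately. The ``if'' direction is the routine one: I would verify that each of the four local equivalences preserves the response matrix by direct appeal to Ohm's and Kirchhoff's Laws. Series and parallel replacements are just the classical resistor-combination rules, while the Y-$\Delta$ formulas are precisely those that equate the current–voltage relations of the two configurations at the external vertices $v_1, v_2, v_3$ after eliminating the internal degree-3 vertex $w$. Since the response matrix records only boundary currents against boundary voltages, and each move leaves this boundary behavior unchanged, any sequence of such moves preserves the response matrix; hence networks related by local equivalences are equivalent.

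The ``only if'' direction is the substantial one, and my plan is to route it through the theory of critical networks and medial graphs. First I would show that any network can be reduced, using only local equivalences, to a critical representative, i.e.\ one admitting no edge-reducing move. This is a termination argument: self-loop and spike removals and series/parallel replacements each strictly decrease the edge count, so repeated application must halt at a network whose only remaining moves are Y-$\Delta$ transformations, which preserve edge count. Given two equivalent networks $(\Gamma_1, \gamma_1)$ and $(\Gamma_2, \gamma_2)$, I reduce each to a critical network; by the ``if'' direction just established, the two criticals still share the common response matrix $M$.

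The heart of the argument is to show that two critical networks with the same response matrix are related by Y-$\Delta$ transformations alone, and here I would pass to medial graphs. The matrix $M$ determines the set of \emph{connections} supported by the network — the circular pairs of boundary vertices joinable by vertex-disjoint paths — since these are read off from the vanishing and nonvanishing of the circular minors of $M$. For a critical network the medial graph is a lens-free arrangement of strands in the disk, and its wiring (which boundary points each strand connects) is exactly the connection data. The key structural claim is that any two lens-free strand arrangements realizing the same wiring are related by a sequence of triangle ``swap'' moves, which correspond precisely to Y-$\Delta$ transformations on the primal graphs. This is the analogue, for circular wiring diagrams, of the statement that any two reduced words for a permutation differ by braid moves; proving it — controlling the strand topology in the disk and ruling out a lens that could obstruct the chain of swaps — is the main obstacle.

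Once the two critical graphs have been made identical by Y-$\Delta$ moves, which, being local equivalences, preserve the response matrix, I would finish by invoking the uniqueness of conductance recovery for critical networks \cite[Theorem 2]{curtis}: on a fixed critical graph the map from conductances to response matrices is injective, so equal response matrices force equal conductances. At that point the two networks are literally identical, completing a chain of local equivalences connecting $(\Gamma_1, \gamma_1)$ to $(\Gamma_2, \gamma_2)$.
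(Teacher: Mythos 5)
The paper does not prove this statement at all: it is imported verbatim as \cite[Th\'{e}or\`{e}me 4]{french}, so there is no in-paper proof to compare against, and your proposal must be judged as a reconstruction of the literature's argument. At the level of architecture it is the right reconstruction --- reduce to critical representatives, use the fact that the response matrix determines the connection set via circular minors (Theorem \ref{circularminorspositive}b, proved in \cite{curtis} independently of the present theorem, so no circularity there), show two critical graphs with the same connections are Y-$\Delta$ equivalent via medial graphs, and finish with uniqueness of conductance recovery on a critical graph. Your final step, in particular, is handled correctly: after Y-$\Delta$ moves identify the graphs, injectivity of the conductance-to-response map forces the conductances to agree.

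There is, however, a concrete gap in your first reduction step. You argue that since loop/spike removals and series/parallel replacements strictly decrease edge count, the process ``must halt at a network whose only remaining moves are Y-$\Delta$ transformations,'' and you treat that halting point as critical. But criticality is not ``no reduction move is currently available'': it asks that no deletion or contraction preserves the connection set, and by Theorem \ref{criticalmedial} it is equivalent to the medial graph being lensless. A medial graph can contain a lens with other geodesics passing through it; the corresponding network admits no immediate series or parallel reduction, yet it is not critical, and only after a chain of Y-$\Delta$ moves (motions emptying the lens) does a reducible pair appear. Your edge-count measure is blind to these moves, so your termination argument stalls at a possibly non-critical network, at which point Theorem \ref{equivalentconnections} (and your strand-swap claim) no longer applies. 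The repair is standard but is a genuinely missing idea: use the number of crossings of the medial graph, which is invariant under motions and strictly drops at each lens resolution, together with the lemma (cf. \cite[Lemma 6.2]{curtis}) that an innermost lens can be emptied of geodesics by finitely many motions. Separately, you defer the claim that lensless diagrams with the same boundary matching are motion-equivalent (Proposition \ref{motionsenough}, i.e.\ \cite[Theorem 7.2]{curtis}); you are right that it is the heart of the matter, but since your sketch both assumes that and has the flawed termination step, the two hardest ingredients of the theorem remain unestablished.
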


When dealing with electrical networks, we will sometimes avoid making any reference to the conductance map $\gamma$, and instead consider just the underlying circular planar graph $\Gamma$. In doing so, we will abuse terminology by calling circular planar graphs ``electrical networks.'' In practice, we will only use the following notion of equivalence on circular planar graphs:

\begin{defn}
Let $\Gamma_{1},\Gamma_{2}$ be circular planar graphs, each with the same number of boundary vertices. Then, $\Gamma_{1},\Gamma_{2}$ are \textbf{equivalent} if there exist conductances $\gamma_{1},\gamma_{2}$ on $\Gamma_{1},\Gamma_{2}$, respectively such that $(\Gamma_{1},\gamma_{1}),(\Gamma_{2},\gamma_{2})$ are equivalent electrical networks. As with electrical networks, this equivalence is denoted $\sim$.
\end{defn}

It is clear that we may still apply local equivalences with this notion of equivalence. Furthermore, we have an analogue of Theorem \ref{localeqenough}: two circular planar graphs are equivalent if and only if they can be related by local equivalences, where we ``forget'' the conductances.

\subsection{Circular Pairs and Circular Minors}

Circular pairs and circular minors are central to the characterization of response matrices.

\begin{defn}\label{circpair}
Let $P=\{p_{1},p_{2},\ldots,p_{k}\}$ and $Q=\{q_{1},q_{2},\ldots,q_{k}\}$ be disjoint ordered subsets of the boundary vertices of an electrical network $(\Gamma,\gamma)$. We say that $(P;Q)$ is a \textbf{circular pair} if $p_{1},\ldots,p_{k},q_{k},\ldots,q_{1}$ are in clockwise order around the circle. We will refer to $k$ as the \textbf{size} of the circular pair.
\end{defn}

\begin{rmk}\label{flipPQ}
We will take $(P;Q)$ to be the same circular pair as $(\widetilde{Q};\widetilde{P})$, where $\widetilde{P}$ denotes the ordered set $P$ with its elements reversed. Almost all of our definitions and statements are compatible with this convention; most notably, by Theorem \ref{circularminorspositive}a, because response matrices are positive, the circular minors $M(P;Q)$ and $M(\widetilde{Q};\widetilde{P})$ are the same. Whenever there is a question as to the effect of choosing either $(P;Q)$ or $(\widetilde{Q};\widetilde{P})$, we take extra care to point the possible ambiguity.
\end{rmk}

\begin{defn}\label{connection}
Let $(P;Q)$ and $(\Gamma,\gamma)$ be as in Definition \ref{circpair}. We say that there is a \textbf{connection} from $P$ to $Q$ in $\Gamma$ if there exists a collection of vertex-disjoint paths from $p_{i}$ to $q_{i}$ in $\Gamma$, and furthermore each path in the collection contains no boundary vertices other than its endpoints. We denote the set of circular pairs $(P;Q)$ for which $P$ is connected to $Q$ by $\pi(\Gamma)$.
\end{defn}

\begin{defn}
Let $(P;Q)$ and $(\Gamma,\gamma)$ be as in Definition \ref{circpair}, and let $M$ be the response matrix. We define the \textbf{circular minor} associated to $(P;Q)$ to be the determinant of the $k\times k$ matrix $M(P;Q)$ with $M(P;Q)_{i,j}=M_{p_{i},q_{j}}$.
\end{defn}

\begin{rmk}
We will sometimes refer to submatrices and their determinants both as minors, interchangeably. In all instances, it will be clear from context which we mean.
\end{rmk}

We are interested in circular minors and connections because of the following result from \cite{curtis}:

\begin{theorem}\label{circularminorspositive}
Let $M$ be an $n\times n$ matrix. Then:
\begin{enumerate}
\item[(a)] $M$ is the response matrix for an electrical network $(\Gamma,\gamma)$ if and only if $M$ is symmetric with row and column sums equal zero, and each of the circular minors $M(P;Q)$ is non-negative. 
\item[(b)] If $M$ is the response matrix for an electrical network $(\Gamma,\gamma)$, the positive circular minors $M(P;Q)$ are exactly those for which there is a connection from $P$ to $Q$.
\end{enumerate}
\end{theorem}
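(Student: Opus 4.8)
The plan is to realize the response matrix as a Schur complement of the weighted graph Laplacian and to extract all three claims from the combinatorics of eliminating the interior vertices. Let $K$ be the \emph{Kirchhoff matrix} of $(\Gamma,\gamma)$, indexed by all vertices, with off-diagonal entry $K_{vw}=-\sum_{e:v\to w}\gamma(e)$ and diagonal entries chosen so that each row sums to zero. Ordering the $n$ boundary vertices first and the interior vertices last, partition
\begin{equation*}
K=\begin{pmatrix} A & B\\ B^{T} & C\end{pmatrix}.
\end{equation*}
Ohm's and Kirchhoff's laws say precisely that, given boundary potentials $p$, the induced interior potentials are the harmonic extension solving $B^{T}p+Cu=0$, and the boundary currents are then $Ap+Bu$; eliminating $u$ identifies the response matrix with the Schur complement $M=A-BC^{-1}B^{T}$. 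Here $C$ is a proper principal submatrix of the positive semidefinite $K$ and is invertible because $\Gamma$ is connected to the boundary, so this reduces the whole problem to understanding minors of a Schur complement of a Laplacian.

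From this description the easy half of (a) is immediate: $K$ symmetric forces $M$ symmetric, and writing $\mathbf{1}=(\mathbf{1}_{\partial},\mathbf{1}_{\mathrm{int}})$ for the all-ones vector, the identity $K\mathbf{1}=0$ gives $B^{T}\mathbf{1}_{\partial}+C\mathbf{1}_{\mathrm{int}}=0$ and hence $M\mathbf{1}_{\partial}=A\mathbf{1}_{\partial}+B\mathbf{1}_{\mathrm{int}}=0$, so $M$ has vanishing row and column sums.

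The substance of (a) and all of (b) is the nonnegativity and connection characterization of the circular minors, which I would obtain from a combinatorial formula. Combining the Jacobi/Dodgson relation between minors of $M$ and of $K$ with the all-minors matrix-tree theorem, I expect $\det M(P;Q)$ to equal a ratio whose denominator is the positive weighted count of spanning forests rooted at the boundary, and whose numerator is a weighted sum over families of vertex-disjoint paths joining $P$ to $Q$ through the interior together with a compatible forest on the remaining vertices. The crucial step is a Lindstr\"om--Gessel--Viennot argument: a priori $\det M(P;Q)$ is an alternating sum over permutations $\sigma$ of families of vertex-disjoint paths from $p_{i}$ to $q_{\sigma(i)}$, but because $\Gamma$ is planar and $p_{1},\dots,p_{k},q_{k},\dots,q_{1}$ lie in this cyclic order on the boundary circle, any nonidentity $\sigma$ forces two paths to cross, which is impossible for disjoint paths in the disk. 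Hence only $\sigma=\mathrm{id}$ survives, every surviving term carries the same sign, and the total is a sum of positive weights. This simultaneously shows $\det M(P;Q)\ge 0$ and that it is strictly positive exactly when some such disjoint path family exists, i.e. exactly when $(P;Q)\in\pi(\Gamma)$, proving (b) and the minor inequalities in (a).

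The main obstacle is the converse of (a): given a symmetric $M$ with zero row sums and all circular minors nonnegative, construct a realizing network. Here I would argue that the pattern of which circular minors vanish is itself the combinatorial data of a circular planar graph, and then recover conductances on a critical representative. First one shows that the nonnegativity and row-sum conditions force the vanishing set of circular minors to be consistent with a connection collection realizable by some circular planar graph $\Gamma$ (the well-connected, or critical, model for that cell). One then runs a recovery algorithm: using boundary spikes and the edges incident to a fixed boundary vertex, a carefully chosen sequence of circular minors determines the individual conductances one at a time, the minor inequalities guaranteeing at each stage that the recovered conductance is positive and well-defined. Verifying that this algorithm terminates and reproduces $M$ is the delicate part, and it is where the theory of critical and medial graphs does the real work; the planar Lindstr\"om--Gessel--Viennot sign analysis of the forward direction is comparatively routine once the forest formula is in hand.
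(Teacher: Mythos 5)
The first thing to note is that the paper does not prove this theorem at all: its ``proof'' is a citation to Curtis--Ingerman--Morrow ((a) from their Theorem 4, restated later as Theorem \ref{diffeo}; (b) from their Theorem 4.2), together with the remark that the paper's sign convention for currents removes the factors of $(-1)^{k}$. Your proposal is in effect a blind reconstruction of the CIM proofs themselves, and the forward direction is essentially sound: the Schur complement identity $M=A-BC^{-1}B^{T}$, the zero row and column sums from $K\mathbf{1}=0$, and the path/forest analysis of circular minors are exactly the route taken there. One technical caveat: the Lindstr\"om--Gessel--Viennot step as you state it is incomplete. Expanding $\det M(P;Q)$ (via the identity $\det M(P;Q)=\pm\det K(P\cup I;\,Q\cup I)/\det C$, with $I$ the set of interior vertices, and the all-minors matrix-tree theorem) produces an alternating sum over path families that need \emph{not} be vertex-disjoint; the planarity/crossing argument only shows that \emph{disjoint} families occur solely for $\sigma=\mathrm{id}$. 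You still need the standard tail-swap involution to cancel the intersecting families in pairs; only then do the surviving terms all carry the same sign, giving nonnegativity and, since the surviving terms are precisely the disjoint path systems, part (b) as well.

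The genuine gap is the converse of (a), which is the hard content of the cited results and where your proposal stops being a proof. Asserting that the vanishing pattern of circular minors ``is consistent with a connection collection realizable by some circular planar graph,'' and that a recovery algorithm ``determines the conductances one at a time,'' assumes precisely what must be shown: that the sets $\pi(G)$ for circular planar $G$ exhaust the admissible vanishing patterns, and that the boundary-spike/boundary-edge recovery on a critical representative always yields finite, strictly positive conductances whose network reproduces $M$ exactly (not merely a matrix with the same minors). In Curtis--Ingerman--Morrow this occupies a substantial portion of the paper -- the parametrization of cells, the recovery algorithm, and the medial-graph machinery that Theorems \ref{critrep}, \ref{equivalentconnections}, and \ref{criticalmedial} here summarize -- and no short self-contained argument is known. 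So your outline faithfully tracks the literature, and is a reasonable plan, but as a standalone proof the converse half is a program rather than an argument; the paper under review sidesteps this entirely by citing the result, which is the honest comparison point.
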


\begin{proof}
(a) is immediate from \cite[Theorem 4]{curtis}, which we will state as Theorem \ref{diffeo} later. (b) is \cite[Theorem 4.2]{curtis}. Note that, because we have declared current going into the circle to be negative, we do not have the extra factors of $(-1)^{k}$ as in \cite{curtis}.
\end{proof}

We now define two operations on the circular planar graphs and electrical networks. Each operation decreases the total number of edges by one.

\begin{defn}
Let $G$ be a circular planar graph, and let $e$ be an edge with endpoints $v,w$. The \textbf{deletion} of $e$ from $G$ is exactly as named; the edge $e$ is removed while leaving the rest of the vertices and edges of $G$ unchanged. If $v,w$ are not both boundary vertex of $G$, we may also perform a \textbf{contraction} of $e$, which identifies all points of $e$. If exactly one of $v,w$ is a boundary vertex, then the image of $e$ under the contraction is a boundary vertex. Note that edges connecting two boundary vertices cannot be contracted to either endpoint. See Figure \ref{deletecontract}.
\end{defn}

\begin{figure}
        \centering
        \begin{subfigure}[t]{0.3\textwidth}
                \centering
                \includegraphics[width=\textwidth]{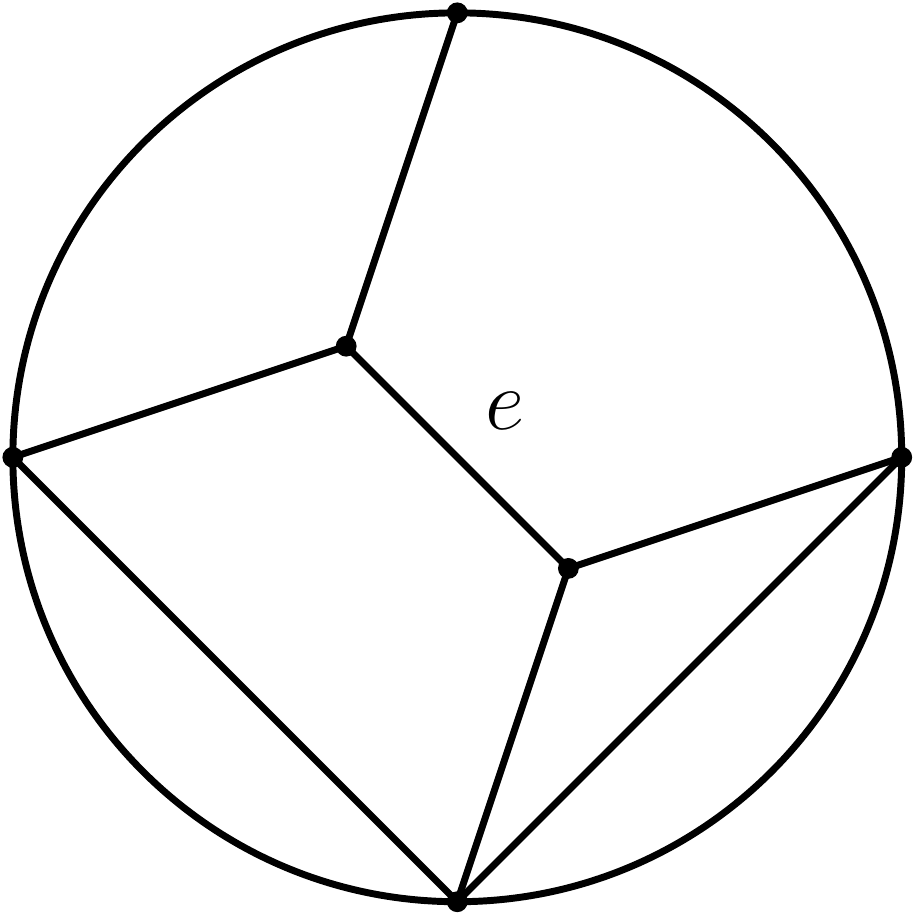}
                \caption{A circular planar graph $G$}
        \end{subfigure}%
        ~ 
        \begin{subfigure}[t]{0.3\textwidth}
                \centering
                \includegraphics[width=\textwidth]{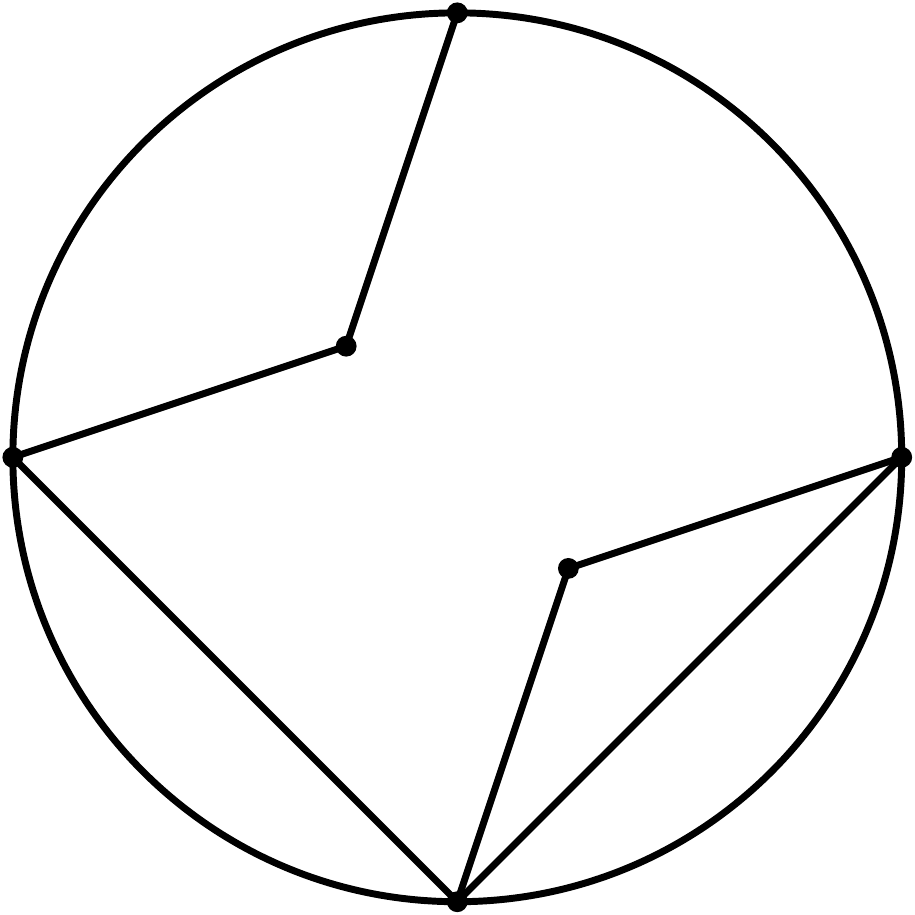}
                \caption{The deletion of edge $e$ from $G$}
        \end{subfigure}
        ~ 
        \begin{subfigure}[t]{0.3\textwidth}
                \centering
                \includegraphics[width=\textwidth]{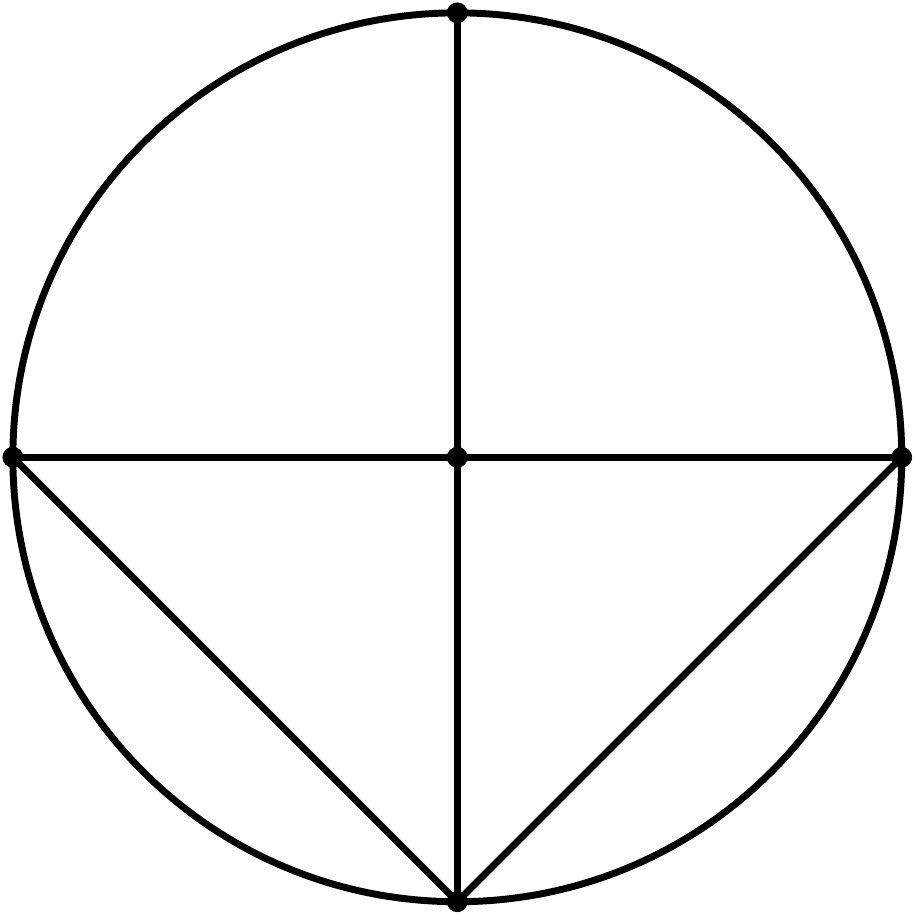}
                \caption{The contraction of edge $e$ from $G$}
        \end{subfigure}
        \caption{An example of a deletion and a contraction}\label{deletecontract}
\end{figure}

\subsection{Critical Graphs}

In this section, we introduce critical graphs, a particular class of circular planar graphs.

\begin{defn}
Let $G$ be a circular planar graph. $G$ is said to be \textbf{critical} if, for any removal of an edge via deletion or contraction, there exists a circular pair $(P;Q)$ for which $P$ is connected to $Q$ through $G$ before the edge removal, but not afterward.
\end{defn}

\begin{theorem}[{\cite[Th\'{e}or\`{e}me 2]{french}}]\label{critrep}
Every equivalence class of circular planar graphs has a critical representative.
\end{theorem}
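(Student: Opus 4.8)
The plan is to take a representative of the equivalence class with the fewest edges and show that minimality forces it to be critical. Concretely, among all circular planar graphs equivalent to the given one, choose $G$ with $|E(G)|$ minimal; such a $G$ exists since the number of edges is a non-negative integer. I claim $G$ is critical. Suppose not: then some edge $e$ admits a removal (deletion or contraction) producing a graph $G'$ with $\pi(G')=\pi(G)$, i.e.\ the set of connected circular pairs is unchanged. Note that deletion preserves the boundary vertices, and any legal contraction (of an edge with at most one boundary endpoint) also preserves the order, while both operations decrease the edge count by one. Hence if I can show $G'\sim G$, then $G'$ is a representative of the same class with strictly fewer edges, contradicting minimality; this contradiction proves $G$ critical.

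The heart of the argument is therefore the claim that \emph{an edge removal leaving $\pi$ unchanged is an equivalence}. To prove it I would pass to response matrices. Fix conductances on the edges of $G'$ and reinstate $e$ with conductance $t$, writing $M_{t}$ for the resulting response matrix of $G$. Deletion of $e$ is the limit $t\to 0^{+}$ (no current flows) and contraction of $e$ is the limit $t\to\infty$ (a short circuit identifying the endpoints), so in the relevant limit $M_{t}$ converges to a response matrix $M_{0}$ of $G'$. For every $t>0$ the network on $G$ realizes exactly the connections of $G$, since connections depend only on which edges are present and not on the conductance values; so by Theorem \ref{circularminorspositive}(b) the positive circular minors of $M_{t}$ are precisely those indexed by $\pi(G)$, and likewise the positive minors of $M_{0}$ are those indexed by $\pi(G')$. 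Because $\pi(G)=\pi(G')$, the entire family $\{M_{t}\}_{t\ge 0}$ carries a single pattern of positive circular minors.

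The main obstacle is upgrading this from a statement about limits to an honest common response matrix: the argument so far only shows that each response matrix of $G'$ lies in the closure of those of $G$. To close the gap I would invoke the cell structure of response matrices. By Theorem \ref{circularminorspositive}(a) the response matrices are exactly the symmetric matrices with vanishing row and column sums and non-negative circular minors, and the subset with a \emph{prescribed} pattern of strictly positive minors forms a single cell; the key input is that this cell is determined by the pattern alone and is realized in full by the conductances on any graph whose connection set equals that pattern, which is the content of the parametrization result \cite[Theorem 4]{curtis} (stated later as Theorem \ref{diffeo}). Granting this, the sets of response matrices realized by $G$ and by $G'$ both equal the cell attached to $\pi(G)=\pi(G')$, so they coincide and in particular share a response matrix, giving $G\sim G'$. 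I expect the delicate point to be precisely the realization of this cell by a possibly \emph{non-critical} graph such as $G'$ — where the parametrization need not be the clean diffeomorphism available for critical graphs — and I would handle it by combining the closure containment above with the openness of each realized cell inside its ambient pattern manifold.
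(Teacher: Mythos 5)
You are attempting to prove a statement that the paper itself does not prove: Theorem \ref{critrep} is quoted from de Verdi\`{e}re--Gitler--Vertigan \cite{french} (Th\'{e}or\`{e}me 2). The standard proof there, and in Curtis--Ingerman--Morrow \cite{curtis}, is combinatorial rather than analytic: pass to the medial graph $\mc{M}(G)$; if it has a closed geodesic or a lens, apply motions to empty an innermost lens (the same device as \cite[Lemma 6.2]{curtis}, reused in Lemma \ref{constructK} of this paper), resolve it by the corresponding local equivalence (series/parallel replacement, or loop/spike removal), and induct on the number of crossings; once the medial graph is lensless, Theorem \ref{criticalmedial} says the graph is critical. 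Your proposal instead runs through response matrices, and it has a genuine gap exactly at the point you flag.

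Your reduction of the theorem to the claim that \emph{an edge removal leaving $\pi$ unchanged is an equivalence} is a reasonable plan, but your justification of that claim does not go through. In your contradiction setup, $G$ is assumed \emph{non}-critical and $G'$ is not known to be critical, so Theorem \ref{diffeo} applies to neither graph: it parametrizes $\Omega(\pi)$ by conductances only for critical graphs. The strengthened statement you invoke --- that the cell is ``realized in full by the conductances on any graph whose connection set equals that pattern'' --- is not the content of \cite[Theorem 4]{curtis}; for arbitrary graphs, $\Omega(G')=\Omega(\pi(G'))$ is normally \emph{deduced from} the existence of critical representatives, the very theorem you are proving. The same circularity afflicts any appeal to Proposition \ref{equalconnectionset} or to the implication (3)$\Rightarrow$(1) of Theorem \ref{criticalcharacterization} (the latter is verbatim your key claim, and the paper proves both downstream of Theorem \ref{critrep}). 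What your limit argument honestly establishes is only $\Omega(G')\subset\overline{\Omega(G)}$ together with the equality of positivity patterns, and that does not yield a common response matrix: closure plus a shared pattern is compatible with $\Omega(G)\cap\Omega(G')=\emptyset$. Your proposed repair --- ``openness of each realized cell inside its ambient pattern manifold'' --- is unproven and is essentially equivalent to the missing surjectivity: for non-critical $G$ the map $r_{G}$ is a smooth map from a cube of strictly larger dimension than the cell, so its image carries no a priori openness (invariance of domain does not apply), and even the connectedness of the ambient cell $\Omega(\pi(G))$ is unknown at this stage without a critical graph realizing $\pi(G)$. To make your outline correct within this paper's toolkit, replace the analytic step by the medial-graph induction sketched above, which needs only motions, lens resolution, and Theorem \ref{criticalmedial}.
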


\begin{theorem}[{\cite[Theorem 1]{curtis}}]\label{equivalentconnections}
Suppose $G_{1},G_{2}$ are critical. Then, $G_{1}$ and $G_{2}$ are Y-$\Delta$ equivalent (that is, related by a sequence of Y-$\Delta$ transformations) if and only if $\pi(G_{1})=\pi(G_{2})$.
\end{theorem}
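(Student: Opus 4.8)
The plan is to treat the two implications separately: the forward direction follows quickly from the results already assembled, while the reverse direction is the substantive part and calls for the machinery of medial graphs.

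For the forward direction, suppose $G_{1}$ and $G_{2}$ are Y-$\Delta$ equivalent. Each Y-$\Delta$ transformation is a local equivalence, so by Theorem \ref{localeqenough} we have $G_{1}\sim G_{2}$; that is, there are conductances $\gamma_{1},\gamma_{2}$ making $(G_{1},\gamma_{1})$ and $(G_{2},\gamma_{2})$ realize a common response matrix $M$. I would first note that $\pi(\Gamma)$ depends only on the underlying graph and not on any conductances, since by Definition \ref{connection} a connection is a purely combinatorial statement about vertex-disjoint paths. By Theorem \ref{circularminorspositive}(b), for each network the set of circular pairs $(P;Q)$ whose circular minor is positive is exactly $\pi(G_{i})$; as both networks realize the same $M$, these two sets coincide, so $\pi(G_{1})=\pi(G_{2})$. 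This direction uses neither criticality nor the medial graph.

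For the reverse direction I would pass to medial graphs. To each critical graph $G$ one associates its medial graph, whose geodesics (maximal ``straight'' strands) join points on the boundary circle and thereby induce a fixed-point-free pairing of the boundary medial points. The two facts I would establish or invoke from the medial-graph theory of \cite{curtis, french} are: (i) for critical $G$ the medial graph is \emph{lens-free} --- no geodesic is closed, no geodesic crosses itself, and no two geodesics cross more than once --- and the connection set $\pi(G)$ is determined by, and conversely recovers, the boundary pairing of geodesics; and (ii) any two lens-free geodesic arrangements in the disk inducing the same boundary pairing are related by a sequence of local ``triangle'' moves, in which one strand is slid across the crossing of two others. Granting (i), the hypothesis $\pi(G_{1})=\pi(G_{2})$ forces the two medial graphs to share a boundary pairing; granting (ii), their medial graphs are then joined by a chain of triangle moves. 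Finally I would verify that a triangle move on the medial graph is exactly the image of a Y-$\Delta$ transformation on the original graph: the three geodesic arcs bounding the triangle are dual to the three edges of a $Y$ or a $\Delta$, and sliding the strand across interchanges the two local pictures. Translating the chain of triangle moves back through this dictionary exhibits $G_{1}$ and $G_{2}$ as Y-$\Delta$ equivalent.

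The main obstacle is step (ii): that lens-free arrangements realizing the same boundary pairing are connected by triangle moves. This is the heart of the matter and is essentially a statement about pseudoline/wiring arrangements in the disk, the lens-free condition playing the role of ``reducedness'' (analogous to reduced wiring diagrams for a permutation being related by braid moves). The delicate point is to show that whenever two such arrangements differ, a triangle move is actually \emph{available} --- rather than only a lens creation, which criticality forbids --- typically via an innermost-triangle or minimal-crossing argument. Establishing (i), namely that $\pi(G)$ is equivalent data to the boundary pairing for critical $G$, is the other careful point, requiring an analysis of precisely which families of vertex-disjoint paths the geodesic arrangement permits.
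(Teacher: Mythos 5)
The paper does not actually prove this statement: it is imported wholesale as \cite[Theorem 1]{curtis}, so there is no in-paper argument to compare yours against. What can be said is that your outline reproduces both the machinery the paper sets up in \S 2.4 and the structure of the original proof in \cite{curtis}: your item (i) is Theorem \ref{criticalmedial} together with the fact that, for critical graphs, the connection set $\pi(G)$ and the boundary pairing of geodesics determine one another; your item (ii) is precisely Proposition \ref{motionsenough}, which the paper itself quotes as ``implicit in \cite[Theorem 7.2]{curtis}''; and your triangle-move/Y-$\Delta$ dictionary is the motion correspondence described in \S 2.4. Your forward direction is complete and non-circular as written: it uses only Theorem \ref{localeqenough} and Theorem \ref{circularminorspositive}(b), neither of which rests on the present statement. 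It is in fact a mild alternative to the purely combinatorial route (checking directly that a single Y-$\Delta$ move preserves vertex-disjoint path systems), and it is worth noting the paper's Proposition \ref{equalconnectionset} argues in the opposite direction, deducing Y-$\Delta$ invariance of $\pi$ \emph{from} this theorem, so your response-matrix argument dodges that circularity correctly.

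As a standalone proof, however, the reverse direction has exactly the two gaps you flag and does not fill them. For (i), the direction you need is that equal connection sets force equal boundary pairings, and nothing in the sketch establishes this; in \cite{curtis} it requires a genuine analysis (recovering where each geodesic terminates from the maximal circular pairs admitting connections), not merely the observation that criticality gives lenslessness. For (ii), the analogy with reduced words and braid moves is suggestive but not a proof: the statement for lensless arrangements in a disk must be established separately, e.g.\ by the innermost-triangle argument you gesture at, and one must also verify that the required triangle move never forces the creation of a lens. Since the paper treats both pillars as black boxes from \cite{curtis}, your proposal sits at parity with the paper's treatment --- the right skeleton, with the same two load-bearing results invoked rather than proven.
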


\begin{proposition}\label{equalconnectionset}
Let $G_{1},G_{2}$ be arbitrary circular planar graphs. Then, $G_{1}\sim G_{2}$ if and only if $\pi(G_{1})=\pi(G_{2})$.
\end{proposition}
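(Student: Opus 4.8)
The plan is to prove both directions by reducing to the critical case, where Theorem \ref{equivalentconnections} can be applied directly. Throughout I assume, as the definition of $\sim$ for graphs requires, that $G_1$ and $G_2$ carry the same (labeled) boundary vertices, so that their connection sets are comparable.

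For the forward direction, suppose $G_1 \sim G_2$, and choose conductances $\gamma_1, \gamma_2$ so that $(\Gamma_1, \gamma_1)$ and $(\Gamma_2, \gamma_2)$ share a common response matrix $M$. The key observation is that the connection set $\pi(\Gamma)$ is a purely combinatorial invariant of the underlying graph: by Definition \ref{connection} it refers only to vertex-disjoint paths in $\Gamma$, with no mention of the conductance map. By Theorem \ref{circularminorspositive}(b), applied to each of the two networks, the circular pairs $(P;Q)$ admitting a connection are exactly those for which the minor $M(P;Q)$ is positive. Since both networks produce the same $M$, both connection sets equal $\{(P;Q) : M(P;Q) > 0\}$, and hence $\pi(G_1) = \pi(G_2)$.

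For the reverse direction, I would first pass to critical representatives. By Theorem \ref{critrep}, the equivalence classes of $G_1$ and $G_2$ contain critical graphs $G_1'$ and $G_2'$, with $G_1 \sim G_1'$ and $G_2 \sim G_2'$. Applying the forward direction just established gives $\pi(G_i) = \pi(G_i')$ for $i = 1,2$. Thus, under the hypothesis $\pi(G_1) = \pi(G_2)$, I obtain $\pi(G_1') = \pi(G_2')$ for two critical graphs. Theorem \ref{equivalentconnections} then yields that $G_1'$ and $G_2'$ are Y-$\Delta$ equivalent; since Y-$\Delta$ transformations are among the local equivalences, this means $G_1' \sim G_2'$. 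Chaining the equivalences gives $G_1 \sim G_1' \sim G_2' \sim G_2$, as desired.

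I do not expect a serious obstacle here, since the substantive content is imported from the cited theorems: Theorem \ref{circularminorspositive}(b) translates the analytic data (positive minors of $M$) into combinatorial data (connections), while Theorems \ref{critrep} and \ref{equivalentconnections} together handle the passage to, and classification of, critical graphs. The one point that genuinely must be checked — and the crux that makes the argument work — is that $\pi$ is conductance-independent, so that the forward direction may legitimately be reused inside the reverse direction when relating $G_1, G_2$ to their critical representatives. This is exactly why I record the observation about Definition \ref{connection} at the outset.
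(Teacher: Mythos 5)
Your proof is correct, and your reverse direction (pass to critical representatives via Theorem \ref{critrep}, apply the forward direction to each pair $G_i \sim G_i'$, then invoke Theorem \ref{equivalentconnections}) is exactly the paper's argument. Your forward direction, however, takes a genuinely different route. The paper argues combinatorially: it invokes Theorem \ref{localeqenough} to write the equivalence $G_1 \sim G_2$ as a chain of local equivalences, and then checks that each local equivalence preserves $\pi(-)$ --- using Theorem \ref{equivalentconnections} for the Y-$\Delta$ move and direct verification for self-loop/spike removal and series/parallel replacement. You instead argue analytically: choosing conductances that realize a common response matrix $M$, you apply Theorem \ref{circularminorspositive}(b) to each network separately, so that both connection sets are forced to equal $\{(P;Q) : \det M(P;Q) > 0\}$. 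Your version is shorter and avoids the implicit case-checking over the list of local equivalences, deriving the forward implication as an immediate corollary of the Curtis--Ingerman--Morrow minor-positivity theorem; and your observation that $\pi$ is conductance-independent (so the forward direction can legitimately be recycled in the reverse direction) is the right point to flag, since it is what makes the two halves of the argument compose. The paper's version buys something different: it stays within the combinatorial framework of local moves, which is the machinery the surrounding sections (medial graphs, motions, breakings) actually develop and reuse, whereas your route leans on the analytic characterization of response matrices. Both are complete proofs resting on cited results of comparable depth (dVGV's Th\'{e}or\`{e}me 4 for the paper, CIM's Theorem 4.2 for you).
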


\begin{proof}
By Theorem \ref{localeqenough}, if $G_{1}\sim G_{2}$, then $G_{1}$ and $G_{2}$ are related by a sequence of local equivalences. All local equivalences preserve $\pi(-)$; indeed, we have the claim for Y-$\Delta$ transformations by Theorem \ref{equivalentconnections}, and it is easy to check for all other local equivalences. It follows that $\pi(G_{1})=\pi(G_{2})$. 

In the other direction, by Theorem \ref{critrep}, there exist critical graphs $H_{1},H_{2}$ such that $G_{1}\sim H_{1}$ and $G_{2}\sim H_{2}$. By similar logic from the previous paragraph, we have $\pi(H_{1})=\pi(G_{1})=\pi(G_{2})=\pi(H_{2})$, and thus, by Theorem \ref{equivalentconnections}, $H_{1}\sim H_{2}$. It follows that $G_{1}\sim G_{2}$, so we are done.
\end{proof}

\begin{defn}
Fix a set $B$ of $n$ boundary vertices on a disk $D$. For any set of circular minors $\pi$, let $\Omega(\pi)$ denote the set of response matrices whose, with the set of positive minors being exactly those corresponding to the elements $\pi$. We will refer to the sets $\Omega(\pi)$ as \textbf{cells}, in light of the theorem that follows.
\end{defn}

\begin{theorem}[{\cite[Theorem 4]{curtis}}]\label{diffeo}
Suppose that $G$ is critical and has $N$ edges. Put $\pi=\pi(G)$. Then, the map $r_{G}:\mathbb{R}_{>0}^{N}\rightarrow\Omega(\pi)$, taking the conductances on the edges of $G$ to the resulting response matrix, is a diffeomorphism.
\end{theorem}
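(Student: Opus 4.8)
The plan is to establish three things: that $r_{G}$ is smooth with image contained in $\Omega(\pi)$, that it is a bijection onto $\Omega(\pi)$, and that its inverse is smooth. The most economical route handles the last two simultaneously by exhibiting an explicit smooth two-sided inverse built from a conductance-recovery procedure, proceeding by induction on the number of edges $N$.

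First I would check smoothness and the target. Writing $L$ for the weighted graph Laplacian (Kirchhoff matrix) of $(G,\gamma)$, partitioned into boundary and interior blocks, the response matrix is the Schur complement $M = L_{BB} - L_{BI}\, L_{II}^{-1}\, L_{IB}$. The interior block $L_{II}$ is positive definite for every $\gamma\in\mathbb{R}_{>0}^{N}$ (the discrete Dirichlet problem has a unique solution), so $\det L_{II}>0$ and each entry of $M$ is a rational function of the conductances with nonvanishing denominator; hence $r_{G}$ is smooth. Since the existence of connections in $G$ depends only on the graph and not on $\gamma$, Theorem \ref{circularminorspositive}(b) shows that the set of strictly positive circular minors of $r_{G}(\gamma)$ is exactly $\pi(G)=\pi$, so $r_{G}(\gamma)\in\Omega(\pi)$ for all $\gamma$. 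This produces a well-defined smooth map $r_{G}:\mathbb{R}_{>0}^{N}\to\Omega(\pi)$.

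The heart of the argument is a recovery step, supported by three lemmas. A structural lemma asserts that every critical graph with at least one edge contains a boundary edge (joining two boundary vertices) or a boundary spike (an edge from a boundary vertex to an interior vertex); I would prove this by a topological argument on the medial graph, the tool the paper emphasizes. Fixing such an edge $e$ of $G$, a recovery lemma would express $\gamma(e)$ as a rational function of the entries of $M=r_{G}(\gamma)$ whose denominator is one of the circular minors indexed by $\pi$; this makes the function smooth and, crucially, strictly positive on all of $\Omega(\pi)$, the positivity resting on the minor-positivity that defines $\Omega(\pi)$. Finally, removing $e$ — deleting a boundary edge, or contracting a boundary spike so its interior endpoint becomes a boundary vertex — yields a graph $G'$ with $N-1$ edges; a reduction lemma would show $G'$ is again critical, identify $\pi(G')=\pi'$ combinatorially, and express the response matrix $M'$ of the reduced network as a smooth function of $M$ landing in $\Omega(\pi')$.

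With these lemmas, induction closes the proof. The base case $N=0$ is the empty graph with $M=0$. For the inductive step, the inductive hypothesis makes $r_{G'}:\mathbb{R}_{>0}^{N-1}\to\Omega(\pi')$ a diffeomorphism; composing the recovery of $\gamma(e)$, the reduction $M\mapsto M'$, and $r_{G'}^{-1}$ produces a smooth map $\Omega(\pi)\to\mathbb{R}_{>0}^{N}$ that I would verify is a two-sided inverse of $r_{G}$, using that recovery and reduction undo the physical effect of reinstating $e$ with conductance $\gamma(e)$. This single construction yields injectivity, surjectivity, and smoothness of the inverse at once, so $r_{G}$ is a diffeomorphism. (An alternative ending would prove injectivity separately and invoke the inverse function theorem after checking the Jacobian of $r_{G}$ is everywhere nonsingular, but the recovery approach delivers the smooth inverse directly.) I expect the main obstacle to be exactly the recovery package: establishing that a critical graph always possesses a boundary spike or boundary edge, and pinning down explicit recovery and reduction formulas that are smooth and preserve positivity on $\Omega(\pi)$. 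This is essentially the Curtis--Ingerman--Morrow recoverability theorem, and the positivity of the recovered conductances is the delicate point.
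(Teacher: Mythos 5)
The paper gives no proof of Theorem \ref{diffeo}: it is imported verbatim from Curtis--Ingerman--Morrow \cite[Theorem 4]{curtis}, and your outline is essentially their argument --- smoothness via the Schur complement of the Kirchhoff matrix, existence of a boundary edge or boundary spike in any critical graph (proved through the medial graph), rational recovery of its conductance with a circular minor as denominator, and induction on the reduced (still critical) network. You also correctly sidestep the one circularity trap in this paper's logical structure by invoking only part (b) of Theorem \ref{circularminorspositive}, since part (a) is itself deduced from Theorem \ref{diffeo} here, so your plan matches the cited proof.
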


It follows that the space of response matrices for electrical networks of order $n$ is the disjoint union of the cells $\Omega(\pi)$, some of which are empty. The non-empty cells $\Omega(\pi)$ are those which correspond to critical graphs $G$ with $\pi(G)=\pi$. We will describe how these cells are attached to each other in Proposition \ref{closure}.

\begin{rmk}\label{omegapi}
Later, we will prefer (e.g. in Proposition \ref{closure}) to index these cells by their underlying (equivalence classes of) circular planar graphs, referring to them as $\Omega(G)$. Thus, $\Omega(G)$ denotes the set response matrices for conductances on $G$.
\end{rmk}

Let us now characterize critical graphs in more conceptual ways. We quote a fourth characterization using medial graphs in Theorem \ref{criticalmedial}.

\begin{theorem}\label{criticalcharacterization}
Let $(\Gamma,\gamma)$ be an electrical network. The following are equivalent:
\begin{enumerate}
\item[(1)] $\Gamma$ is critical.
\item[(2)] Given the response matrix $M$ of $(\Gamma,\gamma)$, $\gamma$ can uniquely be recovered from $M$ and $\Gamma$.
\item[(3)] $\Gamma$ has the minimal number of edges among elements of its equivalence class.
\end{enumerate}
\end{theorem}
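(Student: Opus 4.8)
The plan is to prove the three equivalences by establishing $(1) \Leftrightarrow (3)$ first, then $(1) \Rightarrow (2)$, and finally $(2) \Rightarrow (1)$ by contraposition. Throughout I would lean on Proposition \ref{equalconnectionset} to translate between equivalence of graphs and equality of connection sets, on Theorem \ref{equivalentconnections} to control critical graphs with a given connection set, and on Theorem \ref{diffeo} to supply the dimension of each cell $\Omega(\pi)$.

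For $(3) \Rightarrow (1)$ I argue the contrapositive: if $\Gamma$ is not critical, some edge can be deleted or contracted without changing $\pi(\Gamma)$; the resulting graph $\Gamma'$ satisfies $\pi(\Gamma') = \pi(\Gamma)$, hence $\Gamma' \sim \Gamma$ by Proposition \ref{equalconnectionset}, and has strictly fewer edges, so $\Gamma$ is not edge-minimal. For $(1) \Rightarrow (3)$ the crux is that any two critical graphs with the same connection set have the same number of edges: by Theorem \ref{equivalentconnections} such graphs are related by Y-$\Delta$ transformations, and each such transformation replaces three edges by three edges, leaving the edge count unchanged. Granting this, given critical $\Gamma$ and any equivalent graph $\Gamma''$, I repeatedly remove edges that preserve the connection set until reaching a critical graph $\Gamma'''$; then $|E(\Gamma''')| \le |E(\Gamma'')|$, while $\Gamma''' \sim \Gamma$ forces $\pi(\Gamma''') = \pi(\Gamma)$ and hence $|E(\Gamma)| = |E(\Gamma''')| \le |E(\Gamma'')|$, so $\Gamma$ is minimal. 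One must check that this reduction terminates (the edge count strictly decreases) and that deletion and contraction preserve the number of boundary vertices, so all graphs remain in the order-$n$ class.

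The implication $(1) \Rightarrow (2)$ is immediate from Theorem \ref{diffeo}: for critical $\Gamma$ the map $r_\Gamma$ is a diffeomorphism onto $\Omega(\pi(\Gamma))$, in particular injective, so a response matrix determines its conductances uniquely. For $(2) \Rightarrow (1)$ I again contrapose. If $\Gamma$ has $N$ edges and is not critical, then by $(1) \Leftrightarrow (3)$ it is not minimal, so a critical representative $\Gamma_c$ of its class has $N_c < N$ edges and $\Omega(\pi(\Gamma)) \cong \mathbb{R}_{>0}^{N_c}$ by Theorem \ref{diffeo}. By Theorem \ref{circularminorspositive}(b) every response matrix $r_\Gamma(\gamma)$ has positive-minor set exactly $\pi(\Gamma)$, so $r_\Gamma$ maps $\mathbb{R}_{>0}^N$ into $\Omega(\pi(\Gamma))$; post-composing with the diffeomorphism $r_{\Gamma_c}^{-1}$ gives a smooth map $\mathbb{R}_{>0}^N \to \mathbb{R}_{>0}^{N_c}$. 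Since $N > N_c$, invariance of domain prevents this map, and thus $r_\Gamma$, from being injective, so two distinct conductances on $\Gamma$ share a response matrix, contradicting $(2)$.

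The step I expect to be the main obstacle is $(2) \Rightarrow (1)$, where one must exclude the a priori possibility that a non-critical graph nevertheless recovers its conductances uniquely. The resolution hinges on three points that must be assembled correctly: identifying $\dim \Omega(\pi(\Gamma)) = N_c$ via Theorem \ref{diffeo}, verifying through Theorem \ref{circularminorspositive}(b) that $r_\Gamma$ genuinely lands in the lower-dimensional cell $\Omega(\pi(\Gamma))$, and invoking the topological fact that no smooth (indeed continuous) injection exists from a nonempty open subset of $\mathbb{R}^N$ into $\mathbb{R}^{N_c}$ when $N > N_c$.
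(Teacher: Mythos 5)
Your proof of the equivalence $(1)\Leftrightarrow(3)$ is essentially the paper's: the same contrapositive argument for $(3)\Rightarrow(1)$ via Proposition \ref{equalconnectionset}, and the same Y-$\Delta$ edge-count argument via Theorem \ref{equivalentconnections} for $(1)\Rightarrow(3)$; you merely make explicit the terminating reduction to a critical representative that the paper leaves implicit. Where you genuinely diverge is $(1)\Leftrightarrow(2)$: the paper disposes of this entirely by citing \cite[Lemma 13.2]{curtis}, whereas you derive $(1)\Rightarrow(2)$ from the injectivity of $r_{\Gamma}$ in Theorem \ref{diffeo}, and $(2)\Rightarrow(1)$ by a dimension count --- using Theorem \ref{circularminorspositive}(b) to check that $r_{\Gamma}$ lands in $\Omega(\pi(\Gamma))$, Theorem \ref{diffeo} applied to a critical representative (which exists by Theorem \ref{critrep} and has $N_{c}<N$ edges by the already-proved $(1)\Leftrightarrow(3)$) to identify $\Omega(\pi(\Gamma))\cong\mathbb{R}_{>0}^{N_{c}}$, and invariance of domain to forbid a continuous injection from an open subset of $\mathbb{R}^{N}$ into $\mathbb{R}^{N_{c}}$. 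This is correct and carefully assembled, with no circularity since $(1)\Leftrightarrow(3)$ is established first; it buys self-containedness (modulo the quoted structure theorems) at the modest cost of one topological input, where the paper's citation buys brevity. One caveat worth stating explicitly: your argument shows $r_{\Gamma}$ fails to be injective \emph{somewhere}, so it refutes condition (2) read as unique recoverability of the conductances for every $\gamma$ on $\Gamma$ --- the reading in \cite{curtis}, under which (2) is a property of $\Gamma$ alone --- rather than showing the fiber through the one given $\gamma$ is non-trivial; since (1) and (3) do not depend on $\gamma$, that global reading is the only sensible one, but your write-up should say so.
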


\begin{proof}
By \cite[Lemma 13.2]{curtis}, (1) and (2) are equivalent. We now prove that (3) implies (1). Suppose for sake of contradiction $\Gamma$ has the minimal number of edges among elements of its equivalence class, but is not critical. Then, there exists some edge that may be contracted or deleted to give $\Gamma'$, such that $\pi(\Gamma)=\pi(\Gamma')$. Then, by Proposition \ref{equalconnectionset}, we have $\Gamma'\sim\Gamma$, contradicting the minimality of the number of edges of $\Gamma$.

Finally, to see that (1) implies (3), suppose for sake of contradiction that $\Gamma$ is critical and equivalent to a graph $\Gamma'$ with a strictly fewer edges. $\Gamma'$ cannot be critical, or else $\Gamma$ and $\Gamma'$ would be Y-$\Delta$ equivalent and thus have an equal number of edges. However, if $\Gamma'$ is not critical, we also have a contradiction by the previous paragraph. The result follows.
\end{proof}

\subsection{Medial Graphs}

One of our main tools in studying circular planar graphs (and thus, electrical networks) will be their medial graphs. In a sense, medial graphs are the dual object to circular planar graphs. See Figure \ref{medialgeodesics} for examples. 

Let $G$ be a circular planar graph with $n$ boundary vertices; color all vertices of $G$ black, for convenience. Then, for each boundary vertex, add two red vertices to the boundary circle, one on either side, as well as a red vertex on each edge of $G$. We then construct the \textbf{medial graph} of $G$, denoted $\mc{M}(G)$, as follows. 

Take the set of red vertices to be the vertex set of $\mc{M}(G)$. Two red non-boundary vertices in $\mc{M}(G)$ are connected by an edge if and only if their associated edges share a vertex and border the same face. Then, the red boundary vertices are each connected to exactly one other red vertex: if the red boundary vertex $r$ lies clockwise from its associated black boundary vertex $b$, then $r$ is connected to the red vertex associated to the first edge in clockwise order around $b$ after the arc $rb$. Similarly, if $r$ lies counterclockwise from $b$, we connect $r$ to the red vertex associated to the first edge in counterclockwise order around $b$ after the arc $rb$. Note that if no edges of $G$ are incident at $b$, then the two red vertices associated to $b$ are connected by an edge of $\mc{M}(G)$.

\begin{figure}
\begin{center}
\includegraphics[scale=0.4]{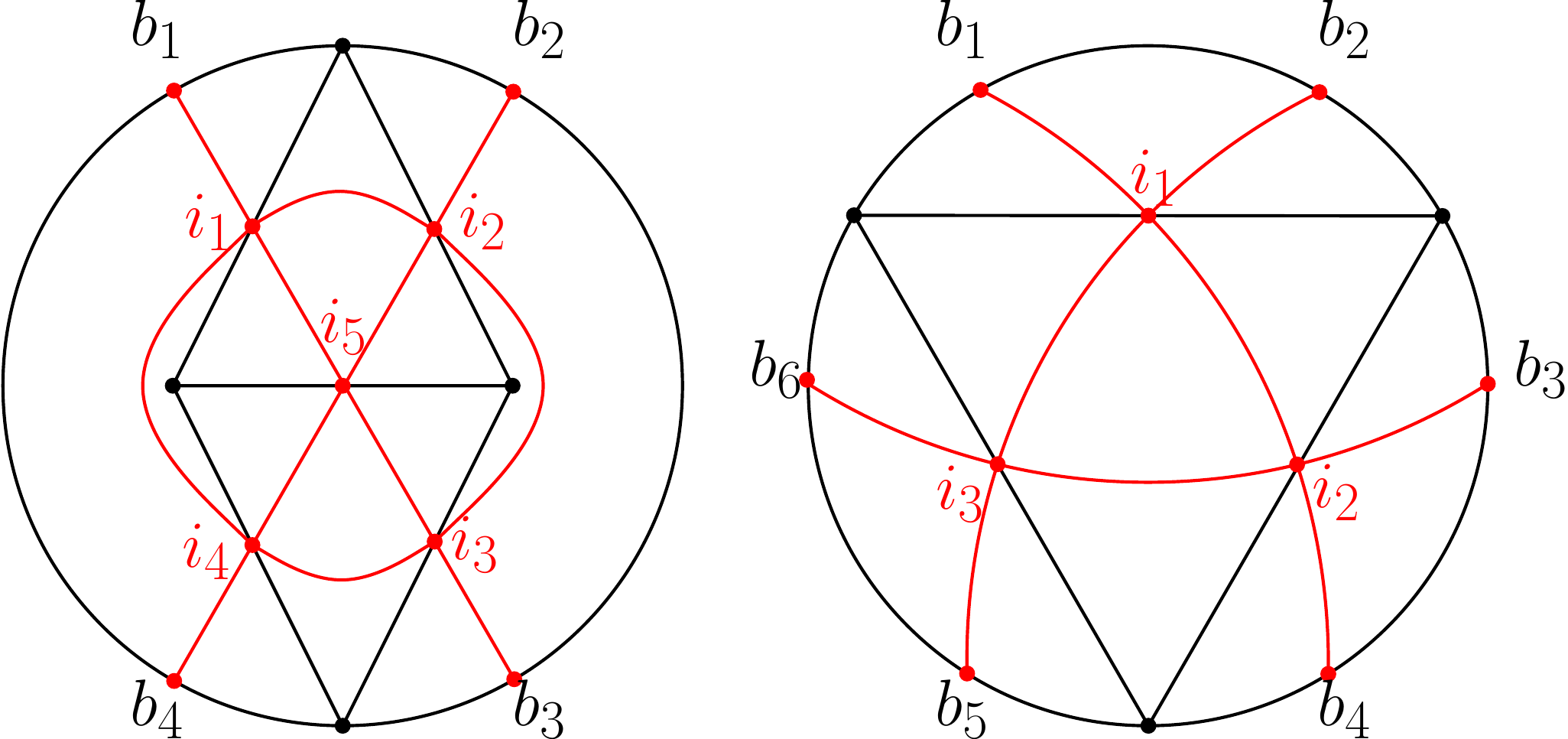}
\end{center}
\caption{Medial Graphs}\label{medialgeodesics}
\end{figure}

We will refer to the red vertices on the boundary circle as \textbf{medial boundary vertices}, as to distinguish them from the black \textbf{boundary vertices}, a term we will reserve for the boundary vertices of the original circular planar graph (electrical network). The \textbf{order} of a medial graph is the order of its underlying electrical network.

Note that the medial boundary vertices of $\mc{M}(G)$ have degree 1, and all other vertices have degree 4. Thus, we may form \textbf{geodesics} in $\mc{M}(G)$ in the following way. Starting at each medial boundary vertex, draw a path $e_{1}e_{2}\cdots e_{n}$ (labeled by its edges) so that if the edge $e_{i}$ ends at the non-medial boundary vertex $v$, the edge $e_{i+1}$ is taken to be the edge with endpoint $v$ such that the edges $e_{i}$ and $e_{i+1}$ separate the other two edges incident at $v$. The geodesic ends when it reaches a second boundary vertex. The remaining geodesics are constructed in a similar way, but do not start and end at boundary vertices: instead, they must be finite cycles inside the circle.

For example, in Figure \ref{medialgeodesics}, we have three geodesics in the right hand diagram: $b_{1}b_{4}$, $b_{2}b_{5}$, and $b_{3}b_{6}$, where here we label the geodesics by their vertices. In the left hand diagram, we have the geodesics $b_{1}b_{3}$, $b_{2}b_{4}$, and $i_{1}i_{2}i_{3}i_{4}$.

\begin{defn}Two geodesics are said to form a \textbf{lens} if they intersect at distinct $p_{1}$ and $p_{2}$, in such a way that they do not intersect between $p_{1}$ and $p_{2}$. A medial graph is said to be \textbf{lensless} if all geodesics connect two medial boundary vertices (that is, no geodesics are cycles), and no two geodesics form a lens.
\end{defn}

The local equivalences of electrical networks may easily be translated into operations on their medial graphs. Most importantly, Y-$\Delta$ transformations become \textbf{motions}, as shown in \ref{motion}, and replacing series or parallel edges with a single edge both correspond to \textbf{resolution} of lenses, as shown in Figure \ref{lensresolution}. Note, however, that a lens may only be resolved if no other geodesics pass through the lens. Defining two medial graphs to be \textbf{equivalent} if their underlying circular planar graphs are equivalent, we obtain an analogue of Theorem \ref{localeqenough}.

\begin{figure}
\begin{center}
\includegraphics[scale=0.3]{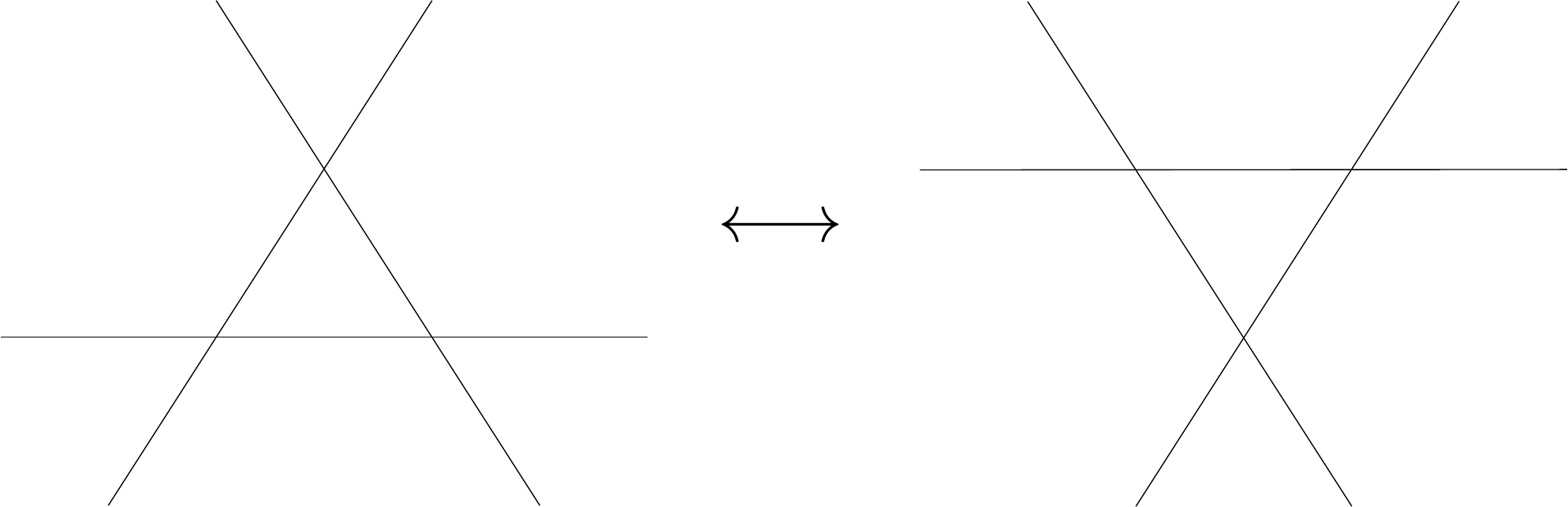}
\caption{Motions}\label{motion}
\end{center}
\end{figure}

\begin{figure}
\begin{center}
\includegraphics[scale=0.6]{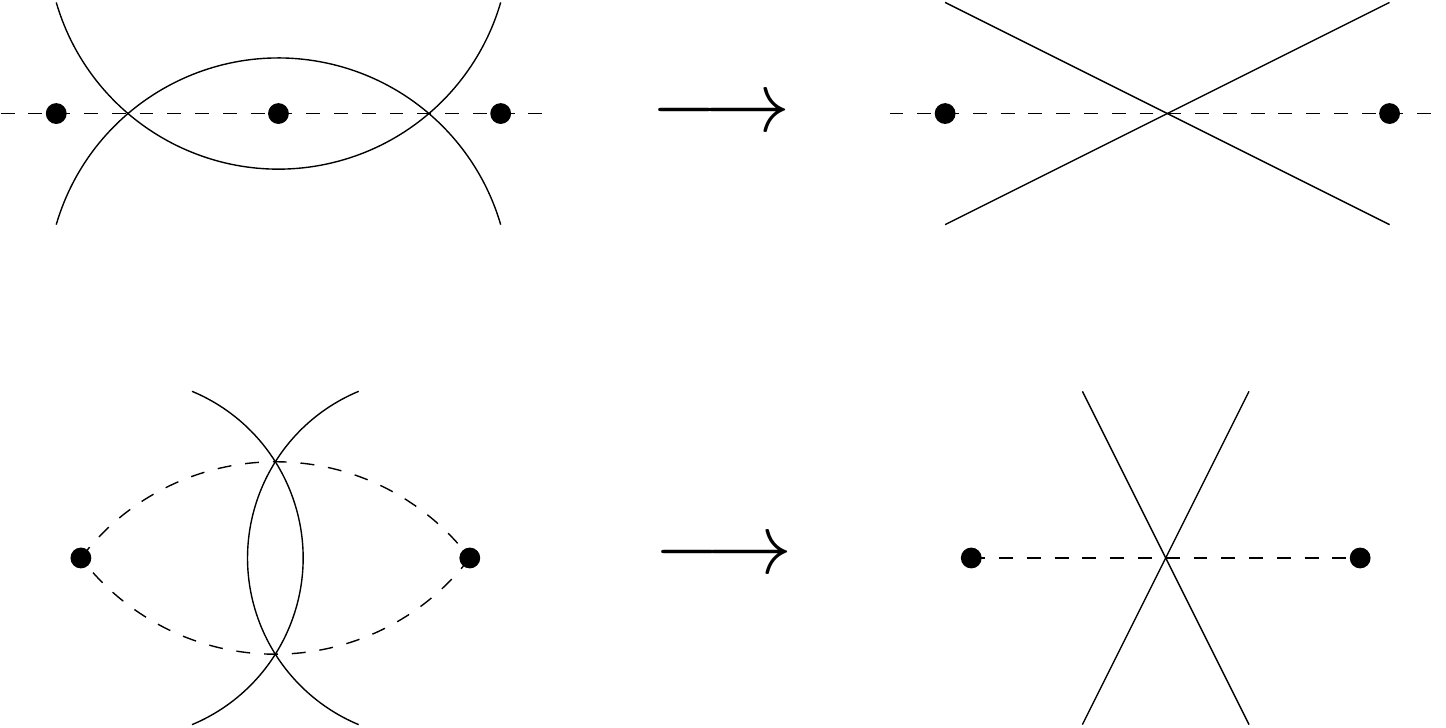}
\caption{Resolution of Lenses}\label{lensresolution}
\end{center}
\end{figure}

The power of medial graphs lies in the following theorem:

\begin{theorem}[{\cite[Lemma 13.1]{curtis}}]\label{criticalmedial}
$G$ is critical if and only if $\mc{M}(G)$ is lensless.
\end{theorem}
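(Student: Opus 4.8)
The plan is to reduce the statement to the characterization of criticality by minimality of edge count (Theorem \ref{criticalcharacterization}(3)), and then to reinterpret ``minimal number of edges'' on the medial side as ``minimal number of geodesic crossings.''

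First I would record the basic dictionary between $G$ and $\mc{M}(G)$. Since we place exactly one red vertex on each edge of $G$, and these red vertices are precisely the degree-$4$ vertices of $\mc{M}(G)$, the number of edges of $G$ equals the number of interior crossings of geodesics in $\mc{M}(G)$. Next, the geodesics of $\mc{M}(G)$ induce a pairing (a perfect matching) $\tau(G)$ of the $2n$ medial boundary vertices. I would check that every local equivalence preserves $\tau(G)$: motions (from $Y$-$\Delta$ transformations, Figure \ref{motion}) and lens resolutions (from series/parallel replacements, Figure \ref{lensresolution}) visibly fix the endpoints of every geodesic, as do self-loop and spike removals. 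Since local equivalences generate equivalence (Theorem \ref{localeqenough}), it follows that $\tau(G)$ is an invariant of the equivalence class of $G$: equivalent graphs have the same medial pairing and hence an identical count of boundary-to-boundary geodesics.

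With this dictionary in hand, Theorem \ref{criticalcharacterization} says $G$ is critical iff it has the fewest edges among all graphs equivalent to it, equivalently iff $\mc{M}(G)$ has the fewest crossings among the medial graphs $\mc{M}(G')$ with $G'\sim G$; since all of these share the pairing $\tau(G)$, it suffices to prove the topological claim that a strand diagram realizing a fixed pairing attains the minimum possible number of crossings precisely when it is lensless. For the ``lensless $\Rightarrow$ minimal'' direction, suppose $G$ is lensless but not minimal: then there is an equivalent $G'$ with strictly fewer edges, so $\mc{M}(G')$ has the same pairing $\tau(G)$ but strictly fewer crossings, which I would rule out by showing a lensless diagram already has the minimum crossing number for its pairing (any diagram realizing $\tau(G)$ can be reached from another by motions, which preserve the count, and lens resolutions, which strictly lower it, so a diagram admitting no resolution is minimal). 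For the ``not lensless $\Rightarrow$ not minimal'' direction, I would exhibit a crossing-reducing move: given a lens, an innermost-lens argument produces an empty lens—any geodesic entering the lens must cross its boundary, cutting off a strictly smaller lens, so iterating reaches a lens no geodesic crosses—which can then be resolved, and this resolution is a genuine series/parallel local equivalence yielding an equivalent graph with one fewer edge. A closed geodesic is handled similarly: an innermost one either self-intersects or is crossed by another strand (in both cases producing a lens, reducing to the previous case), or else bounds an empty disk and corresponds to a removable self-loop.

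I expect the main obstacle to lie in the ``not lensless $\Rightarrow$ not minimal'' step: making the innermost-lens induction rigorous (measuring ``innermost'' by enclosed area or by the number of enclosed crossings), dispatching closed geodesics uniformly, and—most importantly—verifying that each intermediate strand diagram remains the medial graph of an honest circular planar graph, so that every crossing-reducing move is a bona fide local equivalence in the sense of Figures \ref{motion} and \ref{lensresolution}, rather than an abstract manipulation of curves. The opposite direction, by contrast, is essentially formal once the edge--crossing dictionary and the invariance of $\tau(G)$ are established.
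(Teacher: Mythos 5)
The paper itself gives no proof of this statement: it is quoted directly from \cite[Lemma 13.1]{curtis}, so your proposal must be measured against the Curtis--Ingerman--Morrow argument. Your overall architecture is reasonable and close in spirit to theirs: reduce criticality to edge-minimality via Theorem \ref{criticalcharacterization}, identify edges of $G$ with crossings of $\mc{M}(G)$, note that the boundary pairing $\tau(G)$ is invariant under local equivalences, and then characterize lensless diagrams as the crossing-minimizing realizations of their pairing (the uniqueness half of which is Proposition \ref{motionsenough}, i.e.\ \cite[Theorem 7.2]{curtis}).

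However, the specific mechanism you propose for the crucial step fails. Your innermost-lens induction asserts that any geodesic entering a lens ``must cross its boundary, cutting off a strictly smaller lens.'' That is false: a geodesic $h$ can traverse the lens, crossing the arc of $g_1$ exactly once and the arc of $g_2$ exactly once; then $h$ forms no lens with either $g_1$ or $g_2$ inside the region --- it cuts off \emph{triangles}. Concretely, a diagram whose unique lens is crossed by a single transversal strand contains no smaller lens and no empty lens anywhere, so your iteration never reaches a resolvable configuration. The repair is to first \emph{empty} the lens by motions (triangle moves pushing crossings out past $p_1$ or $p_2$), using an innermost-\emph{triangle} rather than innermost-lens induction; this is precisely \cite[Lemma 6.2]{curtis}, and the same style of argument appears in this paper as Lemma \ref{constructK}. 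Note that the identical emptying lemma is also the missing ingredient in your ``lensless $\Rightarrow$ minimal'' direction: your claim that every diagram with pairing $\tau(G)$ can be brought to a lensless one using only crossing-preserving motions and crossing-decreasing resolutions (never lens creation) is exactly the statement that lenses can always be emptied and then resolved, so without it that direction is likewise unsupported even granting Proposition \ref{motionsenough}. The closed-geodesic case you sketch is a secondary instance of the same issue (a closed curve crossed by other strands need not bound a lens with any one of them until motions are applied), while your edge--crossing dictionary and the invariance of $\tau(G)$ are fine as stated.
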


In particular, if $G$ is critical, the geodesics of $\mc{M}(G)$ consist only $n$ ``wires'' connecting pairs of the $2n$ boundary medial vertices. Thus, any critical graph $G$ gives a perfect matching of the medial boundary vertices. Furthermore, suppose $H\sim G$ is critical. By Theorem \ref{equivalentconnections} and Proposition \ref{equalconnectionset}, $G$ and $H$ are related by Y-$\Delta$ transformations, so $\mc{M}(G)$ and $\mc{M}(H)$ are related by motions. In particular, $\mc{M}(G)$ and $\mc{M}(H)$ match the same pairs of boundary medial vertices, so we have a well-defined map from critical circular planar graph equivalence classes to matchings. In fact, this map is injective:

\begin{proposition}\label{motionsenough}
Suppose that the geodesics of two lensless medial graphs $\mc{M}(G),\mc{M}(H)$ match the same pairs of medial boundary vertices. Then, the medial $\mc{M}(G)$ and $\mc{M}(H)$ are related by motions, or equivalently, $G$ and $H$ are Y-$\Delta$ equivalent.
\end{proposition}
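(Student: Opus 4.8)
The plan is to reformulate the statement in terms of wiring diagrams and reduce it to a \emph{sweeping} argument, directly analogous to the classical proof that two reduced words for the same permutation are connected by braid moves. Since $\mc{M}(G)$ and $\mc{M}(H)$ are lensless, every geodesic connects two medial boundary vertices and no two geodesics form a lens; by the very definition of a lens this forces any two geodesics to cross \emph{at most once}. Consequently the combinatorial crossing data is determined entirely by the matching of medial boundary vertices: two wires cross if and only if their endpoints interleave around the boundary circle. As $\mc{M}(G)$ and $\mc{M}(H)$ induce the same matching by hypothesis, they have exactly the same set of crossing pairs. The first thing I would record is that a motion (Figure \ref{motion}) is a triangle move: it slides one wire across the transverse intersection of two others, leaving the matching unchanged and preserving the number of crossings between \emph{each} pair of wires. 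In particular every intermediate diagram encountered below automatically remains lensless.

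The main work is a peeling lemma, proved by induction on the order $n$. Fix a medial boundary vertex $r$, let $w$ be the geodesic emanating from it, and let $r'$ and $S$ denote, respectively, the other endpoint of $w$ and the set of wires crossing $w$, both determined by the matching. I would show that, using only motions, $w$ can be brought into a standard position in which it hugs the boundary arc from $r$ to $r'$ on one side, crossing each wire of $S$ exactly once and in the order in which those wires meet that arc. Granting this straightening, the wire $w$ can be deleted to leave a lensless medial graph on $n-1$ wires realizing the induced matching; applying the inductive hypothesis to the two smaller diagrams (via motions that take place away from the boundary-hugging $w$) and then reinstating $w$ in its identical standard position produces a sequence of motions carrying $\mc{M}(G)$ to $\mc{M}(H)$. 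The base case $n=1$ is a single wire and is immediate.

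The heart of the argument, and the step I expect to be the main obstacle, is the straightening of $w$. The curve $w$ together with the boundary arc from $r$ to $r'$ bounds a disk region $R$, and I would proceed by examining an innermost cell of $R$ cut out by $w$ and the arcs of other wires passing through $R$. Because the diagram is lensless no two wires bound a bigon, so an innermost such cell cannot be a $2$-gon; being innermost and bounded by transverse wires it must be a triangle meeting at a triple point, which is exactly the configuration flipped by a single motion. Performing this motion strictly decreases a suitable count (for instance, the number of crossings inside $R$ that are out of standard order), so the process terminates with $w$ swept flush against the boundary. The delicate points to carry out carefully are the rigorous justification of ``innermost implies triangle,'' including the degenerate incidences where cell boundaries run along $w$ or along the boundary arc, and the verification that the terminal position is indeed the claimed standard one; this bookkeeping is where the argument demands precision.
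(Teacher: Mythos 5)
The paper offers no self-contained argument for this proposition --- its ``proof'' is the single line ``Implicit in \cite[Theorem 7.2]{curtis}'' --- so your attempt should be judged as a proof from scratch, and your overall strategy is the right one: it is the standard sweeping argument, and its engine (flip an innermost triangle chosen by a minimality condition, decrease a count, terminate) is exactly the technique the paper itself deploys in Lemma \ref{constructK}, following \cite[Lemma 6.2]{curtis}. Your preliminary reductions are all sound: lenslessness forces any two geodesics to cross at most once (via an innermost-bigon argument), so the crossing pairs are determined by interleaving of matched endpoints; and a motion preserves the number of crossings between each pair of wires, so all intermediate diagrams stay lensless.

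There is, however, a genuine gap: your peeling lemma is false for an \emph{arbitrary} choice of the medial boundary vertex $r$. Suppose some wire $v\notin S$ has both endpoints on the boundary arc from $r$ to $r'$ (a ``nested'' wire). Then $v$ never crosses $w$, so $v$ is trapped in the region $R$ bounded by $w$ and the arc; and whenever some $u\in S$ has its arc-side endpoint between the two endpoints of $v$, the wires $u$ and $v$ interleave, and their crossing lies in $R$. No sequence of motions can evacuate this crossing from $R$: a motion can only slide a crossing across a third wire, and pushing $u\cap v$ out of $R$ would require $v$ to cross $w$ (it cannot, as their endpoints do not interleave) or to cross the boundary circle (absurd). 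Hence $w$ cannot be brought to your standard position hugging the arc, and the delete-and-reinstate induction collapses, since $v$'s endpoints pin it inside the strip where you claim the inductive motions can be avoided. The repair is to choose $w$ \emph{innermost}: among all wires and both of their arcs, pick one minimizing the number of medial boundary vertices interior to the arc. If a nested $v$ existed for this choice, the arc subtended by $v$ would contain strictly fewer medial boundary vertices, a contradiction; so every wire meeting $R$ crosses $w$ exactly once and meets $R$ in a single segment running from the arc to $w$. In that configuration your sweep goes through, provided you also replace the shaky claim ``innermost cell must be a triangle'' (cells of an arrangement can have many sides) by the minimality device of Lemma \ref{constructK}: among crossings in $R$, choose one minimizing the number of enclosed regions cut out by the two wires and $w$, show minimality forces an empty triangle, and flip it. With these two changes --- the innermost choice of $w$ and the minimal-region selection of the triangle --- your argument is correct and gives precisely the proof the paper leaves implicit.
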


\begin{proof}
Implicit in \cite[Theorem 7.2]{curtis}.
\end{proof}

\begin{defn}
Given the boundary vertices of a circular planar graph embedded in a disk $D$, take $2n$ medial boundary vertices as before. A \textbf{wiring diagram} is collection of $n$ smooth curves (wires) embedded in $D$, each of which connects a pair of medial boundary vertices in such a way that each medial boundary vertex has exactly one incident wire. We require that wiring diagrams have no triple crossings or self-loops. As with electrical networks and medial graphs, the \textbf{order} of the wiring diagram is defined to be equal to $n$.
\end{defn}

It is immediate from Proposition \ref{motionsenough} that, given a set of boundary vertices, perfect matchings on the set of medial boundary vertices are in bijection with motion-equivalence classes of lensless wiring diagrams. Thus, we have an injection $G\mapsto\mc{M}(G)$ from critical graph equivalence classes to motion-equivalence classes of lensless wiring diagram, but this map is not surjective. We describe the image of this injection in the next definition:

\begin{defn}
Given boundary vertices $V_{1},\ldots,V_{n}$ and a wiring diagram $W$ on the same boundary circle, a \textbf{dividing line} for $W$ is a line $V_{i}V_{j}$ with $i\neq j$ such that there does not exist a wire connecting two points on opposite sides of $V_{i}V_{j}$. The wiring diagram is called \textbf{full} if it has no dividing lines.
\end{defn}

It is obvious that fullness is preserved under motions. Now, suppose that we have a lensless full wiring diagram $W$; we now define a critical graph $\mc{E}(W)$. Let $D$ be the disk in which our wiring diagram is embedded. The wires of $W$ divide $D$ in to faces, and it is well-known that these faces can be colored black and white such that neighboring faces have opposite colors.

The condition that $W$ be full means that each face contains at most one boundary vertex. Furthermore, all boundary vertices are contained in faces of the same color; without loss of generality, assume that this color is black. Then place an additional vertex inside each black face which does not contain a boundary vertex. The boundary vertices, in addition to these added interior vertices, form the vertex set for $\mc{E}(W)$. Finally, two vertices of $\mc{E}(W)$ are connected by an edge if and only if their corresponding faces share a common point on their respective boundaries, which must be an intersection $p$ of two wires of $W$. This edge is drawn as to pass through $p$. An example is shown in Figure \ref{wiringtoelectrical}.

\begin{figure}
\begin{center}
\includegraphics[scale=0.7]{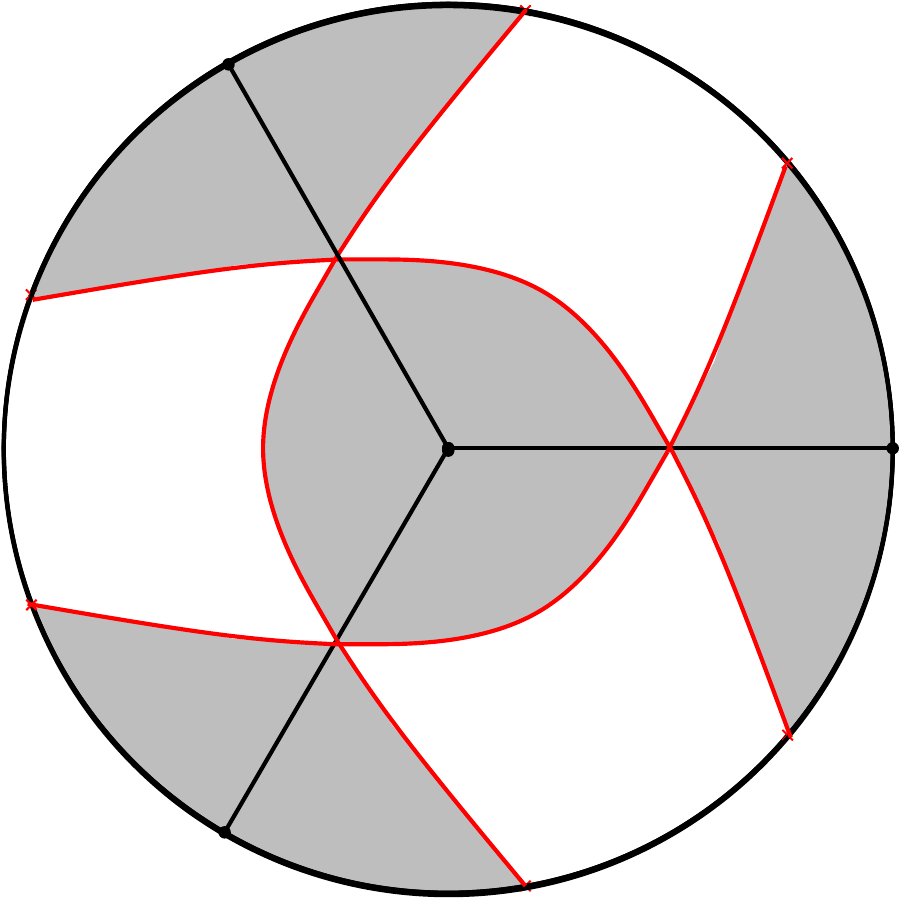}
\caption{Recovering an electrical network from its (lensless) medial graph.}\label{wiringtoelectrical}
\end{center}
\end{figure}

It is straightforward to check that $\mc{M}$ and $\mc{E}$ are inverse maps. We have thus proven the following result:

\begin{theorem}\label{critmedbij}
The associations $G\mapsto\mc{M}(G)$ and $W\mapsto\mc{E}(W)$ are inverse bijections between equivalence classes of critical graphs and motion-equivalence classes of full lensless wiring diagrams.
\end{theorem}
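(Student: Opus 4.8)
The plan is to leverage the work already done in the preceding discussion and reduce the theorem to a short list of verifications, most of which are geometric bookkeeping. Recall that we have already produced a well-defined injection $[G]\mapsto[\mc{M}(G)]$ from equivalence classes of critical graphs to motion-equivalence classes of lensless wiring diagrams: lenslessness is Theorem \ref{criticalmedial}, while well-definedness and injectivity come from Proposition \ref{motionsenough} together with the fact (via Theorem \ref{equivalentconnections} and Proposition \ref{equalconnectionset}) that equivalent critical graphs are $Y$-$\Delta$ equivalent and so have medial graphs related by motions. It therefore remains to show three things: (i) the image of $\mc{M}$ is precisely the set of \emph{full} lensless wiring diagrams; (ii) $\mc{E}$ carries motion-equivalence classes of full lensless wiring diagrams to equivalence classes of critical graphs; and (iii) $\mc{M}$ and $\mc{E}$ are mutually inverse. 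Given these, $\mc{M}$ is a bijection onto full lensless wiring diagram classes with inverse $\mc{E}$.

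First I would carry out the core computation, that $\mc{E}\circ\mc{M}$ and $\mc{M}\circ\mc{E}$ are the identity, by tracing the two constructions against each other. Starting from a critical $G$, the faces of $\mc{M}(G)$ are two-colored; under the standard correspondence between the faces of a medial graph and the vertices and faces of the underlying graph, the black faces are exactly those containing a vertex of $G$. Thus the interior vertices added by $\mc{E}$ recover the non-boundary vertices of $G$, and since each edge of $\mc{E}(\mc{M}(G))$ is drawn through a crossing of $\mc{M}(G)$ — which sits on a red vertex lying on an edge of $G$ — we recover exactly the edges of $G$, so $\mc{E}(\mc{M}(G))=G$. Conversely, starting from a full lensless $W$, the red vertices that $\mc{M}$ places on the edges of $\mc{E}(W)$ lie at the wire-crossings of $W$, and the resulting geodesics run alongside the wires of $W$; hence $\mc{M}(\mc{E}(W))$ is motion-equivalent to $W$. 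Throughout, one must keep careful track of the clockwise/counterclockwise conventions in the definition of $\mc{M}$ near the boundary, and of the two-coloring, to be sure the recovered object is genuinely the original.

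With $\mc{M}\circ\mc{E}=\mathrm{id}$ in hand, claims (i) and (ii) follow cleanly. For (i): as noted in the construction of $\mc{E}$, fullness is equivalent to each face of the diagram containing at most one boundary vertex, and in $\mc{M}(G)$ distinct vertices of $G$ occupy distinct (vertex-type) faces by the face correspondence above; hence no face of $\mc{M}(G)$ holds two boundary vertices, so $\mc{M}(G)$ is full. Conversely every full lensless $W$ equals $\mc{M}(\mc{E}(W))$ and so lies in the image. For (ii): since $\mc{M}(\mc{E}(W))=W$ is lensless, Theorem \ref{criticalmedial} shows $\mc{E}(W)$ is critical; and because motions of wiring diagrams correspond to $Y$-$\Delta$ transformations of the associated graph, motion-equivalent diagrams yield $Y$-$\Delta$-equivalent, hence equivalent, graphs, so $\mc{E}$ descends to motion-equivalence classes.

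The main obstacle is the purely geometric verification of the second paragraph: confirming that the face two-coloring behaves as claimed, that the boundary conventions in the construction of $\mc{M}$ match the face-adjacency rules used to build $\mc{E}$, and in particular that the interior vertices and crossings line up so that the two constructions genuinely undo one another. None of this is deep, but it is the part that requires care; everything else reduces to the already-cited Theorem \ref{criticalmedial}, Theorem \ref{equivalentconnections}, and Proposition \ref{motionsenough}.
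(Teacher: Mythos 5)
Your proposal is correct and takes essentially the same approach as the paper: the paper likewise assembles the injection $[G]\mapsto[\mc{M}(G)]$ from Theorem \ref{criticalmedial}, Theorem \ref{equivalentconnections}, Proposition \ref{equalconnectionset}, and Proposition \ref{motionsenough}, constructs $\mc{E}$ on full lensless diagrams via the two-coloring of faces, and then simply states that checking $\mc{M}$ and $\mc{E}$ are mutually inverse is straightforward. Your second and third paragraphs merely flesh out those verifications (fullness of $\mc{M}(G)$ via the face--vertex correspondence, and the two inverse checks) that the paper leaves to the reader.
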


Finally, let us discuss the analogues of contraction and deletion in medial graphs. Each operation corresponds to the \textbf{breaking} of a crossing, as shown in Figure \ref{breaking}. A crossing may be broken in two ways: breaking outward from the corresponding edge of the underlying electrical network corresponds to contraction, and breaking along the edge corresponds to deletion. In the same way that contraction or deletion of an edge in a critical graph is not guaranteed to yield a critical graph, breaking a crossing in lensless medial graphs does not necessarily yield a lensless medial graph.

\begin{figure}
\begin{center}
\includegraphics[scale=0.4]{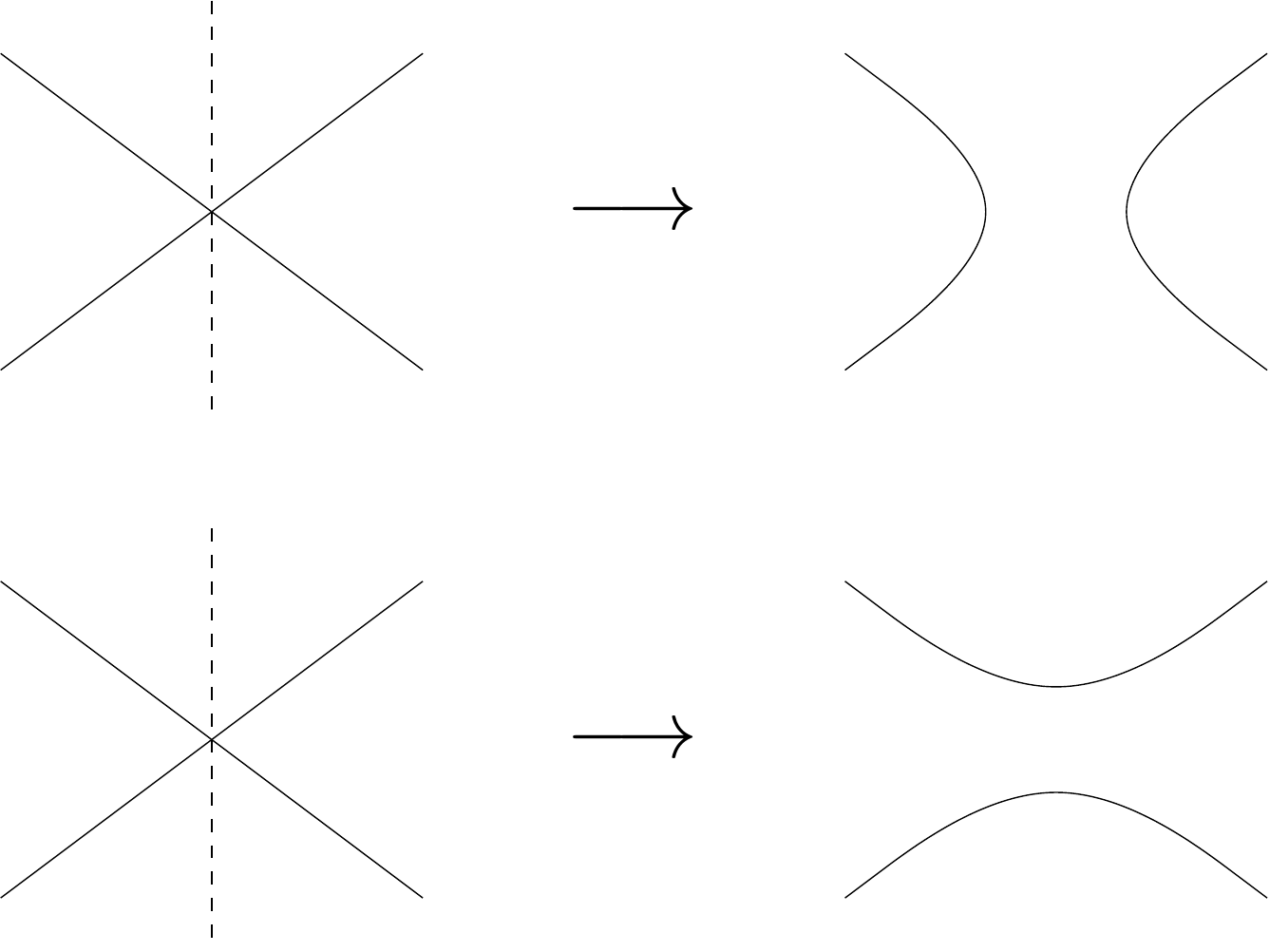}
\caption{Breaking a crossing, in two ways.}\label{breaking}
\end{center}
\end{figure}

Not all breakings of crossings are valid, as some crossings may be broken in a particular way to create a dividing line. In fact, it is straightforward to check that creating a dividing line by breaking a crossing corresponds to contracting a boundary edge, which we also do not allow. Thus, we allow all breakings of crossings as long as no dividing lines are created; such breakings are called \textbf{legal}.

\section{The Electrical Poset $EP_{n}$}\label{poset}

We now consider $EP_{n}$, the poset of circular planar graphs under contraction and deletion. We will find that, equivalently, $EP_{n}$ is the poset of disjoint cells $\Omega(G)$ (see Remark \ref{omegapi}) under containment in closure.

\subsection{Construction}

Before constructing $EP_{n}$, we need a lemma to guarantee that the order relation will be well-defined.

\begin{lemma}\label{orderwelldefined}
Let $G$ be a circular planar graph, and suppose that $H$ can be obtained from $G$ by a sequence of contractions and deletions. Consider a circular planar graph $G'$ with $G'\sim G$. Then, there exists a sequence of contractions and deletions starting from $G'$ whose result is some $H'\sim H$.
\end{lemma}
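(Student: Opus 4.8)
The plan is to reduce the statement to the case where $G$ and $G'$ are related by a single local equivalence, and then analyze how a single contraction or deletion on $G$ can be mimicked on $G'$. By Theorem \ref{localeqenough} (in its graph-only form), since $G'\sim G$, the two graphs are related by a finite sequence of local equivalences. Thus it suffices to prove the lemma when $G'$ is obtained from $G$ by one local equivalence, and then iterate: having found $H'\sim H$ reachable from $G'$ by contractions/deletions, we repeat with the next local equivalence in the chain. Moreover, since $H$ is reached from $G$ by a sequence of contractions and deletions, it suffices to handle a single such operation on $G$ and then induct on the length of that sequence. So the core task is: given that $G'$ differs from $G$ by one local equivalence, and that $H$ is obtained from $G$ by deleting or contracting a single edge $e$, produce a graph $H'\sim H$ reachable from $G'$ by contractions and deletions.

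First I would fix notation and treat the four types of local equivalence separately. If the edge $e$ being removed is disjoint from (not involved in) the local equivalence relating $G$ and $G'$, then the same local equivalence relates $H$ to the graph $H'$ obtained by performing the corresponding removal on $G'$; here $H'\sim H$ is immediate and $H'$ is reached from $G'$ by one operation. The substantive cases are when $e$ interacts with the edges altered by the local equivalence. For series/parallel replacements and self-loop/spike removals, one checks directly by hand that deleting or contracting $e$ commutes, up to equivalence, with the local move, possibly requiring one or two extra contractions/deletions on the $G'$ side to reach a graph equivalent to $H$. For instance, if $e$ is one of two parallel edges in $G$ that get merged in $G'$, then deleting $e$ in $G$ yields the remaining single edge, which is exactly (equivalent to) the merged edge in $G'$, so $H' = G'$ works or differs from it by a trivial operation.

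The main obstacle, as expected, will be the Y-$\Delta$ transformation, since it changes the vertex and edge structure most drastically: a degree-three interior vertex $w$ with edges to $v_1,v_2,v_3$ is replaced by a triangle on $v_1,v_2,v_3$. The key observation to exploit is that contracting one of the three spokes $v_iw$ of the Y and then cleaning up (removing resulting self-loops, merging parallel edges) lands in the same equivalence class as deleting the corresponding triangle edge $v_jv_k$ of the $\Delta$, and dually, deleting a spoke corresponds to contracting a triangle edge. I would enumerate which removal on the Y corresponds to which removal on the $\Delta$ by a careful but routine case check, using that local equivalences preserve the equivalence class and hence $\pi(-)$ by Proposition \ref{equalconnectionset}. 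In each case the target $H'$ is reached from $G'$ by a short explicit sequence of contractions and deletions, and $H'\sim H$ follows either by direct inspection of the connection sets or by observing that $H'$ and $H$ are themselves related by a local equivalence.

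Finally, I would assemble these cases into the inductive argument: peel off one local equivalence at a time from the chain relating $G$ to $G'$, and at each stage transport the sequence of removals producing $H$ into a sequence of removals on the next graph producing something in the class $[H]$. Because each step only ever appends finitely many contractions and deletions and preserves membership in $[H]$, the composite gives a sequence from $G'$ to some $H'\sim H$, completing the proof. The only care needed is bookkeeping to ensure that after transporting the removals across a local equivalence, the edge targeted by the next removal still exists (or has an unambiguous image); this is where tracking the explicit correspondence of edges under each local move, rather than just the equivalence class, does the real work.
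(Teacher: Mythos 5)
Your overall architecture is exactly the paper's: reduce via Theorem \ref{localeqenough} to a single local equivalence relating $G$ and $G'$, reduce by induction to a single contraction or deletion producing $H$, dispose of the case where the removed edge is disjoint from the local move, and then transport the removal across each type of local equivalence by explicit case analysis. However, the one concrete mathematical claim you offer as the ``key observation'' for the Y-$\Delta$ case is wrong in two of the four subcases. The pairing ``contract a spoke $v_1w$ $\leftrightarrow$ delete the opposite triangle edge $v_2v_3$'' is correct (the two results are literally the same graph, and dually deleting a triangle edge $v_1v_2$ is mimicked by contracting the opposite spoke $v_3w$). But ``deleting a spoke corresponds to contracting a triangle edge'' is false: deleting the spoke $v_1w$ leaves the series path $v_2$--$w$--$v_3$, which is equivalent to a single edge $v_2v_3$ with $v_2$ and $v_3$ remaining \emph{distinct} vertices, and with $v_1$ cut off from the gadget; whereas contracting the triangle edge $v_2v_3$ \emph{identifies} $v_2$ with $v_3$ and leaves (after merging the resulting parallel pair) an edge joining $v_1$ to the identified vertex. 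These lie in different equivalence classes --- the connection sets $\pi(-)$ differ, since $v_1$ is connected to the merged vertex in one graph and disconnected from the gadget in the other --- so by Proposition \ref{equalconnectionset} your transported graph would not be equivalent to $H$, and the induction breaks at that step.

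The correct transport, as in the paper's proof, is asymmetric and sometimes costs two removals (which the lemma permits, since it only asks for a sequence): deleting the spoke $v_1w$ on the Y side is mimicked by deleting \emph{both} edges $v_1v_2$ and $v_1v_3$ on the $\Delta$ side (the results agree up to a series replacement), and contracting a triangle edge $v_1v_2$ on the $\Delta$ side is mimicked by contracting \emph{both} spokes $v_1w$ and $v_2w$ on the Y side (the results agree up to a parallel replacement). With this corrected four-case table your argument coincides with the paper's; the remaining cases you sketch (series/parallel replacements, self-loop and spike removal, and the disjoint-edge observation), as well as your closing point about tracking explicit edge correspondences rather than equivalence classes, all match the paper's treatment.
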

\begin{proof}
By induction, we may assume that $H$ can be obtained from $G$ by one contraction or one deletion. Furthermore, by Theorem \ref{localeqenough}, we may assume by induction that $G$ and $G'$ are related by a local equivalence. If this local equivalence is the deletion of a self-loop or boundary spike, the result is trivial. Next, suppose $G'$ is be obtained from $G$ by one Y-$\Delta$ transformation. We have several cases: in each, let the vertices of the Y (and $\Delta$) to which the transformation is applied be $A,B,C$, and let the central vertex of the Y, which may be in $G$ or $G'$ be $P$. In each case, if the deleted or contracted edge of $G$ is outside the Y or $\Delta$, it is clear that the same edge-removal may be performed in $G'$.
\begin{itemize}
\item 
Suppose that a Y in $G$ may be transformed to a $\Delta$ in $G'$, and that $H$ is obtained from $G$ by contraction, without loss of generality, of $AP$. Then, deleting the edge $BC$ from $G'$ yields $H'\sim H$.

\item 
Suppose that a Y in $G$ may be transformed to a $\Delta$ in $G'$, and that $H$ is obtained from $G$ by deletion, without loss of generality, of $AP$. Then, deleting $AB$ and $AC$ from $G'$ yields $H'\sim H$.

\item
Suppose that a $\Delta$ in $G$ may be transformed to a Y in $G'$, and that $H$ is obtained from $G$ by deletion, without loss of generality, of $AB$. Then, contracting $CP$ in $G'$ yields $H'\sim H$.

\item
Suppose that a $\Delta$ in $G$ may be transformed to a Y in $G'$, and that $H$ is obtained from $G$ by deletion, without loss of generality, of $AB$. Then, contracting $AP$ to $A$ and $AB$ to $B$ in $G'$ yields $H'\sim H$.

\end{itemize}

Next, consider the case in which we have parallel edges $e,f$ connecting the vertices $A,B$ in $G$, and that $G'$ is obtained by removing $e$ (analogous to replacing the parallel edges by a single edge). If, in $G$, we contract or delete an edge not connecting $A$ and $B$ to get $H$, we can perform the same operation in $G'$ and then delete $E$ to get $H'\sim H$. If, instead, we contract an edge between $A$ and $B$ to get $H$ from $G$, we perform the same operation in $G'$, and then delete $e$, which became a self-loop. Finally, if we delete an edge between $A$ and $B$ to get $H$, then we can delete the same edge in $G'$ to get $H$, unless $e$ is deleted from $G$, in which case we can take $H'=H$.

Now, suppose $G'$ can be obtained from $G$ by adding an edge $e$ in parallel to an edge already in $G$. Then, if we contract or delete an edge $f$ in $G$ to get $H$, we can perform the same operation in $G'$, then delete $e$, to get $H'\sim H$. 

The case in which $G'$ and $G$ are related by contracting an edge in series with another edge follows from a similar argument. We have exhausted all local equivalences, completing the proof.
\end{proof}

For distinct equivalence classes $[G],[H]$, we may now define $[H]<[G]$ if, given any $G\in[G]$, there exists a sequence of contractions and deletions that may be applied to $G$ to obtain an element of $[H]$. We thus have a (well-defined) \textbf{electrical poset of order $n$}, denoted $EP_{n}$, of equivalence classes of circular planar graphs or order $n$. If $H\in [H]$ and $G\in [G]$ with $[H]<[G]$, we will write $H<G$.

Figure \ref{EP3} shows $EP_{3}$, with elements represented as medial graphs (left) and electrical networks (right). Theorem \ref{critrep} guarantees that the electrical networks may be taken to be critical. Note that $EP_{3}$ is isomorphic to the Boolean Lattice $B_{3}$, because all critical graphs of order 3 arise from taking edge-subsets of the top graph.

\begin{figure}
\begin{center}
\includegraphics[scale=0.3]{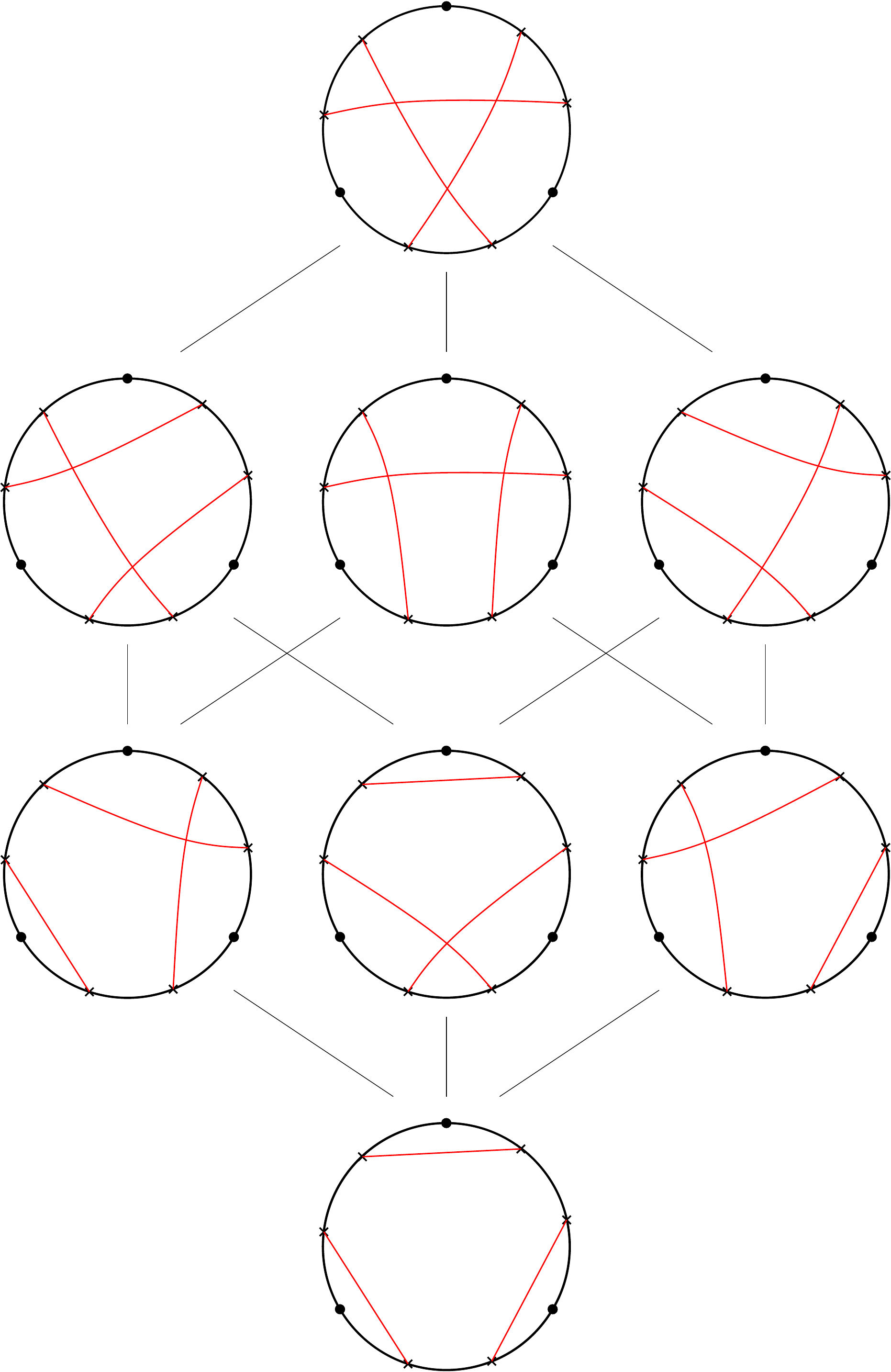}
\qquad
\includegraphics[scale=0.3]{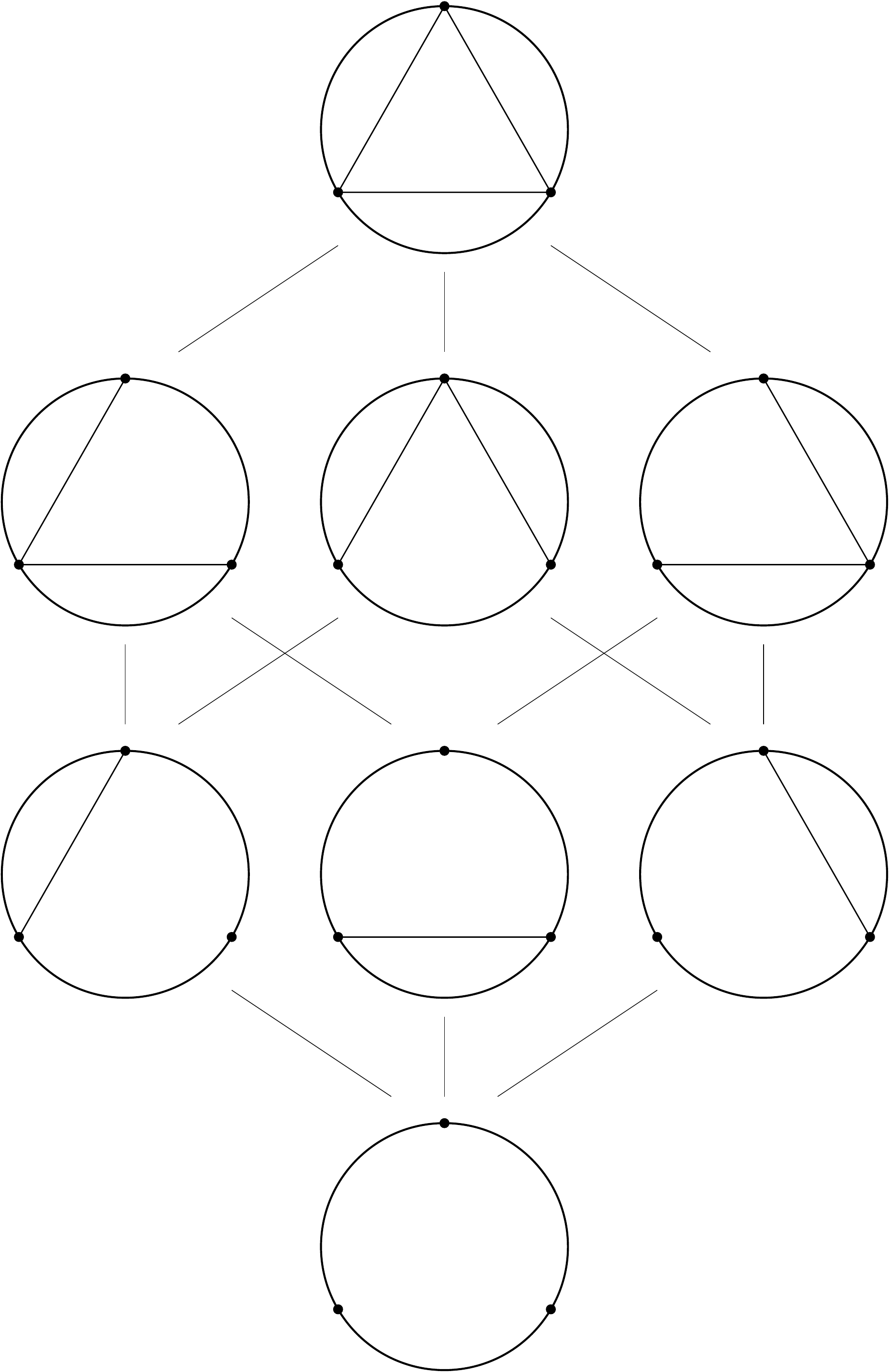}
\caption{$EP_{3}$}\label{EP3}
\end{center}
\end{figure}

Let us now give an alternate description of the poset $EP_{n}$.  Associated to each circular planar graph $G$, we have an open cell $\Omega(G)$ of response matrices for conductances on $G$, where $\Omega(G)$ is taken to be a subset of the space $\Omega_{n}$ of symmetric $n\times n$ matrices. It is clear that, if $G\sim G'$, we have, by definition, $\Omega(G)=\Omega(G')$.

\begin{proposition}\label{closure}
Let $G$ be a circular planar graph. Then, 
\begin{equation}
\overline{\Omega(G)}=\bigsqcup_{H\le G}\Omega(H),
\end{equation}
where $\overline{\Omega(G)}$ denotes the closure of $\Omega(G)$ in $\Omega_{n}$, and the union is taken over equivalence classes of circular planar graphs $H\le G$ in $EP_{n}$.
\end{proposition}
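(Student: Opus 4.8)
The plan is to prove the two inclusions separately, after first using Theorem \ref{critrep} and the fact that $\Omega(-)$ depends only on the equivalence class to replace $G$ by a critical representative with $N$ edges. Disjointness of the union is then automatic: each $\Omega(H)$ is one of the cells $\Omega(\pi)$, and by Proposition \ref{equalconnectionset} distinct equivalence classes give distinct $\pi$, hence disjoint cells. Throughout I will use that the response matrix of a network is a continuous function of its conductances---it is the Schur complement onto the boundary of the weighted Laplacian, hence rational with positive denominator in the $\gamma(e)$---and that the circular minors are in turn continuous in the matrix entries.

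For the inclusion $\bigsqcup_{H\le G}\Omega(H)\subseteq\overline{\Omega(G)}$, I would reduce to the case where $H$ is obtained from $G$ by a single contraction or deletion, the general case following from Lemma \ref{orderwelldefined} and monotonicity and idempotence of closure. For a single deletion of $e$, lift any conductance on $H$ to $G$ by assigning $e$ the value $\varepsilon$; as $\varepsilon\to 0^{+}$ the response matrices lie in $\Omega(G)$ and converge to the given matrix in $\Omega(H)$, so $\Omega(H)\subseteq\overline{\Omega(G)}$. For a single contraction the same argument works with $\gamma(e)=T\to+\infty$, using that an infinite conductance identifies the endpoints of $e$ and yields in the limit the response matrix of the contracted network.

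For the reverse inclusion, let $M\in\overline{\Omega(G)}$. Since the space of response matrices is closed---the conditions of Theorem \ref{circularminorspositive}(a) (symmetry, vanishing row and column sums, nonnegative circular minors) are all closed---$M$ is again a response matrix, so it lies in a unique cell $\Omega(H)$ for some equivalence class $H$, which we take critical. By Theorem \ref{circularminorspositive}(b), $\pi(H)$ is exactly the set of circular pairs whose minor is positive at $M$; each such minor, being continuous and positive at $M$, stays positive on a neighborhood, hence on nearby points of $\Omega(G)$, giving $\pi(H)\subseteq\pi(G)$. It remains to prove the combinatorial implication $\pi(H)\subseteq\pi(G)\Rightarrow H\le G$.

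This implication is the crux, and I would prove it by induction on $|\pi(G)\setminus\pi(H)|$. If this set is empty then $\pi(H)=\pi(G)$, so $H\sim G$ by Proposition \ref{equalconnectionset} and we are done. Otherwise it suffices to produce a single legal breaking of a crossing of the lensless medial graph $\mc{M}(G)$ (equivalently, by Theorem \ref{criticalmedial} and Figure \ref{breaking}, a contraction or deletion) yielding $G'$ with $\pi(H)\subseteq\pi(G')\subsetneq\pi(G)$; since $G$ is critical every such move strictly shrinks $\pi$, so $G'<G$, and induction gives $H\le G'<G$. The main obstacle is precisely the existence of such a move: one must find a crossing of $\mc{M}(G)$ whose breaking destroys some connection in $\pi(G)\setminus\pi(H)$ while preserving every connection of $\pi(H)$. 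I expect to locate it by working in the wiring-diagram picture of Theorem \ref{critmedbij}, fixing a circular pair in $\pi(G)\setminus\pi(H)$ and breaking a crossing along a geodesic that separates its vertex-disjoint path family; checking that such a breaking can always be chosen legal and $\pi(H)$-preserving is the delicate combinatorial step.
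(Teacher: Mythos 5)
Your setup, disjointness argument, and forward inclusion coincide with the paper's: the paper likewise reduces to a critical representative, uses Theorem \ref{diffeo}, and obtains $\Omega(H)\subset\overline{\Omega(G)}$ for a single contraction or deletion by sending the relevant conductance to $+\infty$ or $0$, then inducts. The gap is in your reverse inclusion. You correctly get that any $M\in\overline{\Omega(G)}$ is a response matrix lying in some cell $\Omega(H)$ with $\pi(H)\subseteq\pi(G)$, but you then stake everything on the combinatorial implication $\pi(H)\subseteq\pi(G)\Rightarrow H\le G$, and your proposed induction leaves its only nontrivial step---the existence of a legal breaking of a crossing of $\mc{M}(G)$ that destroys some connection in $\pi(G)\setminus\pi(H)$ while preserving every connection in $\pi(H)$---as an expectation rather than a proof. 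That step is not a routine check: a breaking can create lenses that must be resolved (changing further connections), legality (no dividing lines) must be arranged, and one must control the entire set $\pi(H)$ simultaneously, not just one circular pair. For calibration, the paper needs the delicate medial-graph surgery of Lemmas \ref{constructK} and \ref{constructK'} merely to analyze a single cover relation in Proposition \ref{coverrelation}; the implication you assume is at least as hard, and nothing in this paper supplies it.

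The paper's proof avoids that combinatorial statement entirely by working in conductance coordinates. Writing $M=\lim M_{k}$ with $M_{k}\in\Omega(G)$, it pulls back via the diffeomorphism of Theorem \ref{diffeo} to conductance vectors $C_{k}=r_{G}^{-1}(M_{k})$ and argues that for each edge $e$ the sequence $\{C(e)_{k}\}$ either converges to a finite nonnegative limit or tends to $+\infty$; it then rules out $C(e)_{k}\to+\infty$ for boundary edges (otherwise an entry of $M_{k}$ would blow up, contradicting convergence), so the required contractions are admissible. Contracting the edges whose conductances tend to $+\infty$ and deleting those tending to $0$ exhibits $H$ directly as obtained from $G$ by contractions and deletions, i.e.\ $H\le G$, with no need to compare connection sets. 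To complete your route you would have to actually prove the implication $\pi(H)\subseteq\pi(G)\Rightarrow H\le G$ (a true but substantial theorem in its own right); as written, your argument establishes only the easy inclusion plus $\pi(H)\subseteq\pi(G)$, which does not yet place $H$ below $G$ in $EP_{n}$.
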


Because the $\Omega(G)$ are pairwise disjoint when we restrict ourselves to equivalence classes of circular planar graphs (a consequence of Theorems \ref{circularminorspositive} and \ref{equivalentconnections}), we get:

\begin{theorem} \label{closureorder}
$[H]\le[G]$ in $EP_{n}$ if and only if $\Omega(H)\subset\overline{\Omega(G)}$.
\end{theorem}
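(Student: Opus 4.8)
The plan is to deduce the theorem directly from Proposition \ref{closure}, which supplies the stratification $\overline{\Omega(G)}=\bigsqcup_{K\le G}\Omega(K)$, together with the pairwise disjointness of the cells. I would first record why the cells are genuinely disjoint across distinct equivalence classes: if a response matrix $M$ lay in $\Omega(K_1)\cap\Omega(K_2)$, then by Theorem \ref{circularminorspositive}b its set of positive circular minors would equal both $\pi(K_1)$ and $\pi(K_2)$, forcing $\pi(K_1)=\pi(K_2)$ and hence $[K_1]=[K_2]$ by Proposition \ref{equalconnectionset}. I would also note that each $\Omega(H)$ is nonempty, since assigning unit conductance to every edge of $H$ produces a response matrix lying in $\Omega(H)$.

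The forward direction is then immediate: if $[H]\le[G]$, then $\Omega(H)$ is literally one of the summands in the disjoint union of Proposition \ref{closure}, so $\Omega(H)\subset\overline{\Omega(G)}$.

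For the reverse direction, I would suppose $\Omega(H)\subset\overline{\Omega(G)}=\bigsqcup_{K\le G}\Omega(K)$ and choose any point $M\in\Omega(H)$, which exists by nonemptiness. This $M$ must lie in some summand $\Omega(K)$ with $K\le G$. Since $M\in\Omega(H)\cap\Omega(K)$ and the cells are pairwise disjoint across equivalence classes, I conclude $[H]=[K]$, and therefore $[H]=[K]\le[G]$, as desired.

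I expect no real obstacle in this derivation itself: granting Proposition \ref{closure}, the only points to verify are the nonemptiness of $\Omega(H)$ and the correct bookkeeping with disjointness, both routine. The genuine difficulty lies upstream, in establishing the closure formula of Proposition \ref{closure}, which identifies the boundary strata of a top cell with exactly those classes reachable from $G$ by contractions and deletions.
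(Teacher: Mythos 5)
Your proposal is correct and takes essentially the same route as the paper, which likewise treats Theorem \ref{closureorder} as an immediate consequence of Proposition \ref{closure} together with the pairwise disjointness of the cells $\Omega(K)$ (the paper cites Theorems \ref{circularminorspositive} and \ref{equivalentconnections} for disjointness, which is the same content as your appeal to Theorem \ref{circularminorspositive}b and Proposition \ref{equalconnectionset}). Your explicit bookkeeping of nonemptiness of $\Omega(H)$ and the point-selection argument for the reverse direction merely spells out what the paper leaves implicit.
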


\begin{proof}[Proof of Proposition \ref{closure}]
Without loss of generality, we may take $G$ to be critical. Let $N$ be the number of edges of $G$. By \ref{diffeo}, the map $r_{G}:\mathbb{R}_{>0}^{N}\rightarrow\Omega(G)\subset\Omega_{n}$, sending a collection of conductances of the edges of $G$ the resulting response matrix, is a diffeomorphism. We will describe a procedure for producing a response matrix for any electrical network whose underlying graph $H$ is obtainable from $G$ by a sequence of contractions and deletions (that is, $H\le G$).

Given $\gamma\in\mathbb{R}^{N}_{>0}$, write $\gamma=(\gamma_{1},\ldots,\gamma_{N})$. Note that for each $i\in[1,N]$ and fixed conductances $\gamma_{1},\ldots,\widehat{\gamma_{i}},\ldots,\gamma_{n}$, the limit $\lim_{\gamma_{i}\rightarrow0}r_{G}(\gamma)$ must exist; indeed, sending the conductance $\gamma_{i}$ to zero is equivalent to deleting its associated edge. This fact is most easily seen by physical reasoning: an edge of zero conductance has no current flowing through it, and thus the network may as well not have this edge. Thus, $\lim_{\gamma_{i}\rightarrow0}r_{G}(\gamma)$ is just $r_{G'}(\gamma_{1},\ldots,\widehat{\gamma_{i}},\ldots,\gamma_{n})$, where $G'$ is the result of deleting $e$ from $G$. Similarly, we find that $\lim_{\gamma_{i}\rightarrow\infty}r_{G}(\gamma)$ is $r_{G''}(\gamma_{1},\ldots,\hat{\gamma_{i}},\ldots,\gamma_{n})$, where $G''$ is the result of contracting $e$.

It follows easily, then, that for all $H$ which can be obtained from $G$ by a contraction or deletion, we have $\Omega(H)\subset\overline{\Omega(G)}$, because, by the previous paragraph, $\Omega(H)=\text{Im}(r_{H})\subset\overline{\Omega(G)}$. By induction, we have the same for all $H\le G$.

It is left to check that any $M\in\overline{\Omega(G)}$ is in some cell $\Omega(H)$ with $H\le G$. We have that $M$ is a limit of response matrices $M_{1},M_{2},\ldots\in\Omega(G)$. The determinants of the circular minors of $M$ are limits of determinants of the same minors of the $M_{i}$, and thus non-negative. It follows that $M$ is the response matrix for some network $H$, that is, $M\in\Omega(H)$. We claim that $H\le G$, which will finish the proof.

Consider the sequence $\{C_{k}\}$ defined by $C_{k}=r_{G}^{-1}(M_{k})$, which is a sequence of conductances on $G$. For each edge $e\in G$, we get a sequence $\{C(e)_{k}\}$ of conductances of $e$ in $\{C_{k}\}$. It is then a consequence of the continuity of $r_{G},r_{G}^{-1}$, and the existence of the limits $\lim_{\gamma_{i}\rightarrow0}r_{G}(\gamma),\lim_{\gamma_{i}\rightarrow\infty}r_{G}(\gamma)$, that the sequences $\{C(e)_{k}\}$ each converge to a finite nonnegative limit or otherwise go to $+\infty$.

Furthermore, we claim that for a boundary edge $e$ (that is, one that connects two boundary vertices), $\{C(e)_{k}\}$ cannot tend to $+\infty$. Suppose, instead, that such is the case, that for some boundary edge $e=V_{i}V_{j}$, we have $C(e)_{k}\rightarrow\infty$. Then, note that imposing a positive voltage at $V_{i}$ and and zero voltage at all other boundary vertices sends the current measurement at $V_{i}$ to $-\infty$ as $C(e)_{k}\rightarrow\infty$. In particular, our sequence $M_{1},M_{2},\ldots$ cannot converge, so we have a contradiction.

To finish, it is clear, for example, using similar ideas to the proof of the first direction, that contracting the edges $e$ for which $C(e)_{k}\rightarrow\infty$ (which can be done because such $e$ cannot be boundary edges) and deleting those for which $C(e)_{k}\rightarrow0$ yields $H$. The proof is complete.
\end{proof}

\subsection{Gradedness}

In this section, we prove our first main theorem, that $EP_{n}$ is graded.

\begin{proposition}\label{coverrelation}
$[G]$ covers $[H]$ in $EP_{n}$ if and only if, for a critical representative $G\in[G]$, an edge of $G$ may be contracted or deleted to obtain a critical graph in $[H]$.
\end{proposition}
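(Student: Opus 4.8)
The plan is to introduce the rank statistic $\rho([G])$ equal to the number of edges of a critical representative of $[G]$, and to prove the proposition by playing the cover relation off against the strict monotonicity of $\rho$. First I would check that $\rho$ is well defined: by Theorem \ref{criticalcharacterization} a graph is critical precisely when it minimizes the edge count in its equivalence class, and by Theorem \ref{equivalentconnections} together with Proposition \ref{equalconnectionset} any two critical representatives are $Y$-$\Delta$ equivalent and so have the same number of edges. Next I would establish strict monotonicity: if $[H] < [G]$, choose the critical representative $G$ with $N$ edges; by definition some sequence of at least one contraction or deletion carries $G$ to a representative $H_{0} \in [H]$, and since each operation lowers the edge count by one, $H_{0}$ has fewer than $N$ edges, whence $\rho([H]) \le |E(H_{0})| < N = \rho([G])$.

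The reverse implication of the proposition is then immediate. If $G$ is critical and some edge-removal produces a critical graph $H \in [H]$, then $\rho([G]) = \rho([H]) + 1$ and $[H] < [G]$; any $[K]$ with $[H] < [K] < [G]$ would satisfy $\rho([H]) < \rho([K]) < \rho([G])$ by strict monotonicity, which is impossible for integers differing by one. Hence $[G]$ covers $[H]$.

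For the forward implication, suppose $[G]$ covers $[H]$, fix critical $G$ with $N$ edges and a critical $H \in [H]$ with $\rho([H]) = N - r$. Since $H < G$, there is a chain of single operations $G = G_{0} \to G_{1} \to \cdots \to G_{r} = H$ (the length is forced to equal $r$, since the endpoints are critical and each step drops the edge count by one, using that series, parallel, and loop or spike reductions are themselves contractions or deletions). The class $[G_{1}]$ satisfies $[H] \le [G_{1}] < [G]$ --- it cannot equal $[G]$, as $G_{1}$ has only $N-1$ edges --- so the covering hypothesis forces $[G_{1}] = [H]$. The proposition will follow once I show that $G_{1}$ may itself be taken critical, that is, that $r = 1$.

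To show $r = 1$ I would pass to medial graphs, where by Theorems \ref{criticalmedial} and \ref{critmedbij} critical graphs correspond to lensless full wiring diagrams and edge-removals correspond to legal breakings of crossings. The main obstacle is exactly that breaking a crossing of a lensless diagram need not stay lensless: a reconnected geodesic can meet a third geodesic in two points, producing a lens, which is precisely what happens when $r \ge 2$. I would resolve this by arguing that whenever the break of $\mc{M}(G)$ realizing $G \to G_{1}$ creates such a lens, there is a \emph{different} crossing of $\mc{M}(G)$ whose legal breaking yields a lensless diagram $G'$ with $H \le G' < G$; such a $G'$ has $\rho([G']) = N - 1 > \rho([H])$, so $[H] < [G'] < [G]$, contradicting that $[G]$ covers $[H]$. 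Locating this crossing --- using the two former crossings of $\mc{M}(G)$ that bound the offending lens, and verifying that breaking one of them preserves both lenslessness and the relation $H \le G'$ --- is the technical heart of the argument and the step I expect to demand the most care.
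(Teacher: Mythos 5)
Your reduction is correct as far as it goes, and it mirrors the paper's own strategy: the easy direction via the edge-count rank of critical representatives is essentially the paper's first paragraph, and your observation that a cover forces the first removal $G\to G_{1}$ to land in $[H]$ (so that everything hinges on whether $G_{1}$ can be taken critical, i.e.\ $r=1$) is exactly how the paper frames the hard direction. But the proposal stops precisely where the real proof begins. The claim that whenever the break realizing $G\to G_{1}$ creates a lens there is a different crossing whose legal breaking yields a lensless $G'$ with $H\le G'<G$ \emph{is} the proposition, restated in medial-graph language; you assert it and defer it, whereas it is the content of the paper's Lemmas \ref{constructK} and \ref{constructK'} together with a further page of argument. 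A blind attempt that leaves this step as ``the technical heart\ldots I expect to demand the most care'' has a genuine gap, not a routine verification.

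Two concrete reasons the deferred step is nontrivial in the form you state it. First, the alternative crossing need not be breakable in $\mc{M}(G)$ itself: other geodesics may pass through the offending lens, and breaking either of the two crossings that bound it can create new lenses or dividing lines. The paper must first normalize by motions --- clearing all crossings of the family $\mc{F}$ out of the sector $apd$ (Lemma \ref{constructK}), then emptying the triangle $v_{1}pw_{1}$ (Lemma \ref{constructK'}) --- so that in the transformed graph $T=\mc{E}(K')$ the removal of $e$ produces a genuine parallel pair (or series pair), and only then does the candidate alternative edge ($AB$, resp.\ the Y-edge $BZ$) even exist; it also checks that these motions avoid the crossing at $p$, so that they commute with the removal of $e$. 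Second, exhibiting a lensless $G'$ of the right edge count is not enough: you must certify $[H]<[G']<[G]$, in particular $G'\not\sim H$ and $G'>H$. The paper does this with a connection argument: it picks $(P;Q)\in\pi(T)-\pi(H)$, shows any connection from $P$ to $Q$ uses at most one of the two edges of the triangle (or Y), and removes the unused one, so the connection survives in $G'$ (whence $G'\not\sim H$ by Proposition \ref{equalconnectionset}) while one further removal from $G'$ still yields a graph equivalent to $H$. Neither ingredient appears in your sketch. (A minor slip elsewhere: the chain from $G$ is only guaranteed to end at \emph{some} $H'\sim H$, not at the critical $H$, so ``the length is forced to equal $r$'' is unjustified as stated --- though harmless, since you only use $G_{1}$.)
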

\begin{proof}

First, suppose that $G$ and $H$ are critical graphs such that deleting or contracting an edge of $G$ yields $H$. Then, if $[G]>[X]>[H]$ for some circular planar graph $X$, some sequence of at least two deletions or contractions of $G$ yields $H'\sim H$. It is clear that $H'$ has fewer edges than $H$, contradicting Theorem \ref{criticalcharacterization}. It follows that $[G]$ covers $[H]$.

We now proceed to prove the opposite direction. Fix a critical graph $G$, and let $e$ be an edge of $G$ that can be deleted or contracted in such a way that the resulting graph $H$ is not critical. By way of Lemmas \ref{constructK} and \ref{constructK'}, we will first construct $T\sim G$ with certain properties, then, from $T$, construct a graph $G'$ such that $[G]>[G']>[H]$. The desired result will then follow: indeed, suppose that $[G]$ covers $[H]$ and $G\in [G]$ is critical. Then, there exists an edge $e\in G$ which may be contracted or deleted to yield $H\in[H]$, and it will also be true that $H$ is critical.

First, we translate to the language of medial graphs. When we break a crossing in the medial graph $\mc{M}(G)$, we may create lenses that must be resolved to produce a lensless medial graph. Suppose that our deletion or contraction of $e\in G$ corresponds to breaking the crossing between the geodesics $ab$ and $cd$ in $\mc{M}(G)$, where the points $a,c,b,d$ appear in clockwise order on the boundary circle. Let $ab\cap cd=p$, and suppose that when the crossing at $p$ is broken, the resulting geodesics are $ad$ and $cb$.

For what follows, let $\mc{F}=\{f_{1},\ldots,f_{k}\}$ denote the set of geodesics $f_{i}$ in $\mc{M}(G)$ such that $f_{i}$ intersects $ab$ between $a$ and $p$, and also intersects $cd$ between $d$ and $p$. We now construct $T$ in two steps.

\begin{lemma}\label{constructK}
There exists a lensless medial graph $K$ such that: 
\begin{itemize}
\item $K$ is equivalent to $\mc{M}(G)$,
\item geodesics $ab$ and $cd$ still intersect at $p$, and breaking the crossing at $p$ to give geodesics $ad$, $bc$ yields a medial graph equivalent to $\mc{M}(H)$, and
\item for $f_i,f_j\in\mc{F}$ which cross each other, the crossing $f_{i}\cap f_{j}$ lies outside the sector $apd$.
\end{itemize}
\end{lemma}
\begin{proof}
The proof is similar to that of \cite[Lemma 6.2]{curtis}. Start with the medial graph $\mc{M}(G)$, and for each $f_i\in\mc{F}$, let $v_i=f_{i}\cap ab$. Also, for each $f_i\in\mc{F}$ which intersects another $f_j\in\mc{F}$ in the sector $apd$, let $D_i$ be the closest point of intersection of some $f_{j}$ along $f_{i}$ to $v_i$ in $apd$. Let $D$ be the set of $D_i$.

If $D$ is empty, there is nothing to check, so we assume that $D$ is nonempty. Then, consider the subgraph of $\mc{M}(G')$ obtained by restricting to the geodesics in $\mc{F}$, along with $ab$ and $cd$. In this subgraph, choose a point $D_i\in D$ such that the number $r$ of regions within the configuration formed by $f_i$, $f_j$, and $ap$ is a minimum, where $f_j$ denotes the other geodesic passing through $D_i$. 

We claim that $r=1$: assume otherwise. Then, there exists a geodesic $f_k$ intersecting $f_j$ between $v_j$ and $D_{i}$ and intersecting $ap$ between $v_i$ and $v_j$, as, by definition, $D_i$ is the first point of intersection on $f_i$ after $v_i$. However, the area enclosed by $f_k$, $f_j$, and $ap$ a number of regions strictly fewer than $r$. Hence, we could instead have chosen the point $D_j\in D$, with $D_{j}\neq D_{i}$, contradicting the minimality.

It follows that $ap$, $f_i$, and $f_j$ form a triangle, and thus the crossing at $D_{i}$ may be moved out of sector $apd$ by a motion. Iterating this process, a finite number of motions may be applied in such so that no $f_i,f_j\in\mc{F}$ intersect in the sector $apd$. After applying these motions, we obtain a medial graph $K$ equivalent to $\mc{M}(G')$ satisfying the first and third properties.

It is easy to see that $K$ also satisfies the second property, as none of the motions involved use the crossing at $p$. Thus, if we translate the sequence of motions into Y-$\Delta$ transformations on circular planar graphs, starting with $G$, no Y-$\Delta$ transformation is applied involving the edge $e$ corresponding to $p$. Thus, deleting or contracting $e$ commutes with the Y-$\Delta$ transformations we have performed.
\end{proof}

It now suffices to consider the graph $K$. Let $f_1\in\mc{F}$ be the geodesic intersecting $ab$ at the point $v_{1}$ closest to $p$, and let $w_1=f_1\cap cd$.

\begin{lemma}\label{constructK'}
There exists a lensless medial graph $K'\sim K$, such that:
\begin{itemize}
\item geodesics $ab$ and $cd$ intersect at $p$, as before, and breaking the crossing at $p$ to give geodesics $ad$, $bc$ yields a medial graph equivalent to $\mc{M}(H)$, and
\item No other geodesic of $K'$ enters the triangle with vertices $v_1,p,w_1$.
\end{itemize}
\end{lemma}
\begin{proof}
We first consider the set $\mc{X}$ of geodesics that only intersect $cd$ and $f_1$. With an argument similar to that of Lemma \ref{constructK}, we may first apply motions so that any intersection of two elements $\mc{X}$ occurs outside the triangle with vertices $v_{1},p,w_{1}$. Then, we may apply motions at $w_{1}$ to move each of the geodesics in $\mc{X}$ outside of this triangle, so that they intersect $f_1$ in the sector $bpd$. After applying similar motions to the set of geodesics $\mc{Y}$ intersecting $ab$ and $f_1$, we have $K'$. The fact that $K'$ satisfies the first desired property follows from the same argument as that of Lemma \ref{constructK'}.
\end{proof}

We are now ready to finish the proof of Proposition \ref{coverrelation}. Let $T=\mc{E}(K')$ (see Theorem \ref{critmedbij}). Then, in $T$, because of the properties of $K'$, contracting $e$ to form the graph $H'\sim H$ forms a pair of parallel edges. Replacing the parallel edges with a single edge gives a circular planar graph $H''$, which is still equivalent to $H$. Suppose that $e$ has endpoints $B,C$ and the edges in parallel are formed with $A$. Then, we have the triangle $ABC$ in $T$.

Write $S=\pi(T)$ (see Definition \ref{connection}) and $S'=\pi(H')$. Because $T$ is critical, $S'\neq S$, so fix $(P;Q)\in S-S'$. Then, it is straightforward to check that any connection $\mc{C}$ between $P$ and $Q$ must have used both $B$ and $C$, but cannot have used the edge $BC$. Furthermore, $\mc{C}$ can use at most one of the edges $AB,AC$. Indeed, if both $AB,AC$ are used, they appear in the same path $\gamma$, but replacing the two edges $AB,AC$ with $BC$ in $\gamma$ gives a connection between $P$ and $Q$, but we know that no such connection can use $BC$, a contradiction. Without loss of generality, suppose that $\mc{C}$ does not use $AB$. Then, deleting $AB$ from $T$ yields a graph $G'$ with $(P;Q)\in G'$, hence $G'$ is not equivalent to $H$. However, it is clear that deleting $BC$ from $G'$ yields $H''\sim H$. It follows, then, that in the case in which $e$ is contracted, we have $G'$ such that $[G]>[G']>[H]$, and hence $[G]$ does not cover $[H]$.

For the case in which we delete $e=ZC$ in $T$, the argument is similar. Deleting $e$ in $T$ yields a graph $H'\sim H$ with two edges $AZ,ZB$ in series, which implies that $T$ has a Y with vertices $A,B,C,Z$, where $Z$ is the middle vertex. It is easy to see that $Z$ is not a boundary vertex. Then, replacing $AZ,ZB$ in $H'$ with the edge $AB$ yields a graph $H''\sim H$. There exists a circular pair $(P;Q)\in\pi(T)-\pi(H)$, so we have a connection $\mc{C}$ between $P$ and $Q$ using the edge $ZC$. Then, $\mc{C}$ also must use exactly one of $AZ$ and $BZ$: wthout loss of generality, assume it is $AZ$. Contracting $BZ$ in $T$ to yield the graph $G'$ leaves $\mc{C}$ intact, and deleting $ZC$ from $G'$ gives $H''\sim H$. As before, we thus have $[G]>[G']>[H]$, so we are done.
\end{proof}

\begin{theorem}\label{graded}
$EP_{n}$ is graded by number of edges of critical representatives.
\end{theorem}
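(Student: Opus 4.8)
The plan is to show that $EP_{n}$ is graded by exhibiting a rank function and verifying the two defining properties of a graded poset: that all maximal chains between comparable elements have the same length, and equivalently that this length is computed by the rank function. I would take the rank of an equivalence class $[G]$ to be the number of edges of a critical representative $G$; this is well-defined because, by Theorem \ref{criticalcharacterization}, all critical representatives of a given equivalence class have the same (minimal) number of edges, and by Theorems \ref{equivalentconnections} and Proposition \ref{motionsenough} critical representatives are related by Y-$\Delta$ transformations, which preserve edge count.

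The heart of the argument is already assembled in Proposition \ref{coverrelation}. First I would observe that whenever $[G]$ covers $[H]$, a critical representative of $[H]$ is obtained from a critical representative of $[G]$ by a single contraction or deletion, each of which removes exactly one edge; hence the rank drops by exactly one across every cover relation. This immediately gives that the rank function is strictly order-preserving and changes by one on covers. To conclude gradedness it then suffices to check that $EP_{n}$ has a minimum element (the empty graph on $n$ boundary vertices, of rank $0$) and that the poset is ranked in the sense that every element $[G]$ of rank $r$ sits atop a saturated chain of length $r$ down to the minimum. This last point follows by induction: given critical $G$ with $r \ge 1$ edges, criticality guarantees some edge whose removal changes $\pi(G)$, and one shows such a removal can be taken to land in a covered class, producing a critical graph of rank $r-1$.

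The key step I expect to require the most care is verifying the converse direction already handled in Proposition \ref{coverrelation}: namely that a cover relation $[G] \gtrdot [H]$ is always realized by removing a \emph{single} edge from a critical $G$ to reach a \emph{critical} $H$, rather than by a removal that produces a non-critical graph requiring further reductions. Proposition \ref{coverrelation} establishes exactly this through the medial-graph constructions of Lemmas \ref{constructK} and \ref{constructK'}, showing that if contracting or deleting an edge $e$ of critical $G$ yields a non-critical $H$, then one can interpose a class $[G']$ with $[G] > [G'] > [H]$, so no such edge corresponds to a cover. Granting Proposition \ref{coverrelation}, the main obstacle is essentially resolved, and the remaining work is purely formal.

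Concretely, the proof reduces to the following synthesis. Let $\rho([G])$ denote the number of edges of a critical representative. By Proposition \ref{coverrelation}, every cover relation $[G] \gtrdot [H]$ satisfies $\rho([G]) = \rho([H]) + 1$, since it is realized by a single edge-removal between critical graphs. Consequently, for any comparable pair $[H] \le [G]$, every saturated chain from $[H]$ to $[G]$ has length exactly $\rho([G]) - \rho([H])$, because each cover step decrements $\rho$ by one and $\rho$ is integer-valued and monotonic. This is precisely the statement that $EP_{n}$ is graded with rank function $\rho$, completing the proof of Theorem \ref{graded}.
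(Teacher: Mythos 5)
Your proof is correct and takes essentially the same approach as the paper: both define the rank of $[G]$ as the edge count of a critical representative (well-defined by Theorem \ref{criticalcharacterization}) and then invoke Proposition \ref{coverrelation} to see that every cover relation is realized by a single contraction or deletion between critical graphs, so the rank drops by exactly one across each cover. Your additional remarks about the minimum element and saturated chains are harmless elaborations of the same argument.
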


\begin{proof}
First, by Theorem \ref{criticalcharacterization}, note that for any $[G]\in EP_{n}$, all critical representatives of $[G]$ have the same number of edges. Now, we need to show that if $[G]$ covers $[H]$, the number of edges in a critical representative of $[G]$ is one more than the same number for $[H]$. Let $G\in[G]$ be critical. By Proposition \ref{coverrelation}, an edge of $G$ may be contracted or deleted to yield a critical representative $H\in[H]$, and it is clear that $H$ has one fewer edge than $G$.
\end{proof}

\begin{defn}
For all non-negative integers $r$, denote the set of elements of $EP_{n}$ of rank $r$ by $EP_{n,r}$.
\end{defn}

Let us pause to point out connections between $EP_{n}$ and two other posets, interpreting $EP_{n}$ as the graded poset of lensless medial graphs with the covering relation arising from the legal breakings of crossings that preserve lenslessness. 

First, $EP_{n}$ bears a strong resemblance to the symmetric group $S_{n}$ under the (strong) Bruhat order, as follows. Associated to each permutation $\sigma\in S_{n}$, there is a lensless wiring diagram, with $n$ wires connecting two parallel lines $\ell_{1},\ell_{2}$, both with marked points $1,2,\ldots,n$. For each $i\in[n]$, there is a wire joining the point $i\in\ell_{1}$ to $\sigma(i)\in\ell_{2}$. Then, the covering relation in $S_{n}$ is exactly that of $EP_{n}$, except for the fact that each crossing can be broken in exactly one legal way.

Also, consider the poset $W_{n}$ of equivalence classes of lensless wiring diagrams, not necessarily full. Here, the equivalence relation here is generated by motions and resolution of lenses. The order relation arises from breaking of crossings, in a similar way to $EP_{n}$, but we are no longer concerned about the creation of dividing lines. $W_{n}$ can be proven to be graded in a similar way to the proof of Theorem \ref{graded}, and it is furthermore not difficult to check that $EP_{n}$ is in fact an interval in $W_{n}$.

\subsection{Toward Eulerianness}

In this section, we discuss the following conjecture, which we make more detailed in \ref{bigconj}.

\begin{conjecture}\label{eulconj}
$EP_{n}$ is Eulerian.
\end{conjecture}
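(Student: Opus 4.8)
The plan is to first put the conjecture in standard form. Recall that a graded poset $P$ with minimum $\hat 0$ and maximum $\hat 1$ is \emph{Eulerian} if its M\"obius function satisfies $\mu(x,y)=(-1)^{\mathrm{rk}(y)-\mathrm{rk}(x)}$ for every $x\le y$. So the first step is to verify that $EP_n$ is bounded: the minimum is the empty graph (the $n$ isolated boundary vertices, with $\pi=\emptyset$), and the maximum is the unique class of top rank $\binom{n}{2}$, whose medial graph is the full lensless wiring diagram in which every pair of the $n$ wires crosses exactly once (so that every circular pair is connected, and every circular planar graph is reachable from it by deletions and contractions). Gradedness is Theorem \ref{graded}, so it remains only to verify the M\"obius condition on every interval $[H,G]$.

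The route I would take mirrors the proof that the strong Bruhat order on $S_n$ is Eulerian (Verma; Bj\"orner--Wachs), exploiting the resemblance noted after Theorem \ref{graded}. Concretely, I would establish two structural properties and then invoke a general theorem. First, \textbf{thinness}: every length-two interval $[H,G]$ is a \emph{diamond}, i.e.\ contains exactly two elements $K$ with $H<K<G$. By Proposition \ref{coverrelation} this is a statement about breaking two crossings in a lensless medial graph $\mc{M}(G)$, and one must show there are exactly two intermediate lensless graphs. The analysis is local, depending only on the relative position of the two crossings and of the geodesics running between them (crossings on disjoint geodesic pairs; crossings sharing one geodesic; and so on), and I expect a finite case check paralleling the diamond structure of covers in Bruhat order.

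Second, \textbf{shellability}: I would construct an EL-labeling of $EP_n$. Label each cover $H\lessdot G$ by the crossing of $\mc{M}(G)$ that is broken, totally ordered by a fixed rule (for instance, lexicographically by the unordered pair of medial boundary endpoints of the two geodesics meeting at that crossing). The goal is to show that every interval has a unique increasing maximal chain; here the lens-resolution mechanics of Lemmas \ref{constructK}--\ref{constructK'} should control how breakings commute, exactly as the subword structure controls EL-shellability of Bruhat order. Granting thinness and shellability, the order complex of each open interval $(H,G)$ is a thin shellable complex, hence a sphere of dimension $\mathrm{rk}(G)-\mathrm{rk}(H)-2$, and $\mu(H,G)=(-1)^{\mathrm{rk}(G)-\mathrm{rk}(H)}$ follows from Bj\"orner's theorem that a thin, shellable, graded bounded poset is Eulerian.

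The main obstacle is the shellability step: producing an EL-labeling genuinely compatible with the constraint that all breakings remain lensless and create no dividing line. Breaking one crossing can force a cascade of lens resolutions (as in Lemmas \ref{constructK}--\ref{constructK'}), so a naive labeling by crossings need not have the unique-increasing-chain property, and the label order must be chosen to track how these forced resolutions interact. A secondary difficulty is that $EP_n$ is only an \emph{interval} in the larger poset $W_n$ of not-necessarily-full wiring diagrams; restricting a labeling of $W_n$ is attractive, but one must then check that fullness is preserved along increasing chains. If a clean EL-labeling proves elusive, a fallback is a direct inductive computation of $\mu(H,G)$ from thinness together with a rank-reversing involution on $(H,G)$ pairing elements of even and odd rank, though constructing such an involution geometrically appears at least as hard as the labeling.
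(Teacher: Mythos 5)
You have not proved the statement, and neither does the paper: in the paper this is an open conjecture, not a theorem. What the paper actually establishes is precisely the first half of your program. Your thinness step is the paper's Lemma \ref{diamond} (every length-two interval is a diamond, proved via Proposition \ref{coverrelation} by a finite case analysis of pairs of crossing breakings, Figures \ref{breakseq6} and \ref{crossingscommute}), and your shellability-plus-Bj\"orner step is verbatim the paper's Conjecture \ref{bigconj}: an L-labeling would give shellability of the order complex by \cite[Theorem 3.4, Theorem 5.4(C)]{bjornerwachs}, and then Lemma \ref{diamond} together with \cite[Proposition 4.7.22]{bjorner} would yield sphericity and Eulerianness. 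So your proposal reconstructs the paper's intended strategy exactly, but the load-bearing step remains unproved on both sides: the paper reports that no L-labeling of $EP_{n}$ is known for $n\ge 4$, and that Eulerianness has only been verified computationally for $n\le 7$.

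The genuine gap is therefore the EL-labeling itself, and it is worth being precise about why your candidate labeling does not obviously work. Labeling a cover $H\lessdot G$ by ``the crossing of $\mc{M}(G)$ that is broken'' is not as well-behaved as the subword labeling in Bruhat order, because after a breaking one may be forced into a cascade of lens resolutions (this is exactly what Lemmas \ref{constructK} and \ref{constructK'} are managing in the proof of Proposition \ref{coverrelation}): the lensless representative of the lower element is obtained only after motions and resolutions, so ``the same crossing'' is not stably identified along a maximal chain, and the label set of an interval is not fixed the way a set of inversions or subword positions is. You acknowledge this obstacle, but acknowledging it does not discharge it; without a proof of the unique-increasing-chain property for your label order, the argument stops at thinness, and thinness alone only verifies the M\"obius condition $\mu(x,y)=(-1)^{\mathrm{rk}(y)-\mathrm{rk}(x)}$ on intervals of length two, not on longer intervals. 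Your fallback (a rank-reversing involution on open intervals) is likewise not constructed. In short: correct framing, correct reduction, same route as the paper --- but the statement remains a conjecture because the step you flag as ``the main obstacle'' is precisely the step nobody has carried out.
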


We first prove that all closed intervals of length 2 in $EP_{n}$ have four elements, which reduces the Eulerianness of $EP_{n}$ to lexicographic shellability.

\begin{lemma}\label{diamond}
Suppose $x\in EP_{n,r-1},z\in EP_{n,r+1}$ with $x<z$. Then, there exist exactly two $y\in EP_{n,r}$ with $x<y<z$.
\end{lemma}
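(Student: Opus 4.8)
The plan is to pass to medial graphs and count the chains $z \gtrdot y \gtrdot x$ of single crossing-breaks. Fix a critical representative of $z$, that is, a lensless full wiring diagram $W = \mc{M}(Z)$; by Theorem~\ref{critmedbij} equivalence classes of critical graphs are exactly motion-classes of such diagrams, and by Proposition~\ref{coverrelation} each downward cover is a single \emph{legal} breaking of a crossing whose result is again lensless. The key structural remark is that breaking a crossing $p = w_1 \cap w_2$ is a purely local move: it deletes only the crossing at $p$ and leaves every other crossing of $W$ untouched, so a cover lowers the crossing number (which equals rank, by Theorem~\ref{graded}) by exactly one and merely reroutes the two wires through $p$, replacing the matched pair $\{w_1, w_2\}$ of the underlying matching $M_z$ by a new pair $\{w_1', w_2'\}$. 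Thus a length-two descent $z \gtrdot y \gtrdot x$ amounts to performing two successive legal breaks, and since the diagram $\mc{M}(Y)$ produced by the first break is itself lensless, both breaks can be read off inside a single controlled diagram.

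First I would pin down the candidate intermediates combinatorially, working with the matchings $M_z, M_x$ of the $2n$ medial boundary vertices. Each break is a local rerouting of at most four endpoints, so after two breaks the wires of $z$ that differ from those of $x$ involve either four distinct wires (two disjoint crossings) or three wires (the two crossings share a wire); since a lensless diagram has each pair of wires crossing at most once, the two broken crossings cannot lie on the same pair of wires. In the disjoint case $M_z$ and $M_x$ differ in two independent pairs $\{w_1,w_2\}$ and $\{w_3,w_4\}$, and any $y$ with $x<y<z$ must agree with both matchings off these wires while being one break from each; this forces $M_y$ to be obtained by resolving exactly one of the two crossings, giving two candidates, one with $\{w_1,w_2\}$ resolved and one with $\{w_3,w_4\}$ resolved, and no others. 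The shared-wire case is an analogous but more delicate local computation on the three wires $w_1,w_2,w_3$, again yielding at most two candidate matchings strictly between $M_z$ and $M_x$.

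It then remains to verify that both candidates are \emph{genuinely} realized, i.e.\ that each arises from a legal, lensless- and fullness-preserving break of $z$ and sits strictly between $x$ and $z$. Here I would first normalize $W$ by motions, exactly as in Lemmas~\ref{constructK} and~\ref{constructK'}, so that the two crossings to be broken are swept free of obstructing geodesics and separated from each other. With the configurations thus isolated, breaking either crossing first is a local move disjoint from the other, so neither creates a lens nor a dividing line, and the two breaks commute down to the common graph $x$. This produces two distinct lensless full diagrams of rank $r$, hence two distinct $y$ with $x < y < z$, while the combinatorial step of the previous paragraph shows there are no further intermediates.

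The step I expect to be the main obstacle is precisely this last verification in the shared-wire case: unlike the disjoint case, the two breaks act on a common wire, so breaking them in the two orders is not manifestly symmetric, and one must rule out that breaking one crossing first creates a lens (which would force a lens resolution and drop the rank by more than one) or a dividing line (an illegal break). Controlling this requires choosing the normalized representative so that, in a neighborhood of the shared wire, both orders of breaking remain legal and lensless; the motion arguments of Lemmas~\ref{constructK} and~\ref{constructK'}, which already isolate a single geodesic and push crossings out of a prescribed sector, are exactly the tool I would adapt to carry this out.
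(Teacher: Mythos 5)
Your overall framing (covers are legal lensless-preserving breaks of crossings; classify the candidate intermediate matchings, then verify realizability) matches the paper's proof, but there is a genuine gap in your shared-wire case, and it is exactly where the real content of the lemma lives. You claim the local computation on three wires yields ``at most two candidate matchings strictly between $M_z$ and $M_x$.'' That is false at the matching level: with $M_z=\{ab,cd,ef\}$ and $M_x=\{ac,de,bf\}$ there are \emph{three} intermediate matchings reachable by one break from each side, namely $\{ac,bd,ef\}$ (the given one), $\{ab,cf,de\}$, and $\{ae,cd,bf\}$ (keep $bf$ and break $ae\times cd$ to get $\{ac,de\}$, so the last one is a perfectly good combinatorial candidate). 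The paper's proof must therefore do something your plan omits: it shows that the path through $\{ae,cd,bf\}$ \emph{always} produces a lens, no matter how the wires are positioned, so that break drops the rank by more than one and this candidate is never a valid $y$; in the other essentially different configuration of the three wires (the paper's subcase B) the third candidate is not even reachable because the required crossing does not exist in $z$. Your proposal cannot see either phenomenon, because which candidates are geometrically available depends on the actual configuration of crossings among the three wires, not just on the matchings --- this is why the paper enumerates the essentially different starting configurations rather than arguing purely combinatorially. Had you executed your plan, you would find three candidates and, without the lens-exclusion argument, would be unable to conclude that exactly two survive.

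A secondary weakness: your proposed normalization, sweeping the two crossings free of obstructing geodesics \`a la Lemmas \ref{constructK} and \ref{constructK'} so that ``breaking either crossing first is a local move disjoint from the other,'' does not suffice even for the verification step. In the shared-wire case the two crossings lie on a common geodesic and cannot be separated, and legality (creating no dividing line) is a global condition on boundary vertices that motions cannot push away. The paper instead deduces legality of the second path from legality of the given one by a direct check in each enumerated configuration (its Figures of starting configurations), which is the honest substitute for the locality you are assuming.
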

\begin{proof}
Take $x$ and $z$ to be (equivalence classes of) lensless medial graphs. By Theorem \ref{graded}, $x$ may be obtained from $z$ by a sequence of two legal resolutions of crossings. Suppose that $x$ contains the intersecting wires (labeled by their endpoints) $ab$ and $cd$, whose intersection is broken (legally, that is, without creating dividing lines) by instead taking wires $ac,bd$. There are two cases for the next covering relation, from which $x$ results: either one of $ac,bd$ is involved, or a crossing between two new wires is broken.

In the first case, suppose that a crossing between $bd$ and $ef$ is broken to give wires $bf,de$. Up to equivalence under motions, we have one of the two configurations in Figure \ref{breakseq}, constituting subcases A and B. We need to show that, in both cases, there is exactly one other sequence of two legal breakings of crossings, starting from $z$, that gives $x$.

\begin{figure}
\begin{center}
\includegraphics[scale=0.4]{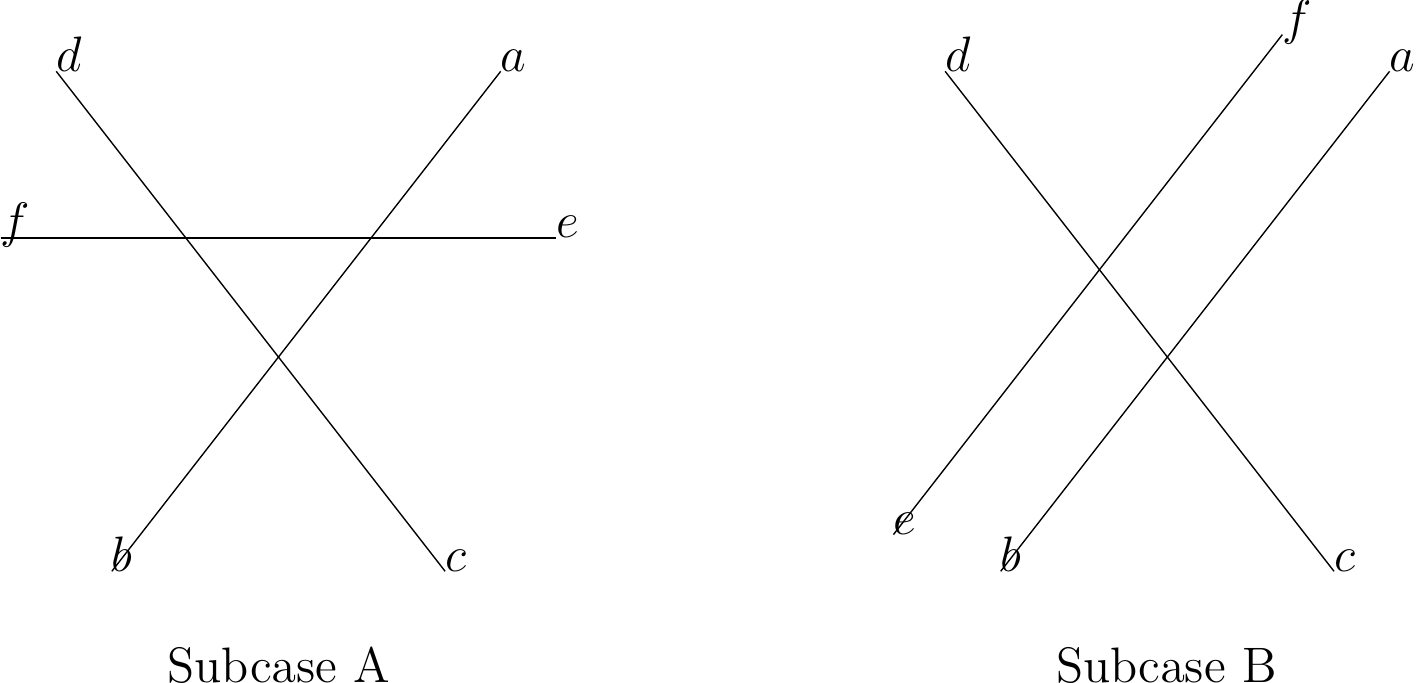}
\caption{Possible starting configurations for two breakings using six medial boundary vertices.}\label{breakseq6}
\end{center}
\end{figure}

In subcase A, there are, at first glance, two possible other ways to get from the set of wires $\{ab,cd,ef\}$ to the set $\{ac,de,bf\}$: the first is through $\{ab,cf,de\}$ and the second is through $\{ae,cd,bf\}$. However, note that the latter case produces a lens, regardless of how the wires are initially positioned to cross each other. Furthermore, assuming the legality of the sequence of breakings $\{ab,cd,ef\}\rightarrow\{ac,bd,ef\}\rightarrow\{ac,bf,de\}$, it is straightforward to check that $\{ab,cd,ef\}\rightarrow\{ab,cf,de\}\rightarrow\{ae,cd,bf\}$ is also a legal sequence of breakings. In subcase B, it is clear that the only other way to get from $z$ to $x$ is through $\{ab,cf,de\}$, and indeed, it is again not difficult to check that we get legal resolutions here.

Now, suppose instead that we have the legal sequence of resolutions 
\begin{equation}\label{breakseq}
\{ab,cd,ef,gh\}\rightarrow\{ac,bd,ef,gh\}\rightarrow\{ac,bd,eg,fh\}
\end{equation}
The only other possible way to get from $z$ to $x$ is through $\{ab,cd,eg,fh\}$. Here, there are a number of cases to check in order to verify legality of the sequence of two breakings involved. The essentially different starting configurations are enumerated in Figure \ref{crossingscommute}. In each, one may check that
\begin{equation}
\{ab,cd,ef,gh\}\rightarrow\{ab,cd,eg,fh\}\rightarrow\{ac,bd,eg,fh\}
\end{equation}
is a sequence of legal breakings of which does not create lenses, which will be a consequence of the fact that the same is true of (\ref{breakseq}). The details are omitted.
\end{proof}

\begin{figure}
\begin{center}
\includegraphics[scale=0.7]{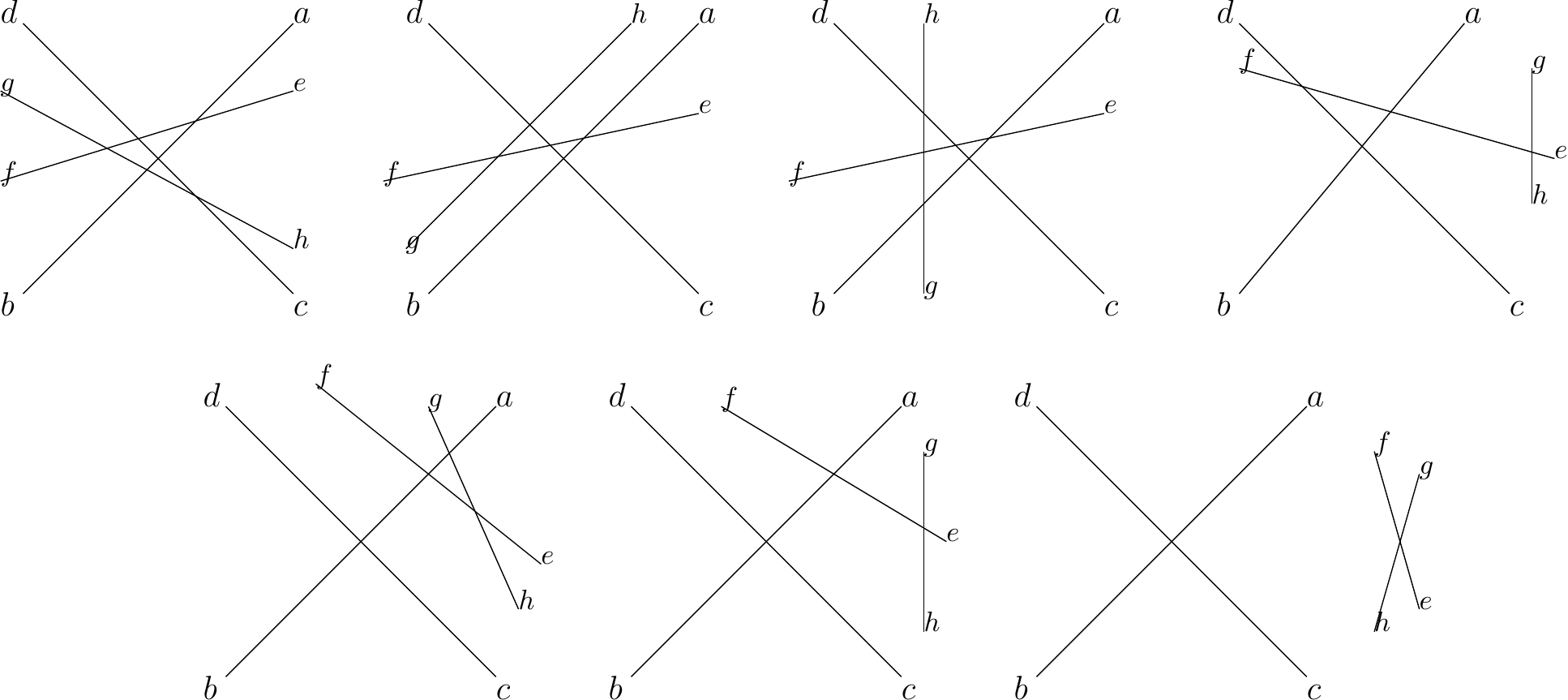}
\caption{Possible starting configurations for two breakings using eight medial boundary vertices.}\label{crossingscommute}
\end{center}
\end{figure}

\begin{conjecture}\label{bigconj}
$EP_{n}$ is lexicographically shellable, and hence Cohen-Macaulay, spherical, and Eulerian.
\end{conjecture}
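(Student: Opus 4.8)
The plan is to establish the conjecture by exhibiting an edge-lexicographic (EL) labeling of the Hasse diagram of $EP_{n}$, and then to invoke the standard machinery of lexicographic shellability together with the thinness already recorded in Lemma \ref{diamond}. By the theory of lexicographic shellability of Bj\"orner and Wachs, an EL-labeling of a bounded graded poset $P$ implies that the order complex of every closed interval is shellable, and hence that $P$ is Cohen--Macaulay; sphericity and Eulerianness then follow once one also knows that $P$ is thin, that is, that every length-two interval is a diamond. Since $EP_{n}$ is graded (Theorem \ref{graded}) and bounded, with $\hat{0}$ the edgeless diagram and $\hat{1}$ the top well-connected diagram, and since Lemma \ref{diamond} furnishes exactly the diamond property, the whole conjecture reduces to the construction and verification of a single EL-labeling.

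First I would work in the medial model, representing each element of $EP_{n}$ as a motion-equivalence class of full lensless wiring diagrams (Theorem \ref{critmedbij}). By Proposition \ref{coverrelation}, every cover relation $[G']\lessdot[G]$ is realized by a legal breaking of a crossing in a representative of $\mathcal{M}(G)$, possibly followed by forced resolutions of the lenses thereby created. I would fix once and for all a linear order on the $2n$ medial boundary vertices, and an induced total order on the set of potential crossings (say, lexicographically by the unordered pair of geodesic endpoints involved), and then label each cover by the minimal crossing whose breaking, after resolving induced lenses, effects the passage from $[G]$ down to $[G']$. The content of the EL condition is then twofold: that each interval $[x,z]$ admits a \emph{unique} maximal chain along which the labels strictly increase, and that this chain is lexicographically first among all maximal chains of the interval.

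The verification of the EL condition is where the real work lies, and where I expect the main obstacle to reside. Locally, Lemma \ref{diamond} and the case analysis summarized in Figures \ref{breakseq6} and \ref{crossingscommute} already encode the crucial commutation relations: whenever two successive breakings can be performed in either order, the two middle elements of the resulting diamond are distinguished, and the labeling should be arranged so that exactly one of the two orders is increasing. The strategy is then to promote these local diamond relations to a global straightening argument: beginning from an arbitrary maximal chain of $[x,z]$, one repeatedly applies diamond moves to sort the label sequence into increasing order (proving existence), and one uses the unique increasing traversal of each diamond to deduce uniqueness and lexicographic minimality. The genuine difficulty is that breaking a crossing need not be purely local: it can create lenses whose resolution alters which crossings are present elsewhere in the diagram, so the naive ``label by the broken crossing'' rule must first be shown to be well defined and compatible with motions along entire chains, not merely in the length-two intervals of Lemma \ref{diamond}. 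I would attempt to control these induced resolutions either by choosing the crossing order compatibly with a sweep of the disk, so that resolutions only ever involve crossings larger than the one being broken, or by passing to the larger poset $W_{n}$ of all lensless wiring diagrams, in which no fullness constraint intervenes, establishing EL-shellability there and then exploiting the fact, noted above, that $EP_{n}$ is an interval in $W_{n}$ (and that intervals of EL-shellable posets inherit EL-shellings).

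Finally, once the labeling is in hand the remaining implications are formal. EL-shellability yields shellability of $\Delta(x,z)$ for every interval, hence Cohen--Macaulayness. Because Lemma \ref{diamond} shows $EP_{n}$ is thin, each open interval's order complex is a shellable pseudomanifold without boundary, hence a sphere of the expected dimension (Danaraj--Klee); this forces $\mu(x,z)=(-1)^{\ell(x,z)}$ for all $x\le z$, so $EP_{n}$ is Eulerian, and the order complex of $EP_{n}$ itself is spherical.
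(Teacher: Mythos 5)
You should note first that the statement you were asked to prove is a \emph{conjecture} in the paper, not a theorem: the authors prove nothing here beyond the reduction you also describe. They record that an L-labeling of $EP_{n}$ would yield shellability of the order complex and hence Cohen--Macaulayness by \cite[Theorem 3.4, Theorem 5.4(C)]{bjornerwachs}, and that, combined with Lemma \ref{diamond}, \cite[Proposition 4.7.22]{bjorner} would then give sphericity and Eulerianness; and they state explicitly that no L-labeling of $EP_{n}$ is known for $n\ge 4$ (Eulerianness has been verified only computationally, for $n\le 7$). Your derivation of the consequences from an EL-labeling plus thinness is correct and coincides with the paper's remarks. But your proposal is not a proof: its entire mathematical content is the construction and verification of the labeling, and on that point you offer only a strategy, conceding yourself that ``the real work'' and ``the main obstacle'' lie exactly there. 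That is precisely the open problem, so the gap is not incidental --- it is the whole statement.

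Beyond incompleteness, the specific labeling rule you propose has an unresolved well-definedness problem. Elements of $EP_{n}$ are motion-equivalence classes of full lensless wiring diagrams (Theorem \ref{critmedbij}), and Proposition \ref{coverrelation} only guarantees that each cover is realized by \emph{some} legal breaking in \emph{some} critical representative; a single cover can typically be realized by breaking different crossings in different motion-equivalent representatives, and motions change which geodesic pairs cross, so ``the minimal crossing whose breaking effects the passage'' depends on the chosen representative unless you prove an invariance statement, which you do not supply. Similarly, as you acknowledge, a breaking can create lenses whose forced resolutions remove further crossings elsewhere, so label sequences along chains are not controlled by the length-two analysis of Lemma \ref{diamond}; promoting the local commutations of Figures \ref{breakseq6} and \ref{crossingscommute} to a global straightening of arbitrary maximal chains, with a \emph{unique} increasing chain in every interval, is exactly what the EL-condition demands and is nowhere carried out. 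Finally, your fallback --- first EL-shelling $W_{n}$ and then restricting to $EP_{n}$ as an interval --- does not close the gap, because EL-shellability of $W_{n}$ is equally open (the paper only \emph{conjectures} that even $W_{n}\cup\{\widehat{0}\}$ is Eulerian); the maneuver relocates the difficulty rather than resolving it.
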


We refer the reader to \cite{bjornerwachs} for definitions. Indeed, if we have an L-labelling for $EP_{n}$, it would follow that the order complex $\Delta(EP_{n})$ is shellable and thus Cohen-Macaulay (see \cite[Theorem 3.4, Theorem 5.4(C)]{bjornerwachs}). By Lemma \ref{diamond}, \cite[Proposition 4.7.22]{bjorner} would apply, and we would conclude that $EP_{n}$ is spherical and hence Eulerian. 

Through \cite{sage}, $EP_{n}$ has been verified to be Eulerian for $n\le 7$, and the homology of $EP_{n}-\{\widehat{0},\widehat{1}\}$ agrees with that of a sphere of the correct dimension, $\binom{n}{2}-2$, for $n\le 4$. On the other hand, no L-labeling of $EP_{n}$ is known for $n\ge 4$.

It is also worth mentioning the following conjecture concerning the poset $W_{n}$ (defined at the end of the previous section), which implies Conjecture \ref{eulconj}.

\begin{conjecture}
The poset $W_{n}\cup\{\widehat{0}\}$, obtained by adjoining a minimal element to $W_{n}$, is Eulerian.
\end{conjecture}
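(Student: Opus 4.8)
The plan is to establish Eulerianness of $W_{n}\cup\{\widehat{0}\}$ by the same route sketched for $EP_{n}$ in Conjecture \ref{bigconj}: show the poset is thin and shellable, and then invoke \cite[Proposition 4.7.22]{bjorner} together with \cite{bjornerwachs} to conclude that it is spherical, hence Eulerian. Working with $W_{n}$ rather than $EP_{n}$ is advantageous precisely because the combinatorics of breaking crossings becomes uniform once we drop the dividing-line constraint. First I would make the combinatorial model explicit: by Proposition \ref{motionsenough}, a motion-equivalence class of a lensless wiring diagram is recorded by the matching it induces on the $2n$ medial boundary vertices, so the elements of $W_{n}$ are exactly the perfect matchings of the $2n$ boundary points, graded by number of crossings. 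Breaking the crossing of two chords $\{a,b\},\{c,d\}$ (appearing in cyclic order $a,c,b,d$) produces one of the two smoothed matchings $\{a,c\},\{b,d\}$ or $\{a,d\},\{b,c\}$, and a short computation on the six points involved shows that each smoothing lowers the total number of crossings by exactly one. Thus each crossing of a matching $N$ yields precisely two covers below $N$ (both legal, since no dividing-line obstruction arises), every cover relation arises this way, the top $\widehat{1}$ is the antipodal matching $i\leftrightarrow i+n$ (the unique matching in which all $\binom{n}{2}$ pairs of chords cross), and the minimal elements are the noncrossing matchings at rank $0$, below all of which we glue $\widehat{0}$.

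Second, I would prove thinness, i.e.\ that every rank-$2$ interval is Boolean. For a length-two interval $[x,z]$ with $z$ of rank $r+2$ and $x$ of rank $r$, the two smoothings carrying $z$ down to $x$ either break crossings on disjoint sets of chords, in which case they commute and give exactly the two intermediate matchings, or they share a chord, in which case one checks directly over the finitely many configurations (the six- and eight-boundary-vertex pictures of Lemma \ref{diamond}, now with no legality caveat) that there is exactly one alternative order. The intervals meeting $\widehat{0}$ are immediate: $[\widehat{0},M]$ for a rank-$1$ matching $M$ consists of $\widehat{0}$, $M$, and the two smoothings of the single crossing of $M$, a diamond; while $[\widehat{0},M]$ for a noncrossing $M$ is a $2$-chain. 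Hence $W_{n}\cup\{\widehat{0}\}$ is thin.

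Third, and this is the crux, I would attempt to construct an EL-labeling (or a recursive atom ordering) of $W_{n}\cup\{\widehat{0}\}$. The natural candidate labels a cover $N\gtrdot M$ by the crossing that is broken together with which of its two smoothings is taken, with the hope that the totally cyclic structure of the antipodal diagram lets a uniform ``uncross the lexicographically least available crossing'' rule select a unique rising chain in each interval. Producing such a labeling and verifying the exchange condition is exactly where the difficulty lies: the authors note that no L-labeling is even known for the subposet $EP_{n}$, and the intervals of $W_{n}$ do not visibly factor as products of smaller uncrossing posets, so uniqueness of the rising chain would have to be established by a genuinely global argument on matchings rather than by a local recursion. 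I expect this to be the main obstacle.

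As evidence that the conclusion holds, and as the seed of an alternative involutive route, recall that a bounded graded poset is Eulerian iff every nontrivial interval $[x,y]$ satisfies $\sum_{x\le w\le y}(-1)^{\mathrm{rk}(w)}=0$. For the full interval $[\widehat{0},\widehat{1}]=W_{n}\cup\{\widehat{0}\}$ this reduces, after accounting for the rank-$(-1)$ element $\widehat{0}$, to the signed crossing enumeration
\[
\sum_{M}(-1)^{\mathrm{cr}(M)}=1,
\]
a specialization at $q=-1$ of the Touchard--Riordan generating function $\sum_{M}q^{\mathrm{cr}(M)}$ for matchings, which one verifies to hold for all $n$. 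An involutive proof of Eulerianness would amount to a uniform, crossing-parity-reversing involution on each matching interval $[M,N]$ (with a single distinguished fixed point in the intervals $[\widehat{0},M]$); building such an involution coherently across all intervals is the same obstruction encountered on the shellability side, which is why I would ultimately pursue the thin-plus-shellable approach above.
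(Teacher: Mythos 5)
This statement is a conjecture in the paper --- the authors give no proof of it --- and your proposal does not close it either: you yourself flag the crux (an EL-labeling or recursive atom ordering of $W_{n}\cup\{\widehat{0}\}$, or equivalently a coherent parity-reversing involution on all intervals) as the unresolved obstacle. What you have is a plausible program, essentially the one the authors sketch for $EP_{n}$ in Conjecture \ref{bigconj} (thin $+$ shellable $\Rightarrow$ spherical $\Rightarrow$ Eulerian via \cite[Proposition 4.7.22]{bjorner} and \cite{bjornerwachs}), transported to $W_{n}$, together with the consistency check $\sum_{M}(-1)^{\mathrm{cr}(M)}=1$ from the Touchard--Riordan polynomial at $q=-1$; but that check only verifies the M\"{o}bius relation for the single interval $[\widehat{0},\widehat{1}]$, not for all intervals, so it is evidence, not proof.

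Beyond the admitted gap, one of your foundational claims is false, and it undermines the thinness argument as written. You assert that smoothing any crossing of a matching lowers the crossing number by exactly one, so that ``each crossing of a matching $N$ yields precisely two covers below $N$.'' Take the matching $\{1,4\},\{2,5\},\{3,6\}$ on six points, which has $3$ crossings. Smoothing the crossing of $\{1,4\}$ and $\{2,5\}$ one way gives $\{1,5\},\{2,4\},\{3,6\}$ with $2$ crossings (a cover), but the other way gives $\{1,2\},\{4,5\},\{3,6\}$ with $0$ crossings --- a drop of three ranks, not one. In wiring-diagram language, that breaking creates a lens through which the third geodesic passes, and resolving the lens removes two further crossings; this is exactly the phenomenon that makes the cover relation in $EP_{n}$ (Proposition \ref{coverrelation}) and in $W_{n}$ nonuniform, and it is why the paper's Lemma \ref{diamond} requires a genuine case analysis of which breakings preserve lenslessness rather than the ``no legality caveat'' shortcut you invoke. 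Your rank-$2$ interval analysis therefore needs to be redone with the correct cover criterion (a smoothing is a cover only if it preserves lenslessness, i.e.\ drops the crossing number by exactly one); the diamond property for $W_{n}$ may well survive this, but it does not follow from the argument given, and even then the conjecture would still hinge on the shellability step that remains open.
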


\section{Enumerative Properties}\label{enumerative}

We now investigate the enumerative properties of $EP_{n}$, defined in \S\ref{poset}. In the sections that follow, all wiring diagrams are assumed to be lensless, and are considered up to motion-equivalence.

\subsection{Total size $X_{n}=|EP_{n}|$}

In this section, we adapt methods of \cite{callan} to prove the first two enumerative results concerning $|EP_{n}|$, the number of equivalence classes of critical graphs (equivalently, full wiring diagrams) of order $n$. There is a strong analogy between stabilized-interval free (SIF) permuations, as described in \cite{callan}, and our medial graphs, as follows. A permutation $\sigma$ may be represented as a 2-regular graph $\Sigma$ embedded in a disk with $n$ boundary vertices. Then, $\sigma$ is SIF if and only if there are no dividing lines, where here a dividing line is a line $\ell$ between two boundary vertices such that no edge of $\Sigma$ connects vertices on opposite sides of $\ell$.

To begin, we define two operations on wiring diagrams in order to build large wiring diagrams out of small, and vice versa. In both definitions, fix a lensless (but not necessarily full) wiring diagram $M$ of order $n$, with boundary vertices labeled $V_{1},V_{2},\ldots,V_{n}$.

\begin{defn}
Let $w=XY$ be a wire of $M$. Construct the \textbf{crossed expansion of $M$ at $w$}, denoted $M^{w}_{+,c}$, as follows: add a boundary vertex $V_{n+1}$ to $M$, with associated medial boundary vertices $A,B$, so that the medial boundary vertices $A,B,X,Y$ appear in order around the circle. Then, delete $w$ from $M$ and replace it with the crossing wires $AX,BY$ to form $M^{w}_{+,c}$. Similarly, define the \textbf{uncrossed expansion of $M$ at $w$}, denoted $M^{w}_{+,u}$, to be the lensless wiring digram obtained by deleting $w$ and replacing it with the non-crossing wires $AY,BX$.
\end{defn}

\begin{defn}\label{refine}
Let $V_{i}$ be a boundary vertex with associated medial boundary vertices $A,B$, such that we have the wires $AX,BY\in M$, and $X\neq B,Y\neq A$. Define the \textbf{refinement of $M$ at $V_{i}$}, denoted $M^{i}_{-}$, to be the lensless wiring diagram of order $n-1$ obtained by deleting the wires $AX,BY$ as well as the vertices $A,B,V_{i}$, and adding the wire $XY$.
\end{defn}

Each construction is well-defined up to equivalence under motions by Theorem \ref{motionsenough}. It is clear that expanding $M$, then refining the result at the appropriate vertex, recovers $M$. Similarly, refining $M$, then expanding the result after appropriately relabeling the vertices, recovers $M$ if the correct choice of crossed or uncrossed is made.

\begin{lemma}\label{explemma}
Let $M$ be a full wiring diagram, with boundary vertices $V_{1},V_{2},\ldots,V_{n}$. Then:
\begin{enumerate}
\item[(a)] $M_{+,c}^{w}$ is full for all wires $w\in M$. 
\item[(b)] Either $M_{+,u}^{w}$ is full, or otherwise $M_{+,u}^{w}$ has exactly one dividing line, which must have $V_{n+1}$ as one of its endpoints.
\end{enumerate}
\end{lemma}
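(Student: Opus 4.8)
My plan is to work entirely with the matching description of lensless wiring diagrams: by Proposition~\ref{motionsenough} a motion-equivalence class of lensless diagrams is determined by the pairing it induces on the $2n$ medial boundary vertices, and whether a wire crosses a chord $V_iV_j$ depends only on whether its two endpoints lie on opposite arcs determined by that chord. So I will treat $M$ as a perfect matching of the medial boundary vertices and test dividing lines by checking endpoint positions. Fix the wire $w=XY$ and recall that $V_{n+1}$ is inserted with its two medial boundary vertices $A,B$ straddling it, in cyclic order $A,V_{n+1},B,X,\dots,Y$, with $A,V_{n+1},B,X$ consecutive (no old black vertex lies among them). The crossed expansion replaces $w$ by $AX,BY$; the uncrossed by $AY,BX$. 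I will prove both parts at once by splitting on whether a purported dividing line $\ell$ uses $V_{n+1}$.

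\textbf{Dividing lines avoiding $V_{n+1}$.} Suppose $\ell=V_iV_j$ with $i,j\le n$. The key observation is that $A,B,X$ all lie on the \emph{same} side of $\ell$: since $V_{n+1}$ is interior to one of the two arcs cut by $\ell$ and no endpoint of $\ell$ lies among the consecutive vertices $A,V_{n+1},B,X$, these cannot be separated. Now in the crossed case the wire $BY$, and in the uncrossed case the wire $AY$, would (if $\ell$ were dividing) have to avoid crossing $\ell$, forcing $Y$ onto that same side; then $w=XY$ lies on one side of $\ell$ as well, and since all wires common to $M$ and the expansion already avoid $\ell$, the chord $\ell$ would be a dividing line of $M$ itself, contradicting fullness. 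This simultaneously rules out such $\ell$ for the crossed expansion and shows that for the uncrossed expansion every dividing line must have $V_{n+1}$ as an endpoint.

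\textbf{Dividing lines through $V_{n+1}$.} Here $A$ and $B$ lie on opposite sides of $\ell=V_{n+1}V_k$, while $X$ shares $B$'s side (again by adjacency, as $V_k\neq V_{n+1}$). In the crossed case the wire $AX$ then joins the two sides and crosses $\ell$, so no such dividing line exists; combined with the previous paragraph this proves part~(a). In the uncrossed case $BX$ stays on one side and is harmless, so such an $\ell$ may genuinely be dividing. To control these, I translate the condition into $M$: deleting $A,B,V_{n+1}$ and restoring $w$, the partition cut by $\ell$ restricts to the partition of $M$'s medial vertices determined by the cut at $V_k$ and the cut $C_1$ in the gap immediately before $X$; one checks that $\ell$ is dividing exactly when $C_1$ together with $V_k$ is crossed by $w$ alone (all other wires of $M$ staying within a side). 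For uniqueness, suppose two distinct $V_k,V_{k'}$ gave dividing lines. Then $X$ and $Y$ lie on prescribed sides for both, so the arc between $V_k$ and $V_{k'}$ contains neither $X$ nor $Y$ and is crossed by no wire of $M$; hence it is a union of complete wires, making $V_kV_{k'}$ a dividing line of $M$ --- contradiction. So at most one dividing line survives, and it passes through $V_{n+1}$, giving part~(b).

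\textbf{Anticipated obstacle.} The routine cases (those avoiding $V_{n+1}$, and the crossed case through $V_{n+1}$) reduce cleanly to the ``same side'' adjacency observation. The genuine difficulty is the uncrossed/through-$V_{n+1}$ case: the cut $C_1$ induced on $M$ falls in a \emph{gap} between two black vertices rather than at a black vertex, so a dividing line of $M^{w}_{+,u}$ does not directly restrict to one of $M$. The crux is therefore the uniqueness argument, where I must extract a genuine dividing chord $V_kV_{k'}$ of $M$ from the existence of two such lines; getting the bookkeeping of which medial vertices fall on which side of each cut exactly right (and confirming $X,Y$ are excluded from the intervening arc) is the step requiring the most care.
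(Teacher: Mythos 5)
Your overall strategy is the same as the paper's: split on whether a putative dividing line $\ell$ has $V_{n+1}$ as an endpoint, reduce the $V_{n+1}$-free case to a dividing line of $M$ itself, rule out the through-$V_{n+1}$ case for the crossed expansion using the two new wires, and obtain uniqueness in the uncrossed case by showing that two dividing lines $V_{k}V_{n+1}$, $V_{k'}V_{n+1}$ would force $V_{k}V_{k'}$ to be a dividing line, contradicting fullness of $M$. That skeleton, including the uniqueness step, is exactly the published argument.

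There is, however, a genuine flaw in your setup which propagates into part (a). You assume that $A,V_{n+1},B,X$ are consecutive on the circle, ``no old black vertex among them.'' The definition only requires that $A,B,X,Y$ appear in cyclic order: $V_{n+1}$ sits at a fixed position on the boundary circle (it must, for expansion to invert refinement at $V_{n+1}$ and for the count of $2(n-1)$ expansions in the proof of Theorem \ref{recurrence} to be correct), while $w=XY$ is an arbitrary wire, so in general many boundary vertices lie on the arc between $B$ and $X$. Consequently your claim that ``$X$ shares $B$'s side by adjacency'' is false precisely when the other endpoint $V_{k}$ of $\ell=V_{n+1}V_{k}$ lies on the arc between $B$ and $X$: in that configuration $X$ lies on $A$'s side, the wire $AX$ does \emph{not} cross $\ell$, and your stated contradiction evaporates. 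The lemma still holds in that case --- then both $X$ and $Y$ lie on $A$'s side, so the \emph{other} new wire $BY$ crosses $\ell$; equivalently, in the paper's phrasing, any chord out of $V_{n+1}$ must exit the sector at $V_{n+1}$ bounded by the two crossing wires $AX,BY$ --- but as written your case analysis simply misses it. The same misreading infects your first case (``$A,B,X$ all on the same side'' can fail), though there the repair is immediate, since only ``$A,B$ on the same side'' is needed and the sides of $X$ and $Y$ are then forced by the two new wires; and in the uncrossed/uniqueness step your ``prescribed sides'' must be rederived from the requirement that $AY$ and $BX$ avoid $\ell$ (which forces $V_{k}$, and likewise $V_{k'}$, to lie strictly between $X$ and $Y$ on the arc avoiding $A,B$), after which your uniqueness argument goes through as the paper's does.
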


\begin{proof}
First, suppose for sake of contradiction that $M_{+,c}^{w}$ has a dividing line $\ell$. If $\ell$ is of the form $V_{i}V_{n+1}$, then $\ell$ must exit the sector formed by the two crossed wires coming out of the medial boundary vertices associated to $V_{n+1}$. If this is the case, we get an intersection between $M_{+,c}$ and a wire, a contradiction. If instead, $\ell=V_{i}V_{j}$ with $i,j\neq n+1$, then $\ell$ is a dividing line in $M$, also a contradiction. We thus have (a). Similarly, we find that any dividing line of $M_{+,u}^{w}$ must have $V_{n+1}$ as an endpoint. However, if $V_{i}V_{n+1},V_{i'}V_{n+1}$ are dividing lines, then $V_{i}V_{i'}$ is as well, a contradiction, so we have (b).
\end{proof}

\begin{lemma}\label{maxdivline}
Let $M$ be a full wiring diagram, with boundary vertices $V_{1},V_{2},\ldots,V_{n}$. Furthermore, suppose $M^{n}_{-}$ exists and is not full. Then, $M^{n}_{-}$ has a unique dividing line $V_{i}V_{j}$ with $1\le i<j\le n-1$ and $j-i$ maximal.
\end{lemma}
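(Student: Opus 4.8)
The plan is to show that the set of dividing lines of $M^n_-$ is closed under a natural ``join'' operation, from which uniqueness of the maximizer of $j-i$ follows formally. Throughout I regard a wiring diagram as a perfect matching on its $2(n-1)$ medial boundary vertices, and I use that $V_aV_b$ is a dividing line precisely when no wire joins a medial vertex strictly on one side of the chord $V_aV_b$ to one strictly on the other side.

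First, since $M^n_-$ is not full it has at least one dividing line, and as there are only finitely many, the quantity $j-i$ attains a maximum; it therefore suffices to rule out two distinct maximizers. So I suppose $V_{i_1}V_{j_1}$ and $V_{i_2}V_{j_2}$ are distinct dividing lines, both with $j-i$ maximal, and I split into cases according to how the two chords sit relative to one another.

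The key geometric step handles the case where the two chords cross, say $i_1 < i_2 < j_1 < j_2$. Here the chords cut the disk into four arcs $\alpha = (V_{i_1},V_{i_2})$, $\beta = (V_{i_2},V_{j_1})$, $\gamma = (V_{j_1},V_{j_2})$, $\delta = (V_{j_2},V_{i_1})$. Because $V_{i_1}V_{j_1}$ is a dividing line, no wire joins $\alpha\cup\beta$ to $\gamma\cup\delta$; because $V_{i_2}V_{j_2}$ is a dividing line, no wire joins $\beta\cup\gamma$ to $\delta\cup\alpha$. Intersecting these two conditions forces every wire to have both endpoints in a single one of the four arcs $\alpha,\beta,\gamma,\delta$. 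But then no wire can cross the chord $V_{i_1}V_{j_2}$, whose two sides are $\alpha\cup\beta\cup\gamma$ and $\delta$, so $V_{i_1}V_{j_2}$ is itself a dividing line. Since $j_2 - i_1 > j_1 - i_1$, this contradicts the maximality of $j-i$ for $V_{i_1}V_{j_1}$. (The symmetric crossing configuration $i_2 < i_1 < j_2 < j_1$ is identical after swapping the roles of the two chords.) In the remaining, nested case, say $i_1 \le i_2 < j_2 \le j_1$, one has $j_1 - i_1 \ge j_2 - i_2$ with equality only if $i_1=i_2$ and $j_1=j_2$; since both differences equal the common maximum, the two dividing lines coincide, contradicting distinctness. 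This exhausts all relative positions of two chords, giving uniqueness, which is the assertion of the lemma.

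I expect the only real content to be the four-arc confinement argument in the crossing case; the rest is bookkeeping about relative positions of chords. The one point to treat carefully is the assignment of the medial boundary vertices lying at the endpoints $V_{i_1},V_{i_2},V_{j_1},V_{j_2}$, since each such boundary vertex contributes two medial vertices that must be placed consistently into $\alpha,\beta,\gamma,\delta$; once this is set up, the Boolean intersection of the two ``no crossing'' conditions goes through verbatim. It is worth noting that the argument never uses the specific refinement structure of $M^n_-$, only that it is a wiring diagram: the hypotheses that $M$ is full and $M^n_-$ is not full enter solely to guarantee that at least one dividing line exists.
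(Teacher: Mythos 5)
Your case analysis is incomplete, and the missing case is exactly where the lemma's hypotheses do real work. Two distinct chords $\ell_1=V_{i_1}V_{j_1}$ and $\ell_2=V_{i_2}V_{j_2}$ (say $i_1\le i_2$) need not cross or be nested: they can be \emph{disjoint}, with $j_1<i_2$. This configuration is entirely compatible with both differences being maximal (e.g.\ $V_1V_2$ and $V_3V_4$ with common difference $1$); your crossing argument needs $i_2<j_1$ and your nested argument needs $j_2\le j_1$, so neither applies. (In fact, once the two differences are equal and maximal, the nested case you do treat is vacuous: $i_1\le i_2$ and $j_2\le j_1$ force $j_2-i_2\le j_1-i_1$ with equality only for identical chords.) Worse, no argument that uses only ``$M^n_-$ is a wiring diagram with a dividing line'' can close the disjoint case, so your closing remark --- that the refinement structure is never used and fullness of $M$ enters only to guarantee existence --- is precisely where the proof fails. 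Concretely: with $A_i,B_i$ the medial vertices flanking $V_i$ (clockwise order $A_i,V_i,B_i$), the order-$4$ lensless wiring diagram with wires $B_1A_2$, $B_3A_4$, $B_2B_4$, $A_3A_1$ has exactly two dividing lines, $V_1V_2$ and $V_3V_4$, both maximizing $j-i$. So uniqueness is simply false for general non-full diagrams.

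The paper's proof treats the overlapping case $j_1\ge i_2$ essentially as you do (concluding $V_{i_1}V_{j_2}$ is a dividing line with $j_2-i_1>d$), but in the disjoint case $j_1<i_2$ it argues that at least one of $\ell_1,\ell_2$ is already a dividing line of $M$ itself, contradicting fullness of $M$. You can repair your write-up the same way: $M$ differs from $M^n_-$ only in that the wire $XY$ of $M^n_-$ is replaced by the two wires $AX$, $BY$, where $A,B$ are the medial vertices at $V_n$. If $\ell_1$ fails to divide $M$, the offending wire must be $AX$ or $BY$, since all other wires are common to both diagrams; as $A,B$ sit at $V_n$, which lies outside both chords' cut-off regions, this forces $X$ or $Y$ strictly inside the region cut off by $\ell_1$ away from $V_n$, and similarly for $\ell_2$. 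A single wire cannot cross both (the two cut-off regions are disjoint since $j_1<i_2$), and if $AX$ crosses $\ell_1$ while $BY$ crosses $\ell_2$, then $X$ and $Y$ lie in the two disjoint regions, so the wire $XY$ of $M^n_-$ has endpoints on opposite sides of $\ell_1$ --- contradicting that $\ell_1$ is a dividing line of $M^n_-$. Hence one of $\ell_1,\ell_2$ divides $M$, contradiction. One last small point: in your crossing case you assume strict inequalities, but $j_1=i_2$ (shared endpoint, degenerate arc $\beta$) can occur and should be folded in; the paper's hypothesis $j_1\ge i_2$ handles it uniformly.
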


\begin{proof}
By assumption, $M^{n}_{-}$ has a dividing line, so suppose for sake of contradiction that $\ell_{1}=V_{i_{1}}V_{j_{1}},\ell_{2}=V_{i_{2}}V_{j_{2}}$ are both dividing lines of $M'$ with $d=j_{1}-i_{1}=j_{2}-i_{2}$ maximal. Without loss of generality, assume $i_{1}<i_{2}$ (and $i_{1}<j_{1},i_{2}<j_{2}$). If $j_{1}\ge i_{2}$, then $V_{i_{1}}V_{j_{2}}$ is also a dividing line with $j_{2}-i_{1}>d$, a contradiction. On the other hand, if $j_{1}<i_{2}$, at least one of $\ell_{1},\ell_{2}$ is a dividing line for $M$, again a contradiction.
\end{proof}

If $M,i,j$ are as above, we now define two wiring diagrams $M_{1}$ and $M_{2}$; see Figure \ref{M1M2} for an example. First, let $M_{1}$ be the result of restricting $M$ to the wires associated to the vertices $V_{k}$, for $k\in[i,j]\cup\{n\}$. Note that $M_{1}$ is a wiring diagram of order $j-i+1$ with boundary vertices $V_{i},V_{i+1},\ldots,V_{j}$ (and not $V_{n}$). Then, let $M_{2}$ be the wiring diagram of order $n-(j-i+1)$ obtained by restricting $M$ to the wires associated to the vertices $V_{k}$, for $k\notin[i,j]\cup\{n\}$.

\begin{figure}
\begin{center}
\includegraphics[scale=0.4]{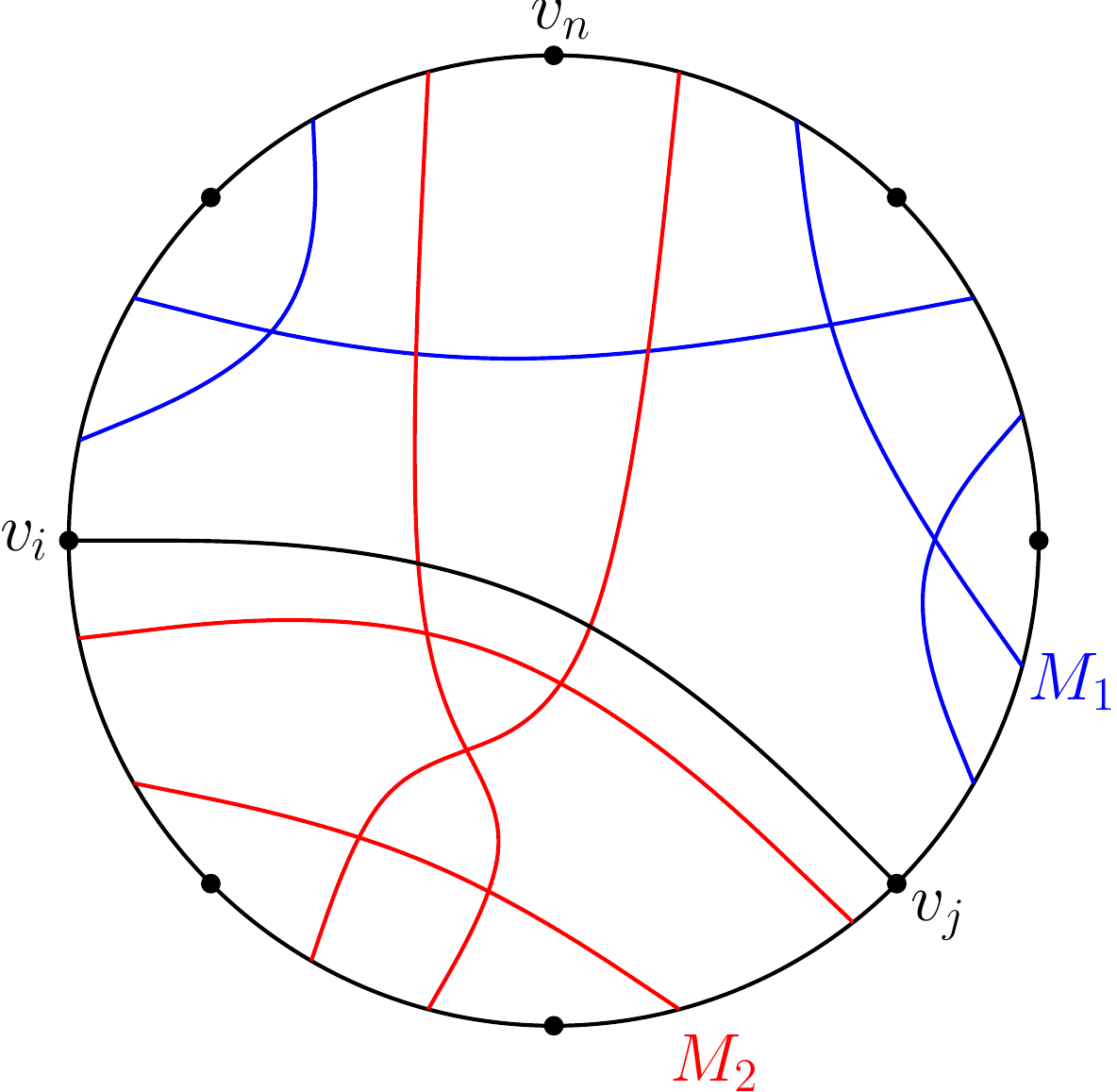}
\caption{$M_{1}$ and $M_{2}$, from $M$.}\label{M1M2}
\end{center}
\end{figure}

\begin{lemma}\label{reflemma}
$M_{1}$ and $M_{2}$, as above, are full.
\end{lemma}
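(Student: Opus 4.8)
The plan is to show that neither $M_1$ nor $M_2$ admits a proper dividing line, arguing by contradiction and transferring any such line back to $M$, whose fullness is the hypothesis to exploit. Throughout, keep the notation of Definition \ref{refine}: $X,Y$ are the medial boundary vertices joined by the wire created under refinement, so that in $M$ they instead appear as the two wires at $V_n$. Recall from the proof of Lemma \ref{maxdivline} that both $X$ and $Y$ lie strictly inside the arc cut off by $V_iV_j$ (otherwise $V_iV_j$ would already divide the full $M$). The key tool I would isolate first is the following \emph{transfer principle}: since $V_n$ lies outside every arc $(p,q)$ with $p,q\le n-1$, a line $V_pV_q$ that divides $M^{n}_{-}$ automatically divides $M$ \emph{unless} both $X$ and $Y$ lie in $(p,q)$ — only then can the two wires at $V_n$ cross it while the single wire $XY$ does not. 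As $M$ is full, every dividing line of $M^{n}_{-}$ must trap both $X$ and $Y$.

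For $M_1$, recall that it lives on $V_i,\dots,V_j$, built from the wires of $M$ internal to $(i,j)$ together with the two wires obtained by rerouting the wires $V_nX,V_nY$ to the medial boundary vertices of $V_i$ and $V_j$ \emph{not} facing the arc $(i,j)$. Suppose $M_1$ had a proper dividing line $\ell=V_aV_b$ with $i\le a<b\le j$. When $a>i$ and $b<j$, the outer medial endpoint of $V_i$ lies on the far side of $\ell$, so the rerouted wire reaching $X$ cannot cross $\ell$ unless $X$ is also on the far side; this forces $X\notin(a,b)$, and symmetrically $Y\notin(a,b)$. Since the remaining internal, complementary, and outward wires of $M$ visibly fail to cross $V_aV_b$, the transfer principle then makes $V_aV_b$ a dividing line of $M$, contradicting fullness.

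The crux is the boundary case, say $a=i$ (the case $b=j$ being symmetric), which I would handle by a direct crossing observation rather than the transfer principle. Here the rerouted wire joins the medial endpoint of $V_i$ lying on the side of $\ell=V_iV_b$ away from the arc $(i,b)$ to $X$; if $X\in(i,b)$ this wire is forced to cross $\ell$, contradicting that $\ell$ divides $M_1$, whereas if $X\notin(i,b)$ we are back in the previous situation (both $X,Y$ outside $(a,b)$) and again produce a dividing line of $M$. Hence $M_1$ has no proper dividing line and is full.

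For $M_2$, built on $V_1,\dots,V_{i-1},V_{j+1},\dots,V_n$ by collapsing the block and rerouting the outward wires of $V_i,V_j$ to the two medial endpoints of $V_n$, the argument runs in parallel but now invokes maximality. A proper dividing line of $M_2$ that cleanly separates the collapsed block lifts to a dividing line of $M^{n}_{-}$; by the transfer principle its arc must contain $X,Y$, hence the entire block $[i,j]$, so its gap strictly exceeds $j-i$, contradicting the maximality in Lemma \ref{maxdivline}. The main obstacle I anticipate in both halves is the bookkeeping of exactly which medial endpoints fall on which side of the candidate line once the rerouting is performed — in particular the boundary subcases (for $M_2$, lines having $V_n$ itself as an endpoint), which must be cleared by the same single crossing observation used above. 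Once the transfer principle and that crossing observation are in place, every case collapses to either the fullness of $M$ or the maximality of $V_iV_j$.
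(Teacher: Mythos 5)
Your proposal is correct and takes essentially the same route as the paper, which compresses the entire argument into two sentences --- any dividing line of $M_1$ is checked to also be a dividing line of $M$ (contradicting fullness), and any dividing line $V_{i'}V_{j'}$ of $M_2$ is also a dividing line of $M^{n}_{-}$ with $j'-i'>j-i$ (contradicting the maximality in Lemma \ref{maxdivline}) --- so your transfer principle, the trapping of $X,Y$, and the boundary cases are precisely the details the paper leaves implicit. One caution: your passing description of $M_2$ as rerouting the outward wires of $V_i,V_j$ to the medial boundary vertices of $V_n$ sits uneasily with the collapsed-block picture your lifting step actually requires (an endpoint transported to $V_n$'s flank can change sides relative to a chord separating $V_n$ from the block's position, which would break the lift to $M^{n}_{-}$), but since your $M_2$ argument is run in the collapsed-block picture, the proof itself goes through.
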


\begin{proof}
It is not difficult to check that any dividing line of $M_{1}$ must also be a dividing line of $M$, a contradiction. A dividing line $V_{i'}V_{j'}$ of $M_{2}$ must also be a dividing line of $M^{n}_{-}$, but then $j'-i'>j-i$, contradicting the maximality from Lemma \ref{maxdivline}.
\end{proof}

We are now ready to prove the main theorem of this section.

\begin{theorem}\label{recurrence}
Put $X_{n}=|EP_{n}|$, which here we take to be the number of full wiring diagrams of order $n$. Then, $X_{1}=1$, and for $n\ge2$,
\begin{equation*}
X_{n}=2(n-1)X_{n-1}+\sum_{k=2}^{n-2}(k-1)X_{k}X_{n-k}.
\end{equation*}
\end{theorem}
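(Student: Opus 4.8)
The plan is to prove the recurrence by a bijective decomposition of full wiring diagrams of order $n$, splitting into two cases according to whether a certain refinement produces a full diagram. Since $X_n$ counts full wiring diagrams up to motion-equivalence (by Theorem \ref{critmedbij}), I would set up the counting argument using the expansion and refinement operations, exploiting the fact that they are inverse to each other. The key structural inputs are Lemmas \ref{explemma}, \ref{maxdivline}, and \ref{reflemma}.

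First I would count in the ``forward'' direction using expansions. Given a full wiring diagram $M'$ of order $n-1$ with $X_{n-1}$ choices, I want to build full diagrams of order $n$ by adding the vertex $V_n$. By Lemma \ref{explemma}(a), the crossed expansion $M'^{w}_{+,c}$ at any of the $n-1$ wires $w$ is always full; this yields a family of full order-$n$ diagrams. For uncrossed expansions, Lemma \ref{explemma}(b) says $M'^{w}_{+,u}$ is either full or has a single dividing line through $V_n$. I would argue that exactly the full outcomes here, together with the crossed expansions, account for the $2(n-1)X_{n-1}$ term: there are $n-1$ wires and two ways (crossed/uncrossed) to expand at each, and I must check that the non-full uncrossed cases are precisely compensated or excluded. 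The cleaner route is to classify full order-$n$ diagrams $M$ by whether $M^{n}_{-}$ is full: if $M^{n}_{-}$ is full, then $M$ is recovered as an expansion of the full diagram $M^{n}_{-}$ of order $n-1$, and since expansion (crossed or uncrossed) at each of its $n-1$ wires recovers $M$ uniquely, these contribute exactly $2(n-1)X_{n-1}$ diagrams.

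For the remaining diagrams—those $M$ for which $M^{n}_{-}$ exists but is \emph{not} full—I would use Lemmas \ref{maxdivline} and \ref{reflemma}. By Lemma \ref{maxdivline}, $M^{n}_{-}$ has a unique dividing line $V_iV_j$ with $j-i$ maximal, and by Lemma \ref{reflemma} the two restricted diagrams $M_1$ (order $k := j-i+1$) and $M_2$ (order $n-k$) are both full. I would verify that $M \mapsto (M_1, M_2)$ together with the data of which wire of $M_2$ the block $M_1$ attaches to is a bijection onto pairs of full diagrams with a marked attachment position. The diagram $M_1$ has order $k$ ranging over $2 \le k \le n-2$ (the extremes being excluded because a dividing line requires at least two vertices on each side, and the maximality forces $k \ge 2$), contributing $X_k$; the complementary piece $M_2$ has order $n-k$ contributing $X_{n-k}$; and the factor $(k-1)$ records the number of positions at which $V_n$'s wire can reattach $M_1$ into $M_2$. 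Summing over $k$ gives $\sum_{k=2}^{n-2}(k-1)X_kX_{n-k}$, and the base case $X_1 = 1$ is immediate.

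The main obstacle I anticipate is establishing that the second-case correspondence is a genuine bijection, specifically pinning down the exact multiplicity $(k-1)$. I would need to show carefully that reconstructing $M$ from $(M_1, M_2)$ requires choosing how the wire newly attached to $V_n$ threads through $M_1$—equivalently, where in the cyclic structure of $M_1$'s boundary the connection to $M_2$ is made—and that there are precisely $k-1$ inequivalent (under motions) such choices yielding a full diagram with the prescribed maximal dividing line. This demands checking that distinct choices give motion-inequivalent diagrams and that every full $M$ in this case arises exactly once, using the maximality of $j-i$ from Lemma \ref{maxdivline} to guarantee that the decomposition point is canonical. The interaction between the refinement $M^{n}_{-}$ and the expansion operations, and verifying no full diagram is double-counted across the two cases, is where I would spend the most care.
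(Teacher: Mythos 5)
Your overall strategy---classifying full diagrams $M$ of order $n$ by the behavior of the refinement $M^{n}_{-}$ and invoking Lemmas \ref{explemma}, \ref{maxdivline}, and \ref{reflemma}---is the paper's strategy, but each of your case counts is incorrect, and your total only comes out right because the errors cancel. First, your ``cleaner route'' asserts that the full $M$ with $M^{n}_{-}$ full contribute exactly $2(n-1)X_{n-1}$; this contradicts Lemma \ref{explemma}(b), which you yourself cite: an uncrossed expansion of a full diagram of order $n-1$ may fail to be full, acquiring one dividing line $V_{k}V_{n}$. The correct count for this case is $2(n-1)X_{n-1}-\sum_{k=1}^{n-1}X_{k}X_{n-k}$, the subtracted term counting the non-full expansions (a unique dividing line $V_{k}V_{n}$ forces a pair of full diagrams of orders $k$ and $n-k$). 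Second, you omit entirely the case in which $M^{n}_{-}$ does not exist, namely when the two medial boundary vertices $A_{n},B_{n}$ attached to $V_{n}$ are joined by a single wire (Definition \ref{refine} requires $X\neq B$, $Y\neq A$); these $M$ are in bijection with full diagrams of order $n-1$ and contribute $X_{n-1}$. Third, in the case where $M^{n}_{-}$ is not full, your multiplicity is wrong: the decomposition datum is the position of the maximal dividing line $V_{i}V_{j}$, and with $k=j-i+1$ the order of $M_{1}$, the number of admissible pairs $(i,j)$ with $1\le i<j\le n-1$ and $j-i=k-1$ is $n-k$, not $k-1$; moreover $M_{2}$ may have order $1$, which your range $2\le k\le n-2$ excludes. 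The paper's count here is $\sum_{1\le i<j\le n-1}X_{j-i+1}X_{n-(j-i+1)}=\sum_{k=1}^{n-2}kX_{k}X_{n-k}$, where in the last expression $k$ is the order of $M_{2}$, and nothing resembling a ``$k-1$ choices of reattachment'' argument appears or works.

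The three discrepancies cancel in aggregate: your overcount $\sum_{k=1}^{n-1}X_{k}X_{n-k}$ in the first case, minus your undercount $\sum_{k=1}^{n-2}X_{k}X_{n-k}$ in the third, equals $X_{n-1}X_{1}=X_{n-1}$, which is exactly the contribution of the case you dropped. That cancellation is precisely the content of the final algebraic simplification in the paper,
\begin{equation*}
X_{n}=X_{n-1}+2(n-1)X_{n-1}-\sum_{k=1}^{n-1}X_{k}X_{n-k}+\sum_{k=1}^{n-2}kX_{k}X_{n-k}=2(n-1)X_{n-1}+\sum_{k=2}^{n-2}(k-1)X_{k}X_{n-k},
\end{equation*}
and it cannot be obtained by declaring that each case separately produces a term of the final formula---no single case does. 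To repair your writeup, keep the three-way split ($A_{n}B_{n}$ a wire; $M^{n}_{-}$ full; $M^{n}_{-}$ not full), count each case as above, and carry out the simplification using $X_{1}=1$.
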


\begin{proof}
$X_{1}=1$ is obvious. For $n>1$, we would like to count the number of full wiring diagrams $M$ of order $n$, whose boundary vertices are labeled $V_{1},V_{2},\ldots,V_{n}$, in clockwise order, with medial boundary vertices $A_{i}$ and $B_{i}$ at each vertex, so that the order of points on the circle is $A_{i},V_{i},B_{i}$ in clockwise order. If $A_{n}B_{n}$ is a wire, constructing the rest of $M$ amounts to constructing a full wiring diagram of order $n-1$, so there are $X_{n-1}$ such full wiring diagrams in this case.

Otherwise, consider the refinement $M^{n}_{-}$. All $M$ for which $M^{n}_{-}$ is full can be obtained by expanding at one of the $n-1$ wires of a full wiring diagram $M'$ of order $n-1$. By Lemma \ref{explemma}, the expanded wiring diagram is full unless it has exactly one dividing line $V_{k}V_{n}$, and furthermore it is easy to see that any such graphs is an expansion of a full wiring diagram of order $n-1$.

There are $2(n-1)$ ways to expand $M'$, and each expansion gives a different wiring diagram of order $n$, for $2(n-1)X_{n-1}$ total expanded wiring diagrams. However, by the previous paragraph, the number of these which are not full is $\sum_{k=1}^{n-1}X_{k}X_{n-k}$, as imposing a unique dividing line $V_{k}V_{n}$ forces us to construct two full wiring diagrams on either side, of orders $k,n-k$ respectively. Thus, we have $2(n-1)X_{n-1}-\sum_{k=1}^{n-1}X_{k}X_{n-k}$ full wiring diagrams of order $n$ such that refining at $V_{n}$ gives another full wiring diagram.

It is left to count those $M$ such that contracting at $V_{n}$ leaves a non-full wiring diagram $M'$. By Lemma \ref{reflemma}, such an $M$ gies us a pair of full wiring diagrams of orders $i+j+1,n-(i+j+1)$, where $V_{i}V_{j}$ is as in Lemma \ref{maxdivline}. Conversely, given a pair of boundary vertices $V_{i},V_{j}\neq V_{n}$ of $M$ and full wiring diagrams of orders $j-i+1,n-(j-i+1)$, we may reverse the construction $M\mapsto(M_{1},M_{2})$ to get a wiring diagram of order $n$: furthermore, it is not difficult to check that this wiring diagram is full.

It follows that the number of such $M$ is 
\begin{equation*}
\sum_{1\le i<j\le n-1}X_{j-i+1}X_{n-(j-i+1)}=\sum_{k=1}^{n-2}kX_{k}X_{n-k}.
\end{equation*}
Summing our three cases together, we find
\begin{align*}
X_{n}&=X_{n-1}+2(n-1)X_{n-1}-\sum_{k=1}^{n-1}X_{k}X_{n-k}+\sum_{k=1}^{n-2}kX_{k}X_{n-k}\\
&=2(n-1)X_{n-1}+\sum_{k=2}^{n-2}(k-1)X_{k}X_{n-k},
\end{align*}
using the fact that $X_{1}=1$. The theorem is proven. 
\end{proof}

\begin{rmk}The sequence $\{X_{n}\}$ is found in the Online Encyclopedia of Integer Sequences, see \cite{oeis}.
\end{rmk}

We also have an analogue of the other main result of \cite{callan}.

\begin{theorem}\label{gf}
Let $X(t)=\sum_{n=0}^{\infty}X_{n}t^{n}$ be the generating function for the sequence $\{X_{n}\}$, where we take $X(0)=0$. Then, we have $[t^{n-1}]X(t)^{n}=n\cdot(2n-3)!!$.
\end{theorem}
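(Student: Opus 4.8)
The plan is to reduce the identity to a single functional equation via Lagrange inversion, and then to establish that equation by a decomposition of wiring diagrams. Throughout I take the normalization $X_{0}=1$ (the empty wiring diagram is full), so that $X(t)=1+t+2t^{2}+\cdots$; this is forced, since with vanishing constant term every $[t^{n-1}]X(t)^{n}$ would be zero, so the ``$X(0)=0$'' in the statement must be read as this convention. Let $Y(t)=\sum_{n\ge 0}(2n-1)!!\,t^{n}$, with $(-1)!!=1$. Since a lensless wiring diagram of order $n$ is the same datum as a perfect matching of the $2n$ medial boundary vertices (Proposition \ref{motionsenough} and the surrounding discussion), and there are $(2n-1)!!$ such matchings, $Y(t)$ is exactly the generating function counting \emph{all} lensless wiring diagrams by order, while $X(t)$ counts the \emph{full} ones. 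Note also that $\sum_{n\ge 1}(2n-3)!!\,t^{n}=tY(t)$.

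The key claim is the functional equation
\[ Y(t)=X\bigl(tY(t)\bigr). \]
Granting this, the theorem is immediate from Lagrange inversion. Since $X(0)=1\neq 0$, the equation $A=tX(A)$ has a unique power-series solution $A(t)$ with $A(0)=0$, and Lagrange inversion gives $[t^{n}]A(t)=\tfrac1n[t^{n-1}]X(t)^{n}$. The functional equation says precisely that $A(t)=tY(t)$ solves $A=tX(A)$, so $[t^{n}]A=[t^{n-1}]Y=(2n-3)!!$, whence $[t^{n-1}]X(t)^{n}=n\,[t^{n}]A=n\,(2n-3)!!$, as desired.

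It remains to prove the functional equation, where the real content lies, and I would attack it in two ways. Combinatorially, $Y=X(tY)$ expresses a unique decomposition: reading $X(tY)=\sum_{k}X_{k}t^{k}Y(t)^{k}$, the coefficient identity is $(2n-1)!!=\sum_{k}X_{k}\sum_{m_{1}+\cdots+m_{k}=n-k}\prod_{i}(2m_{i}-1)!!$, which should be realized by contracting each maximal sub-diagram cut off by dividing lines down to a single boundary vertex, leaving a full ``core'' of some order $k$ together with the $k$ recorded pieces. This is the exact analogue of Callan's decomposition of a permutation along its stabilized intervals into a SIF skeleton, and Lemmas \ref{explemma}, \ref{maxdivline}, and \ref{reflemma} are the tools describing how fullness interacts with the refinement/expansion operations that perform and undo these contractions. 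Alternatively—and this is the verification I would actually write out, as the bijection is delicate—one proves the equation algebraically: translating the recurrence of Theorem \ref{recurrence} into generating-function form and collecting terms yields the first-order equation $X(X-1-t)=t(t+X-1)X'$, which has a unique power-series solution with $X(0)=1$. The double factorials satisfy the ODE $2t^{2}Y'=(1-t)Y-1$. Setting $A=tY$ (an invertible substitution, as $A$ is a delta series with $A_{1}=1$) and letting $G$ be defined by $G(A(t))=Y(t)$, one checks that $G$ satisfies the same equation $G(G-1-s)=s(s+G-1)G'$ with $G(0)=1$: under $s=A(t)$ the chain rule turns it, after using $2t^{2}Y'=(1-t)Y-1$, into the single identity $2t(Y+tY')=(t+1)Y-1$, which is that ODE rearranged. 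Uniqueness then forces $G=X$, i.e. $X(tY)=Y$.

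The main obstacle is exactly this functional equation. In the combinatorial route the work is showing the ``core'' is well defined and full and that the contraction/expansion is a genuine bijection with the correct order bookkeeping (the extra factor $t^{k}$); in the algebraic route it is the honest derivation of $X(X-1-t)=t(t+X-1)X'$ from the recurrence, together with the uniqueness/substitution bookkeeping. Everything past the functional equation is a mechanical application of Lagrange inversion, and I would flag the normalization $X_{0}=1$ explicitly, since the identity fails under the constant-term-zero reading.
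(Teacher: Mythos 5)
Your proof is correct, but it takes a genuinely different route from the paper's. The paper follows Callan's \emph{bijective} method: it interprets $n\cdot(2n-3)!!$ as the number of ``$2n$-odd'' wiring diagrams (those whose wire at $W_{2n}$ ends at an odd-indexed medial boundary vertex) and constructs an explicit bijection between these and lists of $n$ full wiring diagrams of total order $n-1$, using the non-crossing partition induced by dividing lines, the correspondence with Dyck paths, and a cut-and-concatenate step; no generating-function manipulation or Lagrange inversion appears there. You instead establish the functional equation $Y=X(tY)$ for $Y(t)=\sum_{n\ge0}(2n-1)!!\,t^{n}$ and finish by Lagrange inversion, and the algebraic verification you designate as primary checks out completely: Theorem~\ref{recurrence} rewrites as $X_{n}=nX_{n-1}+\sum_{j=1}^{n-1}(j-1)X_{j}X_{n-j}$, which in generating-function form is exactly $X(X-1-t)=t(t+X-1)X'$ with $X(0)=1$ (coefficient extraction determines each $X_{n}$ recursively, so the solution is unique), and your substitution $s=tY$ does reduce the equation for $G$ to $2t^{2}Y'=(1-t)Y-1$, the double-factorial ODE. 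The trade-offs: your route is mechanical and fully checkable but makes part (b) of Theorem~\ref{enumsummary} a corollary of part (a), whereas the paper's bijection is logically independent of the recurrence and explains the identity structurally---at the cost of details the paper itself leaves to the reader. Your sketched combinatorial proof of $Y=X(tY)$ (collapsing the pieces cut off by dividing lines onto a full core) is the same decomposition that underlies the paper's bijection, and the functional equation is equivalent to the formula $\sum_{n}X_{n}t^{n}=t/F^{\langle-1\rangle}(t)$ with $F(t)=tY(t)$ that the paper later quotes from the OEIS, so the two arguments are close cousins in content if not in form. Finally, your normalization flag is justified and worth keeping: as literally stated with $X(0)=0$ the identity fails (every coefficient $[t^{n-1}]X(t)^{n}$ would vanish), and the paper's own proof counts lists of $n$ full diagrams of total order $n-1$, which is nonzero only if empty diagrams are permitted, i.e.\ $X_{0}=1$; so your reading is the one the paper's proof actually uses.
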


\begin{proof}

Consider $n$ boundary vertices on a circle, labeled $V_{1},V_{2},\ldots,V_{n}$ in clockwise order. Then, label $2n$ medial boundary vertices $W_{1},W_{2},\ldots,W_{2n}$ in clockwise order so that $W_{2n-1}$ and $W_{2n}$ lie between $V_{n}$ and $V_{1}$ on the circle. Note that $n\cdot(2n-3)!!$ counts the number of wiring diagrams so that the wire with endpoint $W_{2n}$ has second endpoint $W_{z}$, for some $z$ odd. Call such wiring diagrams \textbf{$2n$-odd}. We need a bijection between $2n$-odd wiring diagrams and lists of $n$ full wiring diagrams with sum of orders equal to $n-1$.

From here, the rest of the proof is nearly identical to the analogous result given on \cite[p. 3]{callan}, so we give only a sketch. We will refer the reader often to \cite{callan} for more details.

Let $W$ be a $2n$-odd wiring diagram, with boundary vertices and medial boundary vertices labeled as above. For $i=1,2,\ldots,n$, let $p_{i}$ denote the pair of medial boundary vertices $\{W_{2i-1},W_{2i}\}$. Consider the set of dividing lines of $W$. We first partition the $p_{i}$ in to minimal consecutive blocks $I=\{p_{k},p_{k+1},\ldots,p_{\ell}\}$, where indices are \emph{not} taken modulo $n$, such that no wire has one endpoint in some $p_{i}\in I$ and the other in some $p_{j}\notin I$. Let $\pi$ denote this partition, with blocks $\pi_{1},\pi_{2},\ldots,\pi_{d}$. We order the blocks in such a way that if $p_{i}\in\pi_{a}$, and $p_{j}\in\pi_{b}$, then, if $i<j$, we have $a<b$. Note that, in particular, $p_{n}\in\pi_{d}$.

Now, for each block $\pi_{a}$, write $|\pi_{a}|=x_{a}$. For $a<d$, $\pi_{a}$ may be further partitioned in to a non-crossing partition of total size $s$, according to the dividing lines in the corresponding subgraph of $W$. Each such partition corresponds to a Dyck path $\mc{P}_{a}$ of length $2x_{a}$, by a bijection described in \cite{callan}, and it is not difficult to check that, because $\pi_{a}$ was constructed to be a minimal connected component, $\mc{P}_{a}$ only touches the $x$-axis at its endpoints.

On $\pi_{d}$, we first perform the following operation similar to refinement, as in Definition \ref{refine}. Let the second endpoints of the wires $w_{\alpha},w_{\beta}$ coming from $W_{2n-1},W_{2n}$, respectively, be $W_{\alpha},W_{\beta}$, respectively. Then, delete the wires $w_{\alpha},w_{\beta}$, and replace them with a single wire between $W_{\alpha},W_{\beta}$. If, however, $W_{2n-1},W_{2n}$ are connected by a single wire, simply delete this wire. In either case, the resulting block $\pi_{d}'$ now has order $x_{d}-1$, and it, too, may be further partitioned in to a non-crossing partition, corresponding to a Dyck path $\mc{P}_{d}$ of length $2x_{d}-2$. Unlike $\mc{P}_{a}$, with $a<d$, $\mc{P}_{d}$ may touch the $x$-axis more than twice.

We now cut the Dyck paths $\mc{P}_{a}$ in a similar way to that of \cite{callan}. For $a<d$, we cut $\mc{P}_{a}$ in the following way: remove the last upstep $u$, thus breaking $\mc{P}_{a}$ in to a path $P_{a}$, followed by an upstep $u$, and then followed by a descent $D_{a}$. As for $\mc{P}_{d}$, recall that, due to the bijection between non-crossing partitions and Dyck paths, the upsteps in $\mc{P}_{d}$ correspond to to the elements $p_{i}\in\pi_{d}$. Let $u_{0}$ denote the upstep corresponding to the $p_{i}$ containing $W_{z}$, the second endpoint of the wire with endpoint $W_{2n}$. Then, break $\mc{P}_{d}$ in to the paths $R,S$, where $R$ is the part of $\mc{P}_{d}$ appearing before $u_{0}$, and $S$ consists of alls steps after those of $R$. In the case that $z=2k-1$, note that $x_{d}=1$ and thus $\mc{P}_{d}$ is empty. In this case, $R,S$ are also taken to be empty.

Finally form the concatenated path $D_{1}uD_{2}u\cdots D_{k-1}uSRP_{1}P_{2}\cdots P_{k-1}$, as in \cite{callan}. There are $n-1$ upsteps in this path, which begins and ends on the $x$-axis. In between these $n-1$ upsteps are $n$ (possibly empty) descents, which, using the bijection between non-crossing partitions and Dyck paths, correspond to full sub-wiring diagrams of the $p_{a}$. Therefore, we get the desired list of $n$ full wiring diagrams of total order $n-1$, and the process is reversible by an argument similar to that of \cite{callan}. The details are left to the reader. 

\end{proof}

\subsection{Asymptotic Behavior of $X_{n}=|EP_{n}|$}\label{asymptoticsection}

In this section, we adapt methods from \cite{stein} to prove:

\begin{theorem}\label{asymptotic}
We have
\begin{equation*}
\lim_{n\rightarrow\infty}\frac{X_{n}}{(2n-1)!!}=\frac{1}{\sqrt{e}}.
\end{equation*}
\end{theorem}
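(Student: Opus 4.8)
The plan is to interpret the ratio $X_n/(2n-1)!!$ probabilistically and recover the constant $e^{-1/2}$ through a Poisson limit, mirroring the way Stein--Everett analyze the density of irreducible linked diagrams. Recall that, up to motion-equivalence, a lensless wiring diagram of order $n$ is the same as a perfect matching of the $2n$ medial boundary vertices, so there are exactly $(2n-1)!!$ of them, and $X_n$ counts those that are \emph{full}, i.e. have no dividing line. Thus $X_n/(2n-1)!!$ is precisely the probability that a uniformly random perfect matching $M$ of the $2n$ medial boundary vertices has no dividing line. Writing $N = N(M)$ for the number of dividing lines of $M$, the goal becomes to show that $N$ converges in distribution to a Poisson random variable of mean $1/2$; then $X_n/(2n-1)!! = \Pr[N=0] \to e^{-1/2}$.

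First I would compute $E[N]$. A dividing line is a chord $V_iV_j$, and it cuts the $2n$ medial boundary vertices into two arcs of sizes $2\ell$ and $2(n-\ell)$, where $\ell = j-i$; the chord is a dividing line exactly when each arc is matched within itself, an event of probability $P_\ell = \frac{(2\ell-1)!!\,(2n-2\ell-1)!!}{(2n-1)!!}$ by a direct count. Since each chord is counted once from each endpoint, $E[N] = \tfrac{n}{2}\sum_{\ell=1}^{n-1}P_\ell$. The symmetric extreme terms $\ell=1$ and $\ell=n-1$ each give $P_1=P_{n-1}=\frac{1}{2n-1}$, contributing $\tfrac14+\tfrac14\to\tfrac12$, while for $2\le \ell\le n-2$ one checks, using $P_\ell = \frac{(2\ell-1)!!}{(2n-1)(2n-3)\cdots(2n-2\ell+1)}$ and a Stirling estimate in the central range, that $\tfrac n2\sum_{\ell=2}^{n-2}P_\ell \to 0$. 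Hence $E[N]\to \tfrac12$, and the expected number of dividing lines of size $\ell\ge 2$ is $o(1)$.

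The key structural observation driving the constant is that the $n$ size-$1$ dividing lines correspond to the $n$ adjacent medial-vertex pairs $\{B_i,A_{i+1}\}$, and these pairs \emph{partition} all $2n$ medial boundary vertices; each is a matched edge of $M$ with probability $\tfrac{1}{2n-1}\sim\tfrac1{2n}$. To upgrade the first-moment computation to a Poisson limit I would use the method of moments, computing the factorial moments $E[(N)_k]=E[N(N-1)\cdots(N-k+1)]$ as the expected number of ordered $k$-tuples of distinct dividing lines. The dominant contribution comes from $k$ pairwise disjoint size-$1$ lines: there are $(n)_k$ ordered choices of distinct adjacent pairs, each requiring the $k$ corresponding edges to be present, which has probability $\frac{(2n-2k-1)!!}{(2n-1)!!}\sim (2n)^{-k}$, for a total $(n)_k(2n)^{-k}\to (1/2)^k$. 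Tuples that include a size-$\ge2$ line, or two crossing chords forced to be simultaneously dividing, contribute $o(1)$ to each fixed factorial moment. Thus $E[(N)_k]\to (1/2)^k$ for every $k$, so $N$ converges in distribution to $\mathrm{Poisson}(1/2)$, and $\Pr[N=0]=\sum_{k\ge0}\frac{(-1)^k}{k!}E[(N)_k]\to \sum_{k\ge 0}\frac{(-1)^k}{k!}(1/2)^k=e^{-1/2}$, as desired.

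The main obstacle is the error analysis rather than the main-term heuristic. Two points need genuine care. First, bounding the contribution of the medium and large dividing lines ($\ell$ of order $n$): here the probabilities $P_\ell$ are no longer $\sim(2n)^{-\ell}$, and one must show via Stirling's formula that $\tfrac n2\sum_\ell P_\ell$ and, more importantly, the corresponding terms in every factorial moment are negligible uniformly enough to justify interchanging the limit with the alternating inclusion--exclusion sum (a Bonferroni/dominated-convergence argument requiring a uniform bound on $E[(N)_k]$). Second, one must verify that configurations of several mutually compatible but crossing or nested dividing lines are genuinely lower order, which rests on the observation that forcing two crossing chords to both be dividing pins down the matching on their overlap and so costs an extra factor of order $1/n$. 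Once these estimates are in place, the Poisson convergence and hence the limit $e^{-1/2}$ follow; this is exactly the analytic heart of the Stein--Everett-type argument, and it is cleaner here than a direct attack on the recurrence of Theorem \ref{recurrence}, whose ordinary generating function has radius of convergence zero and so resists singularity analysis.
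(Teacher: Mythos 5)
Your argument is correct in outline and takes a genuinely different route from the paper. The paper's proof (\S\ref{asymptoticsection} and Appendix \ref{bounds}) is anchored on the recurrence of Theorem \ref{recurrence}: it first proves the ratio bounds $(2n-1)X_{n-1}<X_{n}<2nX_{n-1}$ (Lemma \ref{ratio}) by induction on the recurrence, deduces that $C=\lim X_{n}/(2n-1)!!$ exists and is positive (Corollary \ref{limit}), and only then evaluates $C$ by partitioning all $(2n-1)!!$ matchings as $X_{n}+D_{n}+E_{n}$, where $D_{n}=1+\sum_{j=1}^{n-2}\binom{n}{j}X_{n-j}$ counts diagrams whose dividing lines are adjacent-type chords wrapped around a full core, and $E_{n}$ is bounded by $Y_{n}=n\sum_{j=2}^{n-2}(2j-1)!!(2n-2j-1)!!$; the analytic heart is $D_{n}/X_{n}\to\sqrt{e}-1$ (Lemma \ref{dbound}), which uses Lemma \ref{ratio} repeatedly, and $Y_{n}/X_{n}\to 0$ (Lemma \ref{ebound}), which needs the positivity of $C$. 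Your Poisson/method-of-moments argument replaces the $D_{n}$-versus-$X_{n}$ comparison by inclusion--exclusion over the $n$ disjoint adjacent medial pairs $\{B_{i},A_{i+1}\}$, and works relative to $(2n-1)!!$ throughout, so it needs neither the recurrence nor the a priori existence of the limit; your estimate $\tfrac{n}{2}\sum_{\ell=2}^{n-2}P_{\ell}\to 0$ is exactly the paper's $Y_{n}$ bound in probabilistic clothing (and holds without Stirling: the extreme terms $\ell\in\{2,n-2\}$ contribute $O(1/n)$ after the factor $\tfrac n2$, and the interior terms decay geometrically since $P_{\ell+1}/P_{\ell}=(2\ell+1)/(2n-2\ell-1)$). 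Your identification of the size-$1$ dividing lines with matched adjacent pairs is right, and, helpfully for the deferred error analysis, a size-$1$ chord cannot be crossed by any other chord (the arc it cuts off contains no boundary vertex), so all crossing complications live among size-$\ge 2$ lines, which your first-moment bound already kills; the mixed factorial-moment terms then vanish by conditioning on the adjacent edges and reusing the same bound on the reduced matching, and the Bonferroni truncation you mention justifies the interchange of limits. What each approach buys: yours is logically lighter (Theorem \ref{asymptotic} becomes independent of Theorem \ref{recurrence}) and strictly stronger in conclusion (the number of dividing lines of a uniform matching is asymptotically Poisson of mean $1/2$, not merely $\Pr[N=0]\to e^{-1/2}$); the paper's route yields the ratio bounds of Lemma \ref{ratio}, and in particular that $X_{n}/(2n-1)!!$ is eventually increasing, which has independent interest and which your argument does not recover. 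Your closing observation is also apt: since $X_{n}$ is comparable to $(2n-1)!!$, the generating function $X(t)$ has radius of convergence zero, so singularity analysis of the recurrence is indeed unavailable.
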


In other words, the density of full wiring diagrams in the set of all wiring diagrams is $e^{-1/2}$.

\begin{lemma}\label{ratio}
For $n\ge6$, $(2n-1)X_{n-1}<X_{n}<2nX_{n-1}$.
\end{lemma}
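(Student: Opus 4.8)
The plan is to work with the ratio $r_m := X_m/X_{m-1}$ and reduce the claim to a two-sided bound on the convolution sum in the recurrence of Theorem \ref{recurrence}. Writing $S_n := \sum_{k=2}^{n-2}(k-1)X_kX_{n-k}$, the recurrence reads $X_n = 2(n-1)X_{n-1} + S_n$, so $r_n = 2(n-1) + S_n/X_{n-1}$. Hence the desired inequalities $(2n-1)X_{n-1} < X_n < 2nX_{n-1}$ are exactly equivalent to $X_{n-1} < S_n < 2X_{n-1}$, and it suffices to bound $S_n$ on both sides.

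The key simplification is a symmetrization. Substituting $k \mapsto n-k$ in the sum and averaging the two expressions gives the identity $S_n = \tfrac{n-2}{2}U_n$, where $U_n := \sum_{k=2}^{n-2}X_kX_{n-k}$ is the symmetric convolution with the endpoint terms $k=1,n-1$ removed. This trades the awkward weight $(k-1)$ for the constant $\tfrac{n-2}{2}$ and lets me estimate the symmetric quantity $U_n$ instead. Since $\{X_m\}$ grows super-exponentially, $U_n$ is dominated by its two extreme terms $k=2$ and $k=n-2$, each equal to $X_2X_{n-2}=2X_{n-2}$, so that $U_n$ is close to $4X_{n-2}$.

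I would then run a strong induction on $n$, carrying three statements: $r_m < 2m$ for all $m\ge 2$, $r_m > 2m-1$ for all $m\ge 6$, and (a consequence of these) strict monotonicity of $(r_m)$. For the lower bound I keep only the terms $k\in\{2,3,n-3,n-2\}$ of $U_n$, which give $U_n \ge 4X_{n-2}+16X_{n-3}$; applying the inductive upper bound $r_{n-2}<2(n-2)$ to replace $X_{n-3}$ yields $S_n = \tfrac{n-2}{2}U_n > 2nX_{n-2} > (2n-2)X_{n-2} > X_{n-1}$, the last step again using $r_{n-1}<2(n-1)$. For the upper bound I must instead control the whole middle block $R_n := \sum_{k=3}^{n-3}X_kX_{n-k}$: using monotonicity of $(r_m)$, the products $X_kX_{n-k}$ are maximized at the ends of this range, so $R_n \le 8(n-5)X_{n-3}$, and the inductive lower bound $r_{n-2}>2(n-2)-1$ gives $R_n < 4X_{n-2}$, hence $U_n < 8X_{n-2}$ and $S_n < (4n-8)X_{n-2} < (4n-6)X_{n-2} < 2X_{n-1}$, the final step using $r_{n-1}>2n-3$. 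Finitely many small cases ($n=6,7$, together with the values $X_1,\dots,X_7$ that pin down monotonicity and the upper bound at small indices) are checked by direct computation.

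The main obstacle is the upper bound, specifically estimating the middle block $R_n$: unlike the lower bound, which needs only a couple of terms, here every term must be accounted for, and the argument closes only because the super-exponential growth (encoded by monotonicity of $r_m$) concentrates $U_n$ at its extreme terms. A secondary point requiring care is the bootstrapping structure: proving the upper bound at $n$ consumes the inductive lower bounds at $n-1,n-2$, while proving the lower bound at $n$ consumes the inductive upper bounds, so one must check that neither direction is circular within a single step and that the lower bound, which only holds from $n=6$ on, is never invoked below that range. This is exactly why $n=6,7$ fall outside the clean inductive step and are verified by hand.
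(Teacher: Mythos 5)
Your proof is correct, and its skeleton matches the paper's: both arguments induct on $n$ with the two-sided ratio hypothesis $2m-1<X_m/X_{m-1}<2m$, use the recurrence to reduce the lemma to the sandwich $X_{n-1}<S_n<2X_{n-1}$, obtain the lower bound by retaining a few extreme terms of the convolution, and dispose of small $n$ by direct computation (the paper checks $n=6,7,8$ and notes the upper bound separately for $2\le n\le 5$; you check $n=6,7$ plus the small-index values). Where you genuinely diverge is the upper bound. The paper symmetrizes only for odd $n=2k-1$ (pairing $j$ with $n-j$ to get the constant weight $2k-3$, with the even case ``handled similarly''), rewrites $X_{n-j}$ as $X_{n-1}$ divided by a telescoping product of ratios $Q_i=X_i/X_{i-1}$, proves the resulting terms strictly decrease, and bounds the half-range sum by its first two terms. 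Your parity-free identity $S_n=\frac{n-2}{2}U_n$ removes the case split entirely, and your use of monotonicity of $r_m$ --- which does follow from the two inductive bounds for $m\ge 6$, since $r_{m+1}>2m+1>2m>r_m$, supplemented by direct checks at small indices --- shows the products $X_kX_{n-k}$ are maximized at $k=3,n-3$, so the whole middle block is absorbed at once via $R_n\le 8(n-5)X_{n-3}<4X_{n-2}$ (valid since $8(n-5)<4(2n-5)$). This is arguably cleaner than the paper's decreasing-terms estimate, at the modest cost of verifying monotonicity at indices below $6$ by hand. You also correctly flagged the two places where care is required and where the paper's and your base cases come from: your lower-bound term selection $k\in\{2,3,n-3,n-2\}$ degenerates at $n=6$ (where $n-3=3$), and your upper-bound step consumes $r_{n-2}>2n-5$, available only for $n\ge 8$, so $n=6,7$ must indeed be checked directly; the bootstrapping is non-circular exactly as you describe, since each bound at $n$ uses only bounds at indices $n-1,n-2$.
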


\begin{proof}
We proceed by strong induction on $n$: the inequality is easily verified for $n=6,7,8$ using Theorem \ref{recurrence}. Furthermore, note that $X_{n}<2nX_{n-1}$ for $n=2,3,4,5$ as well. Now, assume $n\ge9$.

By Lemma \ref{recurrence}, it is enough to show 
\begin{equation}\label{Xand2X}
X_{n-1}<\sum_{j=2}^{n-2}(j-1)X_{j}X_{n-j}<2X_{n-1}.
\end{equation}

We first show the left hand side of (\ref{Xand2X}). Now, we have
\begin{align*}
\sum_{j=2}^{n-2}(j-1)X_{j}X_{n-j}&>X_{2}X_{n-2}+(n-4)X_{n-3}X_{3}+(n-3)X_{n-2}X_{2}\\
&=2(n-2)X_{n-2}+8(n-4)X_{n-3}\\
&>\left(\frac{n-2}{n-1}+\frac{2(n-4)}{(n-2)(n-1)}\right)X_{n-1}\\
&>X_{n-1},
\end{align*}
where we have applied the inductive hypothesis.

It remains to prove the right hand side of (\ref{Xand2X}). First, suppose that $n$ is odd, with $n=2k-1,k\ge4$. Let $Q_{i}=X_{i}/X_{i-1}$ for each $i$; we know that $Q_{i}>2i-1$ for all $i\ge5$. Then, we have

\begin{align*}
\sum_{j=2}^{n-2}(j-1)X_{j}X_{n-j}&=(2k-3)\sum_{j=2}^{k-1}X_{j}X_{2k-1-j}\\
&=(2k-3)X_{n-1}\sum_{j=2}^{k-1}\frac{X_{j}}{Q_{2k-2}Q_{2k-3}\cdots Q_{2k-j}}\\
&<(2k-3)X_{n-1}\sum_{j=2}^{k-1}\frac{X_{j}}{(4k-5)(4k-7)\cdots(4k-2j-1)}
\end{align*}
However, we claim that the terms in the sum are strictly decreasing. This amounts to the inequality $(4k-2j-1)X_{j-1}>X_{j}$ for $3\le j\le k-1$, which follows by the inductive hypothesis as $4k-2j-1>2j$. Thus, 
\begin{align*}
&(2k-3)X_{n-1}\sum_{j=2}^{k-1}\frac{X_{j}}{(4k-5)(4k-7)\cdots(4k-2j-1)}\\
<\ &(2k-3)X_{n-1}\left(\frac{X_{2}}{4k-5}+\frac{(k-3)X_{3}}{(4k-5)(4k-7)}\right)\\
=\ &X_{n-1}\left(\frac{4k-6}{4k-5}+\frac{(4k-12)(4k-6)}{(4k-5)(4k-7)}\right)\\
<\ &2X_{n-1},
\end{align*}
where we substitute $X_{2}=2,X_{3}=8$. The case in which $n$ is even may be handled similarly, and the induction is complete.
\end{proof}

\begin{corollary}\label{limit}
There exists a limit 
\begin{equation*}
C=\lim_{n\rightarrow\infty}\frac{X_{n}}{(2n-1)!!},
\end{equation*}
and furthermore, $C>0$.
\end{corollary}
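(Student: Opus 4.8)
The plan is to show that the sequence $a_n := X_n/(2n-1)!!$ is increasing and bounded above; since $a_n>0$ this immediately produces a positive limit $C=\sup_n a_n$. Monotonicity is essentially free from Lemma \ref{ratio}: writing $\frac{a_n}{a_{n-1}}=\frac{X_n}{(2n-1)X_{n-1}}$, the left inequality $(2n-1)X_{n-1}<X_n$ gives $a_n>a_{n-1}$ for $n\ge 6$, and $a_n\ge a_6>0$. So the entire content lies in the upper bound, and the naive estimate from the right inequality of Lemma \ref{ratio}, namely $\frac{a_n}{a_{n-1}}<\frac{2n}{2n-1}$, is useless by itself, since $\prod_k\frac{2k}{2k-1}=\frac{(2k)!!}{(2k-1)!!}\sim\sqrt{\pi k}$ diverges.

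To isolate the right quantity I would use $X_n=2(n-1)X_{n-1}+S_n$ with $S_n:=\sum_{j=2}^{n-2}(j-1)X_jX_{n-j}$ (Theorem \ref{recurrence}) and set $r_n:=\frac{X_n}{X_{n-1}}-(2n-1)=\frac{S_n}{X_{n-1}}-1$, so that $r_n\in(0,1)$ by Lemma \ref{ratio} and $\frac{a_n}{a_{n-1}}=1+\frac{r_n}{2n-1}$. Then $\log a_n=\log a_{n_0}+\sum_{k>n_0}\log\!\bigl(1+\tfrac{r_k}{2k-1}\bigr)$ is bounded above as soon as $\sum_k\frac{r_k}{2k-1}<\infty$, and since $\log(1+x)\le x$ it suffices to prove the refined estimate $r_k=O(1/k)$, which forces $\frac{r_k}{2k-1}=O(1/k^2)$.

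To obtain $r_k=O(1/k)$ I would split $S_k$ into its final term $j=k-2$, equal to $2(k-3)X_{k-2}$ (using $X_2=2$), and the remaining sum $\sum_{j=2}^{k-3}(j-1)X_jX_{k-j}$. For the final term, $X_{k-1}>(2k-3)X_{k-2}$ gives $\frac{2(k-3)X_{k-2}}{X_{k-1}}=1-O(1/k)$. The decisive step is that the remaining sum is $O(X_{k-2})=O(X_{k-1}/k)$: its largest summand is $j=k-3$, namely $8(k-4)X_{k-3}$ (using $X_3=8$), which is $\Theta(X_{k-2})$ after applying $X_{k-1}>(2k-3)(2k-5)X_{k-3}$, and the earlier summands decay fast enough to sum to the same order. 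Combining, $\frac{S_k}{X_{k-1}}=1+O(1/k)$, i.e. $r_k=O(1/k)$, and then $\sum_k\frac{r_k}{2k-1}=O\bigl(\sum_k k^{-2}\bigr)<\infty$, so $a_n$ is bounded above, hence convergent, with $C\ge a_6>0$.

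I expect the main obstacle to be precisely the estimate $\sum_{j=2}^{k-3}(j-1)X_jX_{k-j}=O(X_{k-2})$ with an \emph{absolute} implied constant. This is the same type of sum already controlled in the proof of Lemma \ref{ratio}, where one shows that the rescaled summands are strictly decreasing via the inductive inequalities relating $X_j$ and $X_{j-1}$; here I would rerun that monotonicity argument, but retain the two suppressed factors $(2k-3)(2k-5)$ appearing in each ratio $X_{k-j}/X_{k-1}$ so that the extra factor of $1/k$ is tracked rather than discarded. The care required is to ensure the constant is uniform in $k$, so that $\sum_k\frac{r_k}{2k-1}$ genuinely converges rather than merely having individually small terms.
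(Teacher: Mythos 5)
Your argument is correct and completable, but it takes a genuinely different --- and much heavier --- route than the paper, whose entire upper bound is one observation you missed: $X_n$ counts \emph{full} wiring diagrams, and the total number of wiring diagrams of order $n$ (perfect matchings of the $2n$ medial boundary vertices) is $(2n-1)!!$, so $X_n\le(2n-1)!!$ and $a_n\le 1$ outright. Combined with eventual strict monotonicity from the left inequality of Lemma \ref{ratio} --- exactly your first paragraph --- the paper's proof is finished in two sentences; this combinatorial bound is implicitly the point of the whole section, since the limit is interpreted as a density of full diagrams among all diagrams. Your analytic substitute for the bound does go through: the reductions are sound ($r_n\in(0,1)$, $a_n/a_{n-1}=1+r_n/(2n-1)$, summability of $r_n/(2n-1)$ suffices), and the key uniform estimate $\sum_{j=2}^{k-3}(j-1)X_jX_{k-j}=O(X_{k-1}/k)$ can be carried out just as you propose, with one caveat: monotonicity of the rescaled summands alone is not enough (bounding all $\sim k$ terms by the largest, which is $O(X_{k-1}/k)$ via $X_{k-1}>(2k-3)(2k-5)X_{k-3}$, only yields $O(X_{k-1})$); you must mirror the paper's device in the proof of Lemma \ref{ratio} one order deeper, bounding the truncated sum by its largest term plus the term count times the next term, which is $O(X_{k-1}/k)+k\cdot O(X_{k-1}/k^2)=O(X_{k-1}/k)$ with an absolute constant, exactly as the paper bounds its symmetrized sum by $\frac{X_2}{4k-5}+\frac{(k-3)X_3}{(4k-5)(4k-7)}$ at the previous order. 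What your longer route buys is a convergence rate ($r_n=O(1/n)$, hence $a_n-a_{n-1}=O(1/n^2)$, which the paper's proof does not give) and independence from the combinatorial model: it uses only Theorem \ref{recurrence} and Lemma \ref{ratio}, so it is the style of argument one would need for the generalized sequences $X_{n,\lambda}$ in the paper's final conjecture, where for $\lambda>2$ no combinatorial interpretation --- and hence no free upper bound --- is known.
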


\begin{proof}
The sequence $X_{n}/(2n-1)!!$ is bounded above by 1 and is eventually strictly increasing by Lemma \ref{ratio}, so the limit $C$ exists. Furthermore, $C>0$ because $X_{n}/(2n-1)!!$ is eventually increasing.
\end{proof}

To prove Theorem \ref{asymptotic}, we will estimate the number of non-full wiring graphs of order $n$. Let $D_{n}$ denote the number of wiring diagrams formed in the following way: for $1\le j\le n-2$, choose $j$ pairs of adjacent boundary vertices, and for each pair, connect the two medial boundary vertices between them. Then, with the remaining $2n-2j$ vertices, form a full wiring diagram of order $n-j$, which in particular has no dividing lines whose endpoints are adjacent boundary vertices. It is clear that all such diagrams are non-full.

For completeness, we will also include in our count the wiring diagram where all pairs of adjacent boundary vertices give dividing lines, but because we are interested in the asymptotic behavior of $D_{n}$, this addition will be of no consequence. It is easily seen that 
\begin{equation*}
D_{n}=1+\sum_{j=1}^{n-2}\binom{n}{j}X_{n-j}.
\end{equation*}

Now, let $E_{n}$ be the number of non-full wiring diagrams not constructed above. Consider the following construction: choose an ordered pair of distinct, non-adjacent boundary vertices on our boundary circle. Then, on each side of the directed segment, construct any wiring diagram. This construction yields 
\begin{equation*}
Y_{n}=n\sum_{j=2}^{n-2}(2n-2j-1)!!(2j-1)!!
\end{equation*}
total (not necessarily distinct) wiring diagrams, which clearly overcounts $E_{n}$.

We now state two lemmas:

\begin{lemma}\label{dbound}
$D_{n}/X_{n}\to \sqrt{e}-1$ as $n\to\infty$
\end{lemma}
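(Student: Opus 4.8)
The plan is to start from the closed form $D_{n}=1+\sum_{j=1}^{n-2}\binom{n}{j}X_{n-j}$ given just above the statement, divide through by $X_{n}$, and study each piece. Writing $\frac{D_{n}}{X_{n}}=\frac{1}{X_{n}}+\sum_{j=1}^{n-2}\binom{n}{j}\frac{X_{n-j}}{X_{n}}$, the first term vanishes because $X_{n}\to\infty$, so the entire problem reduces to the asymptotics of the sum. The heuristic that pins down the answer is that $\binom{n}{j}\sim n^{j}/j!$ while, by Lemma \ref{ratio}, the ratio $Q_{m}=X_{m}/X_{m-1}$ satisfies $2m-1<Q_{m}<2m$, so $X_{n}/X_{n-j}=\prod_{i=0}^{j-1}Q_{n-i}\sim(2n)^{j}$ for fixed $j$. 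Hence each term behaves like $\frac{n^{j}/j!}{(2n)^{j}}=\frac{1}{2^{j}\,j!}$, and since $\sum_{j\ge 1}\frac{1}{2^{j}j!}=e^{1/2}-1=\sqrt{e}-1$, this is exactly the claimed limit. The work is to turn this heuristic into a rigorous interchange of limit and summation.

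The clean way to do this is dominated convergence for series (counting measure on the positive integers). First I would establish a summable dominating bound. For $j\le n-5$ all indices $n,n-1,\dots,n-j+1$ are at least $6$, so Lemma \ref{ratio} applies to every factor of $\prod_{i=0}^{j-1}Q_{n-i}$, giving $\frac{X_{n}}{X_{n-j}}>\prod_{i=0}^{j-1}(2(n-i)-1)$. Consequently
\[
\binom{n}{j}\frac{X_{n-j}}{X_{n}}<\frac{1}{j!}\prod_{i=0}^{j-1}\frac{n-i}{2(n-i)-1}\le\frac{1}{j!},
\]
since each factor $\frac{n-i}{2(n-i)-1}\le 1$. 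Thus, after extending the $n$-th summand by zero for $j>n-5$, the sequence of terms is bounded uniformly in $n$ by the summable sequence $1/j!$.

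Next I would verify the pointwise limit: for fixed $j$, the bounds $2(n-i)-1<Q_{n-i}<2(n-i)$ force each factor $\frac{n-i}{Q_{n-i}}\to\frac12$, so $\binom{n}{j}\frac{X_{n-j}}{X_{n}}=\frac{1}{j!}\prod_{i=0}^{j-1}\frac{n-i}{Q_{n-i}}\to\frac{1}{2^{j}\,j!}$. Dominated convergence then yields $\sum_{j=1}^{n-5}\binom{n}{j}\frac{X_{n-j}}{X_{n}}\to\sum_{j\ge1}\frac{1}{2^{j}j!}=\sqrt{e}-1$. It remains to dispose of the three boundary terms $j\in\{n-4,n-3,n-2\}$, where Lemma \ref{ratio} does not apply directly because $X_{n-j}\in\{X_{2},X_{3},X_{4}\}$ is a constant; here $\binom{n}{j}$ is polynomial in $n$ while iterating $X_{n}>(2n-1)X_{n-1}$ shows $X_{n}$ grows faster than any polynomial, so each of these terms tends to $0$. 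The main obstacle is precisely this justification of the limit–sum interchange, namely producing the uniform domination and segregating the few large-$j$ tail terms that fall outside the range of Lemma \ref{ratio}; once these are in hand the result follows immediately.
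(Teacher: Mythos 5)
Your proposal is correct and takes essentially the same route as the paper: both expand $D_{n}/X_{n}$ term by term, use Lemma \ref{ratio} to show each ratio $\binom{n}{j}X_{n-j}/X_{n}$ behaves like $1/(2^{j}j!)$, segregate the boundary terms $j\in\{n-4,n-3,n-2\}$ where the lemma's indices drop below $6$, and sum to $\sqrt{e}-1$. The only difference is packaging: you justify the limit--sum interchange via dominated convergence with the summable majorant $1/j!$, whereas the paper subtracts $\sum_{j}1/(2^{j}j!)$ and bounds the resulting error sum by explicit estimates (splitting at $j<n/2$ and absorbing the last three terms into a $Kn$ factor), which is a mild and valid variation rather than a different proof.
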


\begin{lemma}\label{ebound}
$Y_{n}/X_{n}\to 0$ as $n\to\infty$.
\end{lemma}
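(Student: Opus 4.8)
The plan is to leverage Corollary~\ref{limit}, which supplies a constant $C>0$ with $X_n/(2n-1)!!\to C$. Since $C\neq 0$, the identity $Y_n/X_n = \frac{Y_n/(2n-1)!!}{X_n/(2n-1)!!}$ reduces the claim to showing $Y_n/(2n-1)!!\to 0$. Setting $a_j = \dfrac{(2n-2j-1)!!\,(2j-1)!!}{(2n-1)!!}$, so that $Y_n/(2n-1)!! = n\sum_{j=2}^{n-2}a_j$, the goal becomes to prove that the sum $S_n := \sum_{j=2}^{n-2}a_j$ decays faster than $1/n$; I will in fact show $S_n = O(1/n^2)$, which suffices.

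First I would record two elementary facts about the $a_j$. By the symmetry of the defining product, $a_j = a_{n-j}$, and a direct cancellation of double factorials gives the clean ratio
\begin{equation*}
\frac{a_{j+1}}{a_j} = \frac{2j+1}{2n-2j-1}.
\end{equation*}
This ratio is less than $1$ precisely when $j<(n-1)/2$, so the sequence $a_j$ is decreasing on $[2,\lfloor n/2\rfloor]$ and is largest at the endpoints $j=2,\,n-2$, where $a_2 = a_{n-2} = \dfrac{3}{(2n-1)(2n-3)} = \Theta(1/n^2)$.

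Next I would bound $S_n$ by splitting at $j=\lfloor n/3\rfloor$ and using $a_j=a_{n-j}$ to restrict attention to $j\le n/2$. For $2\le j\le n/3$ the ratio above is at most $(2n/3+1)/(4n/3-1)$, which is bounded below $1$ (say by $3/5$) once $n$ is large; hence $a_j\le a_2(3/5)^{j-2}$ there, and these terms sum geometrically to $O(a_2)=O(1/n^2)$. For the genuinely middle range $n/3<j\le n/2$, monotonicity bounds each term by $a_{\lfloor n/3\rfloor}\le a_2(3/5)^{\,n/3-2}$, which is exponentially small; even multiplied by the $O(n)$ terms in this range, the contribution is $o(1/n^2)$. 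Applying symmetry to cover $n/2<j\le n-2$ then yields $S_n=O(1/n^2)$, whence $Y_n/(2n-1)!! = n\,S_n = O(1/n)\to 0$.

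The main obstacle is that the naive estimate---bounding all $n-3$ terms by the largest term $a_2=\Theta(1/n^2)$---only gives $S_n=O(1/n)$ and therefore $Y_n/(2n-1)!!=O(1)$, which is not good enough. The crux is thus to exploit the rapid decay of $a_j$ as $j$ moves inward: a uniform geometric bound is only available while the ratio $(2j+1)/(2n-2j-1)$ stays bounded below $1$, i.e. for $j\lesssim n/3$, and the handful of remaining middle terms must be controlled separately through their exponential smallness. Carrying out this two-regime split carefully is where the real work lies; everything else is routine manipulation of double factorials.
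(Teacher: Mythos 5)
Your proof is correct and takes essentially the same route as the paper: reduce via Corollary \ref{limit} to showing $n\sum_{j=2}^{n-2}(2j-1)!!\,(2n-2j-1)!!/(2n-1)!!\to 0$, then observe that the sum is $O(n^{-2})$ because it is dominated by the endpoint terms $j=2,\,n-2$. Your two-regime split with the ratio $a_{j+1}/a_j=(2j+1)/(2n-2j-1)$ is simply a careful write-up of the step the paper dismisses as ``straightforward to check,'' and you correctly identify why that care is needed (bounding all $n-3$ terms by the maximum would only yield $nS_n=O(1)$).
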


From here, we will be able to establish the desired asymptotic.

\begin{proof}[Proof of Theorem \ref{asymptotic}]
$X_{n}$, $D_{n}$, and $E_{n}$ together count the total number of wiring diagrams, which is equal to $(2n-1)!!$. Thus, 
\begin{equation*}
\frac{(2n-1)!!}{X_{n}}=\frac{X_{n}+D_{n}+E_{n}}{X_{n}}\to1+(\sqrt{e}-1)+0=e^{1/2},
\end{equation*}
assuming Lemmas \ref{dbound} and \ref{ebound} (we have $Y_{n}/X_{n}\to0$, so $E_{n}/X_{n}\to0$ as well), so the desired conclusion is immediate from taking the reciprocal.
\end{proof}

Thus, it remains to prove Lemmas \ref{dbound} and \ref{ebound}, which we defer to Appendix \ref{bounds}.

Let us summarize now the results of the last two sections:

\begin{theorem}\label{enumsummary}
\begin{enumerate}
\item[(a)] $X_{1}=1$ and 
\begin{equation*}
X_{n}=2(n-1)X_{n-1}+\sum_{j=2}^{n-2}(j-1)X_{j}X_{n-j}.
\end{equation*}
\item[(b)] $[t^{n-1}]X(t)^{n}=n\cdot(2n-3)!!$, where $X(t)$ is the generating function for the sequence $\{X_{i}\}$.
\item[(c)] $X_{n}/(2n-1)!! \to e^{-1/2}$ to $\infty$.
\end{enumerate}
\end{theorem}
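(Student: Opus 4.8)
The plan is to recognize that Theorem~\ref{enumsummary} merely collects the three enumerative results established over the two preceding sections, so the proof consists of invoking each in turn. Part (a) is precisely the recurrence of Theorem~\ref{recurrence}; part (b) is Theorem~\ref{gf}; and part (c) is Theorem~\ref{asymptotic}. No new argument is needed, and the substance lies in having proved those three statements, so my proposal is really a roadmap of how each was (or would be) obtained.

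For (a), the approach is bijective: I would count full wiring diagrams of order $n$ by conditioning on the behavior at the last boundary vertex $V_n$. Either $A_nB_n$ is already a wire, contributing $X_{n-1}$ diagrams, or one passes to the refinement $M^n_-$ and tracks whether it is full. The crossed and uncrossed expansion operations are inverse to refinement up to motion-equivalence, and Lemmas~\ref{explemma}, \ref{maxdivline}, and \ref{reflemma} pin down exactly when fullness is lost under expansion and how a non-full refinement splits $M$ into two smaller full diagrams $M_1,M_2$. Summing the three cases and simplifying using $X_1=1$ yields the stated recurrence. For (b), the plan is again bijective, following Callan: one shows $n\cdot(2n-3)!!$ counts the ``$2n$-odd'' wiring diagrams and constructs a bijection between these and ordered lists of $n$ full wiring diagrams whose orders sum to $n-1$, passing through the non-crossing-partition/Dyck-path correspondence and cutting and concatenating the Dyck paths attached to the blocks of the dividing-line partition. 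Since $[t^{n-1}]X(t)^n$ is exactly the number of such lists, the identity follows.

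The real content, and the step I expect to be the main obstacle, is part (c). Here the plan is to partition all $(2n-1)!!$ wiring diagrams as $(2n-1)!! = X_n + D_n + E_n$, where $D_n$ counts the diagrams built by choosing $j$ pairs of adjacent boundary vertices to cap off and completing a full diagram of order $n-j$, and $E_n$ counts the remaining non-full diagrams. First I would use Lemma~\ref{ratio} to show $X_n/(2n-1)!!$ converges to some $C>0$ (Corollary~\ref{limit}), and then identify $C$ by establishing $D_n/X_n \to \sqrt{e}-1$ (Lemma~\ref{dbound}) and $E_n/X_n \to 0$, the latter via the overcount $Y_n$ and Lemma~\ref{ebound}. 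Dividing through gives $(2n-1)!!/X_n \to 1 + (\sqrt{e}-1) + 0 = e^{1/2}$, and taking reciprocals yields the limit $e^{-1/2}$. The delicate work is entirely in the asymptotic estimates of Lemmas~\ref{dbound} and~\ref{ebound}, which I would defer to the appendix; assembling the summary from the three completed theorems is then immediate.
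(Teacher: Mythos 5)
Your proposal is correct and matches the paper exactly: Theorem~\ref{enumsummary} is indeed just a summary statement whose parts (a), (b), and (c) are Theorems~\ref{recurrence}, \ref{gf}, and \ref{asymptotic} respectively, and your sketches of how each is proved (the expansion/refinement count, the Callan-style bijection with $2n$-odd diagrams, and the decomposition $(2n-1)!!=X_n+D_n+E_n$ with Lemmas~\ref{dbound} and~\ref{ebound}) faithfully reproduce the paper's arguments.
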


To conclude this section, we propose the following generalization of Theorem \ref{enumsummary}:

\begin{conjecture}
Let $\lambda$ be a positive integer. Consider the sequence $\{X_{n,\lambda}\}$ defined by $X_{1,\lambda}=1$, and 
\begin{equation*}
X_{n}=\lambda(n-1)X_{n-1,\lambda}+\sum_{k=2}^{n-2}(j-1)X_{j,\lambda}X_{n-k,\lambda}.
\end{equation*}
Then, let $X_{\lambda}(t)$ be the generating function for the sequence $\{X_{n,\lambda}\}$. Then, 
\begin{equation*}
[t^{n-1}]X_{\lambda}(t)^{n}=n\cdot(1/\lambda)_{n}
\end{equation*}
and
\begin{equation*}
\lim_{n\rightarrow\infty}\frac{X_{\lambda,n}}{(1/\lambda)_{n}}=\frac{1}{\sqrt[n]{e}},
\end{equation*}
where $(a)_{n}=a(a+1)\cdots(a+(n-1))$.
\end{conjecture}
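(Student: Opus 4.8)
The plan is to lift both halves of Theorem~\ref{enumsummary} to the $\lambda$-family, the key preliminary being to identify the combinatorial model whose enumeration obeys the defining recurrence. The coefficient $\lambda(n-1)$ replaces the $2(n-1)$ of Theorem~\ref{recurrence}, which arose because each of the $n-1$ wires of a diagram of order $n-1$ admitted exactly two expansions, crossed and uncrossed. I would therefore look for a family of ``$\lambda$-wiring diagrams'' in which expansion at a wire has $\lambda$ branches while the refinement of Definition~\ref{refine} remains a one-sided inverse. With such a model, the analogues of Lemmas~\ref{explemma}--\ref{reflemma}---expansion creates or destroys at most one dividing line, and the maximal dividing line of a non-full refinement splits it into two full pieces---would reproduce the defining recurrence, and would identify the total count of all $\lambda$-wiring diagrams of order $n$ as the $\lambda$-analogue of the double factorial, $\prod_{k=0}^{n-1}(1+k\lambda)$.

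For the first identity I would follow the generating-function argument behind Theorem~\ref{gf} and \cite{callan}. Writing $A_\lambda(t)$ for the generating function of all $\lambda$-wiring diagrams, the decomposition of an arbitrary diagram along its dividing lines gives a functional equation relating $A_\lambda$ to $X_\lambda$, and Lagrange inversion applied to this relation extracts $[t^{n-1}]X_\lambda(t)^n$ in closed form. In parallel I would set up the bijective version: the weighted Dyck-path and non-crossing-partition correspondence of \cite{callan}, now carrying a $\lambda$-weight at each expansion, should match a $\lambda$-analogue of the ``$2n$-odd'' diagrams with lists of $n$ full diagrams of total order $n-1$, giving a combinatorial proof.

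For the asymptotic I would generalize the Stein--Everett estimates of \S\ref{asymptoticsection} (\cite{stein}). The first step is the ratio lemma, the analogue of Lemma~\ref{ratio}, in which the bounds $(2n-1)X_{n-1}<X_n<2nX_{n-1}$ become $(1+(n-1)\lambda)X_{n-1,\lambda}<X_{n,\lambda}<\lambda n\,X_{n-1,\lambda}$, provable by the same strong induction on the convolution sum. Since $1+(n-1)\lambda$ is exactly the ratio of consecutive terms of $\prod_{k=0}^{n-1}(1+k\lambda)$, it would follow as in Corollary~\ref{limit} that $X_{n,\lambda}/\prod_{k=0}^{n-1}(1+k\lambda)$ converges to a positive limit. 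I would then define $D_{n,\lambda}$ by marking some adjacent pairs of boundary vertices as forced dividing lines and filling the complement with a full diagram, and $Y_{n,\lambda}$ by cutting along a directed chord, and prove the analogues of Lemmas~\ref{dbound} and \ref{ebound}, namely $D_{n,\lambda}/X_{n,\lambda}\to\sum_{j\ge1}\lambda^{-j}/j!=e^{1/\lambda}-1$ and $Y_{n,\lambda}/X_{n,\lambda}\to0$. Together with the partition of the total count as $X_{n,\lambda}+D_{n,\lambda}+E_{n,\lambda}$, this gives $\prod_{k=0}^{n-1}(1+k\lambda)/X_{n,\lambda}\to e^{1/\lambda}$, reflecting the heuristic that the number of adjacent-pair dividing lines is asymptotically Poisson with parameter $1/\lambda$.

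The principal obstacle is the bookkeeping of the $\lambda$-weights, and I prefer to flag the resulting normalization openly rather than absorb it silently. Carried through the Lagrange inversion, the first identity produces $\prod_{k=0}^{n-2}(1+k\lambda)=\lambda^{n-1}(1/\lambda)_{n-1}$, while the asymptotic produces the denominator $\prod_{k=0}^{n-1}(1+k\lambda)=\lambda^{n}(1/\lambda)_{n}$ and the exponent $e^{-1/\lambda}=1/\sqrt[\lambda]{e}$; specializing at $\lambda=2$ recovers $(2n-3)!!$, $(2n-1)!!$, and $e^{-1/2}$, i.e. Theorems~\ref{gf} and \ref{asymptotic}. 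Because $[t^{n-1}]X_\lambda(t)^n$ is a positive integer while $(1/\lambda)_n$ is not (for instance $(1/3)_1=1/3$), the first identity is forced to carry the factor $\lambda^{n-1}$; settling the statement exactly as written, with denominator $(1/\lambda)_n$ and exponent $1/\sqrt[n]{e}$, therefore comes down to pinning this normalization and the precise form of the exponent, and I expect essentially all the remaining work to concentrate there.
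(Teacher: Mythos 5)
The statement you were asked to prove is, in the paper, a conjecture: the authors give no proof, and they say explicitly that the desired argument would be ``a proof exhibiting and exploiting a combinatorial interpretation for the sequence $\{X_{n,\lambda}\}$,'' that the cases $\lambda=1,2$ are handled (\cite{callan}, \cite[\S 3]{salvatore}, and their own \S\ref{enumerative}), and that no such interpretation is known for $\lambda>2$. Your proposal is exactly this program --- posit a family of ``$\lambda$-wiring diagrams'' in which expansion at a wire has $\lambda$ branches, rerun the analogues of Lemmas \ref{explemma}--\ref{reflemma} to get the recurrence and a total count of $\prod_{k=0}^{n-1}(1+k\lambda)$, redo the Callan-style Dyck-path bijection for the first identity, and the Stein--Everett partition $X_{n,\lambda}+D_{n,\lambda}+E_{n,\lambda}$ of the total count for the asymptotic --- but the existence of that model is precisely the open problem, not a preliminary. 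For $\lambda=2$ the two expansions at a wire are the two resolutions of a crossing, an intrinsically binary geometric fact; nothing in your outline supplies a third (let alone $\lambda$-th) branch, and every subsequent step quantifies over the hypothesized objects: without them there is no set of cardinality $\prod_{k=0}^{n-1}(1+k\lambda)$ to partition, no dividing lines for an analogue of Lemma \ref{maxdivline}, and no diagrams to count in $D_{n,\lambda}$ or $Y_{n,\lambda}$. So your concluding assessment --- that essentially all the remaining work is pinning the normalization --- is where the attempt fails; the missing idea is the model itself, and the proposal leaves the conjecture open, as the paper does.

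That said, two of your observations are correct and worth preserving. First, the normalization: as printed the conjecture cannot be literally right, since at $\lambda=2$ one has $(1/2)_n=(2n-1)!!/2^n$, so Theorem \ref{asymptotic} forces $X_{n,2}/(1/2)_n\to\infty$, and $\sqrt[n]{e}$ must be a typo for $\sqrt[\lambda]{e}$ (likewise the recurrence's mixed $j,k$ indices); the intended statements are $[t^{n-1}]X_{\lambda}(t)^{n}=n\prod_{k=0}^{n-2}(1+k\lambda)=n\,\lambda^{n-1}(1/\lambda)_{n-1}$ and $X_{n,\lambda}/\bigl(\lambda^{n}(1/\lambda)_{n}\bigr)\to e^{-1/\lambda}$, which at $\lambda=2$ recover $n\,(2n-3)!!$ and $e^{-1/2}$, exactly as you computed. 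Second, one piece of your plan is genuinely model-free: the analogue of Lemma \ref{ratio}, $(1+(n-1)\lambda)X_{n-1,\lambda}<X_{n,\lambda}<\lambda n\,X_{n-1,\lambda}$, plausibly goes through by the same strong induction using only the recurrence and small initial values, whence existence and positivity of the limit as in Corollary \ref{limit}. If you want a route that avoids the missing model entirely, the paper's own pointer is \cite[\S 3]{salvatore}, which obtains the $\lambda=1$ asymptotic without the SIF interpretation; extending that analytic argument, or working Lagrange inversion directly on the functional equation implied by the recurrence alone, is the plausible way to attack the corrected statement --- but none of that is carried out in your proposal.
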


A proof exhibiting and exploiting a combinatorial interpretation for the sequence $\{X_{n,\lambda}\}$ would be most desirable, as we have done for $\lambda=2$. However, no such interpretation is known for $\lambda>2$. The case $\lambda=1$ is handled in \cite{callan} and \cite[\S 3]{salvatore}, though the latter does not use the interpretation of $X_{n,1}$ as SIF permutations of $[n]$ to obtain the asymptotic.

Interestingly, if we define $X_{n,-1}$ analogously, we get $X_{n,-1}=(-1)^{n+1}C_{n}$, where $C_{n}$ denotes the $n$-th Catalan number, see \cite{oeis}. 

\subsection{Rank sizes $|EP_{n,r}|$}

\begin{proposition}\label{highrank}
For non-negative $c\le n-2$, we have $|EP_{n,\binom{n}{2}-c}|=\binom{n-1+c}{c}$. Furthermore, $|EP_{n,\binom{n}{2}-(n-1)}|=\binom{2n-2}{n-1}-n$.
\end{proposition}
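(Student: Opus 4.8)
The plan is to translate everything into the language of full lensless wiring diagrams, which by Theorems \ref{critmedbij} and \ref{graded} are in bijection with the elements of $EP_{n}$ and are graded by their number of crossings (each crossing of a lensless medial graph being an edge of the corresponding critical graph). Up to motion-equivalence such a diagram is nothing but a perfect matching of the $2n$ medial boundary vertices on the circle, two wires crossing exactly when their endpoints interleave. First I would record two easy facts: the number of crossings is at most $\binom{n}{2}$, with equality only for the matching joining each point to its antipode (all $\binom{n}{2}$ pairs of wires then cross), so the top rank $\binom{n}{2}$ is a single element; and, writing $c$ for the number of \emph{non-crossing} (parallel) pairs of wires, an element has rank $\binom{n}{2}-c$ exactly when its matching has $c$ non-crossing pairs. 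Thus it suffices to count full matchings with exactly $c$ non-crossing pairs.

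The combinatorial heart of the argument is a bijection, valid for $c\le n-1$, between all matchings (full or not) of $2n$ cyclic points with exactly $c$ non-crossing pairs and weak compositions $(a_{1},\dots,a_{n})$ of $c$ into $n$ nonnegative parts, of which there are $\binom{n+c-1}{c}$. Starting from the antipodal matching, I would show that when $c$ is small the non-crossing pairs localize: around the circle there are $n$ ``slots,'' one between each pair of cyclically consecutive wires, and a matching of co-rank $c$ is obtained by installing in slot $i$ a nested bundle responsible for exactly $a_{i}$ non-crossing pairs, with $\sum_{i}a_{i}=c$. The content is that this local data can be prescribed freely and independently precisely so long as no single slot is forced to absorb too much, i.e. for $c\le n-1$, and that distinct compositions yield distinct matchings realizing the prescribed number of non-crossings.

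It then remains to decide which of these matchings are full. I would prove that a matching of co-rank $c$ fails to be full exactly when some slot value reaches $a_{i}=n-1$: in that case the corresponding wire becomes parallel to all $n-1$ others, forcing it to join the two medial boundary vertices lying inside a single boundary block $\{B_{k},A_{k+1}\}$, which produces the dividing line $V_{k}V_{k+1}$. Conversely, any dividing line splits the $n$ boundary blocks into groups of sizes $s$ and $n-s$ whose wires are matched internally, forcing at least $s(n-s)\ge n-1$ non-crossing pairs, with equality only when a single block is isolated. Consequently, for $c\le n-2$ no part of a composition can equal $n-1$, every such matching is full, and $|EP_{n,\binom{n}{2}-c}|=\binom{n+c-1}{c}$. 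For $c=n-1$ the only non-full matchings are the $n$ coming from the compositions with all weight in one slot, so subtracting these from the total $\binom{2n-2}{n-1}$ gives $|EP_{n,\binom{n}{2}-(n-1)}|=\binom{2n-2}{n-1}-n$.

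The main obstacle is the structural bijection of the second paragraph: proving that for $c\le n-1$ the non-crossing pairs genuinely decouple into $n$ independent nested slots and that the assignment of slot-depths is an unconstrained weak composition. The delicate point is the interaction of bundles installed in adjacent slots, where one must verify that the crossing count adds correctly and that no unexpected lens or extra non-crossing is created. Once this localization is established, the fullness dichotomy above — and hence both displayed formulas — follows from the short counting arguments, the range $c\le n-2$ being exactly where fullness is automatic and $c=n-1$ being the first value at which the single-block degenerations appear.
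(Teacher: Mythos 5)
Your overall skeleton is the same as the paper's, and the second half of your argument is correct: the paper also reduces to counting lensless wiring diagrams (equivalently, matchings of the $2n$ medial boundary vertices, two wires crossing iff their endpoints interleave, each pair crossing at most once on pain of a lens) with $\binom{n}{2}-c$ crossings, and its fullness analysis is arithmetically identical to yours --- a dividing line splitting the wires into groups of sizes $j$ and $n-j$ forces at most $\binom{j}{2}+\binom{n-j}{2}\le\binom{n-1}{2}$ crossings, i.e.\ at least $j(n-j)\ge n-1$ non-crossing pairs, with equality only for $j=1$, and at $c=n-1$ the non-full diagrams are exactly the $n$ configurations with one wire spanning a gap between consecutive boundary vertices and the remaining $n-1$ wires pairwise crossing. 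The genuine gap is the counting step. The paper does not prove that the number of matchings with exactly $c$ non-crossing pairs is $\binom{n+c-1}{c}$ for $c\le n-1$; it cites Riordan \cite{riordan}, whose generating polynomial $T_n(q)$ (formula (\ref{crossinggf})) has this as its $q^{\binom{n}{2}-c}$ coefficient, as noted on p.\ 218 of that paper. You instead propose to prove it via a bijection with weak compositions, and that bijection is exactly what you have not supplied: you assert that the non-crossing pairs ``localize'' into $n$ independent slots with freely prescribable nested bundles, flag it yourself as the main obstacle, and give no argument. Moreover, as literally stated the decoupling picture is wrong. Already for $n=3$, $c=2$, the six matchings are the antipodal matching with a single short chord placed in any one of the $2n=6$ gaps between consecutive medial boundary vertices: for instance $\{15,26,34\}$, which in your scheme would have to be the composition $(1,1,0)$ with independent defects in two distinct slots, is in fact produced by one local defect, and the short chord at an ``even'' gap straddles two of your slots. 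For $c$ up to $n-1$ the defects can interact and merge in more ways still, so injectivity and surjectivity of the slot-to-composition map require a precise normal form that your sketch does not provide --- this is precisely the content of Riordan's computation, not a routine verification.

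To repair the proof with minimal work, replace the second paragraph of your proposal by a citation of \cite{riordan} for the crossing distribution and its top coefficients; your remaining arguments then complete Proposition \ref{highrank} exactly as the paper does. Alternatively, carrying out your bijection rigorously would yield a self-contained, bijective alternative to Riordan's generating-function identity (a genuinely different and arguably more illuminating route), but it is a substantive combinatorial lemma in its own right, not a step one may assert in passing.
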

\begin{proof}For convenience, put $N=\binom{n}{2}$. We claim that for $c\le n-2$, any wiring diagram of order $n$ with $N-c$ crossings is necessarily full. Suppose instead that we have a dividing line, dividing our circle in to two wiring diagrams of orders with $j,n-j$. Then, there are at most 
\begin{equation*}
\binom{j}{2}+\binom{n-j}{2}\le\binom{n-1}{2}=N-(n-1)
\end{equation*}
crossings, so if $c\le n-2$ we cannot have a dividing line.

Thus, for $c\le n-2$, it suffices to compute the number of circular wiring diagrams with $N-c$ crossings. By \cite[(1)]{riordan}, this number is the coefficient of the $q^{N-c}$ term of the polynomial 
\begin{equation}\label{crossinggf}
T_{n}(q)=(1-q)^{-n}\sum_{j=0}^{n}(-1)^{j}\left[\binom{2n}{n-j}-\binom{2n}{n-j-1}\right]q^{\binom{j+1}{2}},
\end{equation}
which, as noted in \cite[p. 218]{riordan}, is $\binom{n+c-1}{n-1}$ for $c\le n-1$. This immediately gives the desired result for $c\le n-2$.

For $c=n-1$, we have, by the above, $\binom{2n-2}{n-1}$ wiring diagrams with $N-c$ crossings; we need to count the number of such wiring diagrams that contain a dividing line. However, note that if our dividing line separates the circle in to wiring diagrams of orders $j,n-j$ for $1<j\le n/2$, there are at most $\binom{n-2}{2}+1$ crossings (using a similar argument to that in the first paragraph), which is strictly less than $N-(n-1)$, so we must have $j=1$.

Furthermore, by the first paragraph, if $j=1$, we need exactly $\binom{n-1}{2}$ crossings. Thus, a non-full wiring diagram with $N-(n-1)$ crossings must connect two adjacent medial boundary vertices between two boundary vertices, and connect all of the other medial boundary vertices in such the unique way such that we have the maximal possible number of crossings between the $n-1$ wires. There are clearly $n$ such non-full wiring diagrams, giving $|EP_{n,N-(n-1)}|=\binom{2n-2}{n-1}-n$, as desired.
\end{proof}

Proposition \ref{highrank} gives an exact formula for $|EP_{n,r}|$ for $r$ large, but no general formula is known for general $r$. For fixed $r$ and $n$ sufficiently large, one will only have finitely many cases to enumerate for possible configurations of an electrical network, but the casework becomes cumbersome quickly. However, the M\"{o}bius Inversion Formula gives us an expression for the generating function for the number of full wiring diagrams of order $n$, counted by number of crossings. 

Let $NC_{n}$ be the (graded) poset of non-crossing partitions on $n$, ordered by refinement. By \cite[Proposition 2.3]{blass}, we have $\mu(\widehat{0},\widehat{1})=(-1)^{n-1}C_{n-1}$ in $NC_{n}$, and furthermore, for any $\pi\in NC_{n}$, the interval $(\widehat{0},\pi)$ is isomorphic to a product of the partition lattices $NC_{k}$, where $k$ ranges over the block sizes of $\pi$. Given $\pi\in NC_{n}$, $\pi$ may be represented as a set of dividing lines in a disk $D$ with boundary vertices $V_{1},V_{2},\ldots,V_{n}$ as follows: draw the dividing line $V_{i}V_{j}$ if $i,j$ are in the same block of $\pi$. Furthermore, the set of dividing lines for a wiring diagram yields a non-crossing partition $[n]$ in the same way.

\begin{figure}
\begin{center}
\includegraphics[scale=0.4]{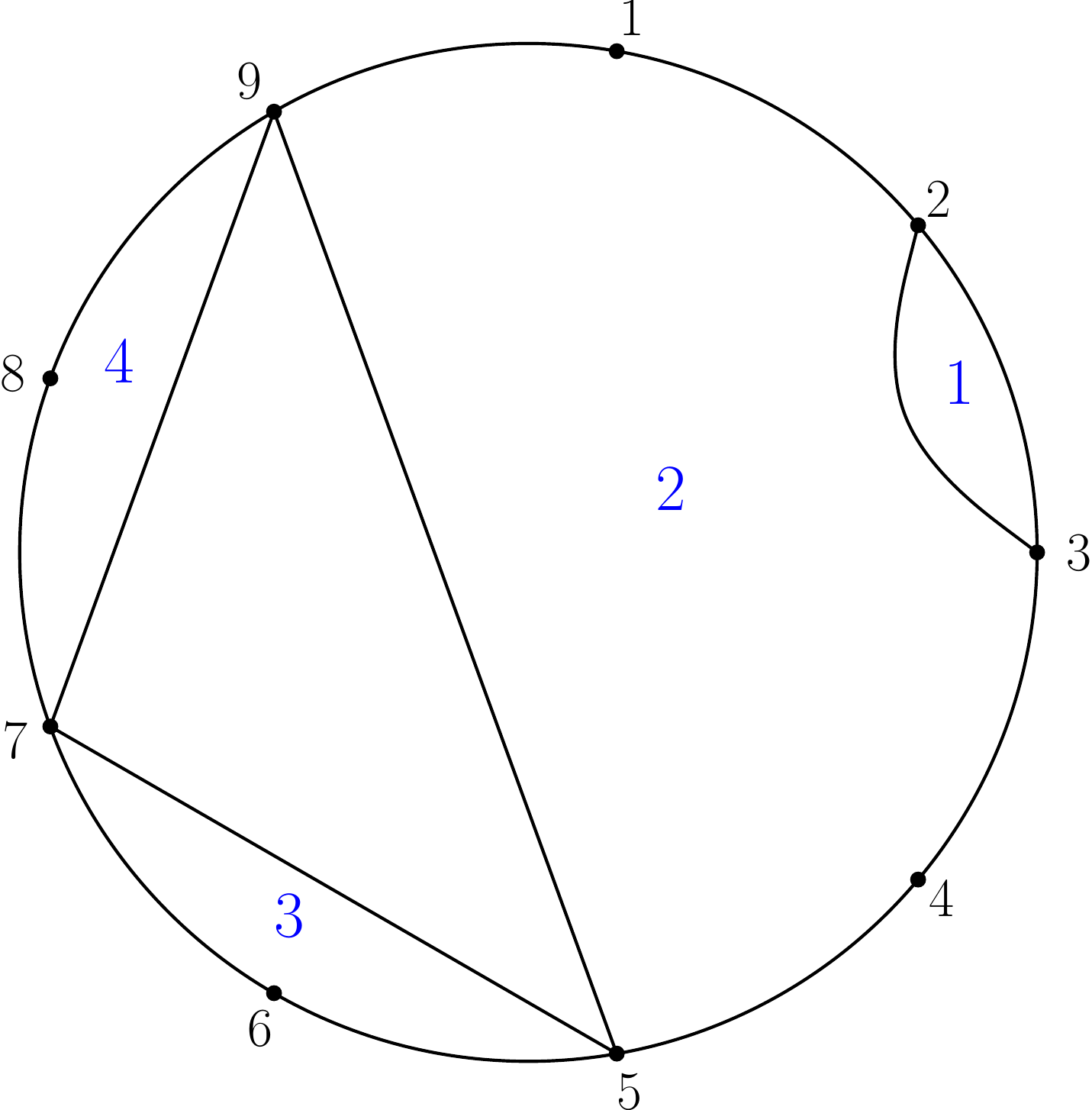}
\caption{$[1][23][4][579][6][8]$ breaks the disk in to four wiring regions.}\label{noncrossing}
\end{center}
\end{figure}

Let $k_{\pi}$ denote the number of blocks in $\pi$. It is clear that drawing these dividing lines of $\pi$ breaks $D$ in to $n+1-k_{\pi}$ regions in which wires can be drawn (see Figure \ref{noncrossing} for an example). Let $a_{\pi,1},\ldots,a_{\pi,n+1-k\pi}$ denote the numbers of boundary vertices drawn in these regions. Finally, let $X_{n}(q)$ be the rank-generating function for $EP_{n}$, that is, the polynomial in $q$ such that the coefficient of $q^{r}$ is $|EP_{n,r}|$. Then, by M\"{o}bius Inversion, we get:

\begin{proposition}
\begin{equation}
X_{n}(q)=\sum_{\pi\in NC_{n}}\left((-1)^{n-k_{\pi}}\prod_{i=1}^{k_{\pi}}C_{\pi_{i}-1}\prod_{j=1}^{n+1-k_{\pi}}T_{a_{j}}(q)\right),
\end{equation}
where $k_{\pi},a_{\pi,j}$ are as before, $\pi_{i}$ denotes the number of elements in the $i$-th block of $\pi$, and the polynomial $T_{m}(q)$ is as in (\ref{crossinggf}).
\end{proposition}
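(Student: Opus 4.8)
The plan is to read the stated identity as a single Möbius inversion over the non-crossing partition lattice $NC_n$, which is what the phrase ``by M\"obius Inversion'' in the run-up to the statement anticipates. First I would record that the coefficients appearing in the formula are exactly Möbius-function values. By the interval factorization cited from \cite{blass}, the lower interval $[\widehat{0},\pi]$ in $NC_n$ is isomorphic to $\prod_i NC_{\pi_i}$ over the blocks of $\pi$, so the Möbius function is multiplicative over blocks; combined with the value $\mu_{NC_m}(\widehat{0},\widehat{1})=(-1)^{m-1}C_{m-1}$ this gives
\[
\mu_{NC_n}(\widehat{0},\pi)=\prod_{i=1}^{k_\pi}\mu_{NC_{\pi_i}}(\widehat{0},\widehat{1})=(-1)^{n-k_\pi}\prod_{i=1}^{k_\pi}C_{\pi_i-1}.
\]
Hence the proposition is equivalent to the assertion $X_n(q)=\sum_{\pi\in NC_n}\mu_{NC_n}(\widehat{0},\pi)\,g(\pi)$, where I abbreviate $g(\pi):=\prod_{j=1}^{n+1-k_\pi}T_{a_{\pi,j}}(q)$, and the task reduces to producing the forward summation relation that inverts to this.

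The central object is a notion of compatibility. Say a wiring diagram $W$ of order $n$ is \emph{compatible} with $\pi\in NC_n$ if every dividing line of $\pi$ (drawn as in the paragraph preceding the statement) is also a dividing line of $W$; equivalently, if every wire of $W$ is confined to a single region of $\pi$. The key lemma I would establish is that the dividing lines of any fixed $W$ are pairwise non-crossing, and that consequently the set of $\pi$ compatible with $W$ is a principal lower interval $[\widehat{0},\mathrm{type}(W)]$ of $NC_n$, where $\mathrm{type}(W)$ is the unique coarsest compatible partition. The monotonicity is immediate in one direction (a coarser $\pi$ has more dividing lines and smaller regions, so compatibility only gets harder), and the content of the lemma is that the compatible partitions are closed under join, so that a unique maximum exists. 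In particular $\mathrm{type}(W)=\widehat{0}$ holds precisely when $W$ has no dividing lines at all, i.e. exactly for the full wiring diagrams counted by $X_n(q)$ (via Theorem~\ref{critmedbij}).

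With this in hand I would verify the forward relation. Drawing the dividing lines of $\pi$ cuts $D$ into $n+1-k_\pi$ regions, and a wiring diagram is compatible with $\pi$ if and only if restricting it to each region yields an independent wiring diagram; here one checks that each region receives an even number of medial boundary vertices, that the resulting orders satisfy $\sum_j a_{\pi,j}=n$ (a region with no boundary vertices contributing the empty diagram $T_0(q)=1$), and that crossings add across regions. Since by \eqref{crossinggf} the polynomial $T_{a_{\pi,j}}(q)$ is the full crossing generating function of order-$a_{\pi,j}$ wiring diagrams, this identifies $g(\pi)$ as the crossing generating function of all diagrams compatible with $\pi$. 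Using the interval lemma, $g(\pi)=\sum_{W:\;\pi\le\mathrm{type}(W)}q^{\mathrm{cr}(W)}=\sum_{\tau\ge\pi}f(\tau)$, where $f(\tau)$ collects the diagrams with $\mathrm{type}(W)=\tau$. Möbius inversion on $NC_n$ then gives $f(\pi)=\sum_{\tau\ge\pi}\mu(\pi,\tau)g(\tau)$, and evaluating at $\pi=\widehat{0}$ together with $f(\widehat{0})=X_n(q)$ and the Möbius computation of the first paragraph yields the proposition. (Equivalently, one may swap the order of summation in $\sum_\tau\mu(\widehat{0},\tau)g(\tau)=\sum_W q^{\mathrm{cr}(W)}\sum_{\tau\in[\widehat{0},\mathrm{type}(W)]}\mu(\widehat{0},\tau)$, where the inner sum is $1$ for full $W$ and $0$ otherwise by the defining property of $\mu$.)

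The Möbius computation and the passage from \eqref{crossinggf} to the product $g(\pi)$ are routine. The main obstacle is the interval lemma of the second paragraph: proving that the dividing lines of a wiring diagram are non-crossing and that the compatible partitions form a single interval $[\widehat{0},\mathrm{type}(W)]$, so that the indicator ``$W$ is compatible with $\pi$'' is governed by the one relation $\pi\le\mathrm{type}(W)$. Once the region decomposition is confirmed to be an order-respecting bijection onto tuples of wiring diagrams with $\sum_j a_{\pi,j}=n$, the inversion is immediate.
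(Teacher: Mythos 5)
Your proposal is correct and takes essentially the same route as the paper: the paper's entire proof is the phrase ``by M\"obius Inversion'' following exactly your setup (the dividing-line representation of $\pi\in NC_{n}$, the Blass--Sagan values giving $\mu(\widehat{0},\pi)=(-1)^{n-k_{\pi}}\prod_{i}C_{\pi_{i}-1}$, and Riordan's $T_{m}(q)$ as the per-region crossing generating function). Your interval lemma --- that the dividing lines of $W$ yield a non-crossing partition $\mathrm{type}(W)$ and that the compatible $\pi$ form precisely $[\widehat{0},\mathrm{type}(W)]$ --- together with the factorization $g(\pi)=\prod_{j}T_{a_{\pi,j}}(q)$ simply makes explicit the steps the paper asserts without proof.
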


Let us also mention a formula for the bivariate generating function $\sum_{n}X_{n}(q)t^{n}$, whose $q^{r}t^{n}$-coefficient is $|EP_{n,r}|$. Recalling, from \cite{oeis}, the formula
\begin{equation*}
\sum_{n}X_{n}t^{n}=t/F^{\langle -1\rangle}(t),
\end{equation*}
where $F(t)=t\sum_{n}(2n-1)!!t^{n}$ is a shift of the generating function for the sequence of double factorials and $F^{\langle -1\rangle}$ denotes its formal inverse. We may then replace $F(t)$ with the bivariate generating function $F(t,q)=t\sum_{n,r}T_{n}(q)t^{n}$, and obtain
\begin{equation*}
\sum_{n}X_{n}(q)t^{n}=t/F^{\langle -1\rangle}(t,q),
\end{equation*}
where here the inverse is taken with respect to $t$ only.

To conclude this section, we cannot resist making the following conjecture:

\begin{conjecture}\label{unimodal}
$EP_{n}$ is rank-unimodal.
\end{conjecture}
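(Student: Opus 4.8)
The plan is to prove rank-unimodality by establishing the stronger statement that the rank-generating polynomial $X_{n}(q)$, whose $q^{r}$-coefficient is $|EP_{n,r}|$, is log-concave. Since $EP_{n}$ has a minimum $\widehat{0}$ (the empty graph) and is graded by Theorem \ref{graded}, the interval $[\widehat{0},x]$ is graded of length $r$ for every $x$ of rank $r$; as the top rank $\binom{n}{2}$ is nonempty, so is every rank in between. Thus the sequence $|EP_{n,0}|,\ldots,|EP_{n,\binom{n}{2}}|$ is strictly positive with no internal zeros, and log-concavity of $X_{n}(q)$ would immediately force unimodality, which is exactly Conjecture \ref{unimodal}.

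The top ranks can be disposed of at once using Proposition \ref{highrank}: for $0\le c\le n-2$ we have $|EP_{n,\binom{n}{2}-c}|=\binom{n-1+c}{c}$, which is strictly increasing in $c$. Hence the top $n-1$ rank sizes are strictly decreasing as the rank increases toward $\binom{n}{2}$, so the downward half of unimodality is already known on this stretch. What remains is to control the rank sizes from $0$ up to the peak and across the middle. One should not expect help from the classical symmetric-chain machinery: $EP_{n}$ is not rank-symmetric. Indeed, by Theorem \ref{recurrence} one computes $X_{4}=52$, whereas the symmetric sequence forced by the outer values $1,4,10,16$ (from Proposition \ref{highrank}) would total only $46$, so no symmetric chain decomposition or Peck-poset structure is available.

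For the bulk of the ranks I would pursue two routes. The analytic route targets real-rootedness of $X_{n}(q)$ (which implies log-concavity): one refines the recurrence of Theorem \ref{recurrence} by crossing number, turning it into a relation among the $X_{k}(q)$ and the Touchard--Riordan polynomials $T_{m}(q)$ of (\ref{crossinggf}) counting all wiring diagrams by crossings, and attempts to show that the operations involved (products of block polynomials, and the passage recorded by the bivariate identity $\sum_{n}X_{n}(q)t^{n}=t/F^{\langle -1\rangle}(t,q)$) preserve real-rootedness or at least log-concavity, invoking interlacing and multiplier-sequence techniques. The combinatorial route instead builds explicit order-matchings: injections $EP_{n,r}\hookrightarrow EP_{n,r+1}$ below the peak and $EP_{n,r}\hookrightarrow EP_{n,r-1}$ above it, realized by an ``add/remove a crossing'' operation on full lensless wiring diagrams; each such injection yields the corresponding inequality on rank sizes directly.

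The hard part, in both routes, is the same structural difficulty that rules out symmetric chains. In the analytic route, neither the alternating M\"obius sum expressing $X_{n}(q)$ over $NC_{n}$ nor the compositional inversion $t/F^{\langle -1\rangle}(t,q)$ manifestly preserves real-rootedness or log-concavity, so a bespoke preservation lemma tailored to these specific operations would have to be proven. In the combinatorial route, the obstacle is that the natural crossing-adding operation can create a lens or a dividing line, so the delicate work is to define the injections so that their images are genuinely full and lensless, and to make the two families of injections globally compatible enough to pin down a single peak rank. I expect this last point---reconciling the local ``add a crossing'' moves into a coherent matching that respects both fullness and lenslessness across all ranks---to be the principal obstacle.
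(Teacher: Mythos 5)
First, a point of calibration: the statement you are proving is Conjecture \ref{unimodal}, which the paper does not prove. The paper's only support for it is computational (the table of rank sizes for $n\le 8$, obtainable in principle from the M\"obius-inversion formula for $X_{n}(q)$ over $NC_{n}$), so there is no proof of the paper's to compare yours against. The question is therefore only whether your proposal closes the conjecture on its own, and it does not. Your preliminary observations are correct and worth keeping: $EP_{n}$ is graded (Theorem \ref{graded}) with $\widehat{0}$ the edgeless graph and a rank-$\binom{n}{2}$ top, so every rank is nonempty and log-concavity would indeed imply unimodality; Proposition \ref{highrank} does give strictly decreasing sizes on the top $n-1$ ranks; and your non-rank-symmetry computation checks out ($X_{4}=52$ by Theorem \ref{recurrence}, versus the total $1+4+10+16+10+4+1=46$ that symmetry with the known outer values would force), so Peck/symmetric-chain machinery is genuinely unavailable.

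The gap is that both of your routes terminate exactly where the proof would have to begin, and you replace the conjecture with strictly stronger unproven claims. On the analytic route: real-rootedness or log-concavity of $X_{n}(q)$ is stronger than unimodality, and the two expressions the paper provides for $X_{n}(q)$ are structurally hostile to the standard preservation tools you invoke. The M\"obius-inversion formula is an alternating sum (the factor $(-1)^{n-k_{\pi}}$ produces large negative terms), and interlacing/multiplier-sequence arguments do not pass through signed sums; likewise the compositional inverse in $\sum_{n}X_{n}(q)t^{n}=t/F^{\langle -1\rangle}(t,q)$ is not an operation known to preserve real-rootedness of coefficient polynomials, and no ``bespoke preservation lemma'' is stated, let alone proved. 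On the combinatorial route: you correctly identify that an ``add/remove a crossing'' map must avoid creating lenses and dividing lines while staying injective rank by rank, but you define no such map, and the difficulty is not merely technical --- breaking a crossing in a lensless diagram can force a cascade of lens resolutions that changes the rank by more than one, and legality (Proposition \ref{coverrelation} territory) constrains which crossings may be broken, so the naive candidate maps are not even well-defined as functions $EP_{n,r}\to EP_{n,r\pm1}$ before injectivity is addressed. As written, the proposal is a reasonable research program with correct boundary observations, but it contains no step that advances the conjecture beyond where the paper leaves it.
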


In support of Conjecture \ref{unimodal}, let us list the rank sizes of $EP_{n}$ below, for small values of $n$.

\begin{tabular}{ |l|l| }
\hline
\multicolumn{2}{ |c| }{Rank Sizes} \\
\hline
$n$ & Rank Size \\ \hline
1 & 1 \\ \hline
2 & 1, 1 \\ \hline
3 & 1, 3, 3, 1 \\ \hline
4 & 1, 6, 14, 16, 10, 4, 1 \\ \hline
5 & 1, 10, 40, 85, 110, 97, 65, 35, 15, 5, 1, \\ \hline
6 & 1, 15, 90, 295, 609, 873, 948, 840, 636, 421, 246, 126, 56, 21, 6, 1 \\ \hline
\multirow{2}{*}{7} & 1, 21, 175, 805, 2366, 4872, 7567, 9459, \\
& 10031, 9359, 7861, 6027, 4249, 2765, 1661, 917, 462, 210, 84, 28, 7, 1 \\ \hline
\multirow{2}{*}{8} & 1, 28, 308, 1876, 7350, 20272, 42090, 69620, 96334, 115980, 125044, 123176, 112380, 95836, \\
& 76868, 58220, 41734, 28344, 18236, 11096, 6364,
3424, 1716, 792, 330, 120, 36, 8, 1 \\
\hline
\end{tabular}

\section{Acknowledgments}

This work was undertaken at the REU (Research Experiences for Undergraduates) program at the University of Minnesota-Twin Cities, supported by NSF grants DMS-1067183 and DMS-1148634. The authors thank Joel Lewis, Gregg Musiker, Pavlo Pylyavskyy, and Dennis Stanton for their leadership of the program, and are especially grateful to Joel Lewis and Pavlo Pylyavskyy for introducing them to this problem and for their invaluable insight and encouragement. The authors also thank to Thomas McConville for many helpful discussions. Finally, the authors thank Vic Reiner, Jonathan Schneider, and Dennis Stanton for suggesting references, and Damien Jiang and Ben Zinberg for formatting suggestions.

\appendix

\section{Proofs of Lemmas \ref{dbound} and \ref{ebound}}\label{bounds}

Recall the definitions of $D_{n},E_{n},X_{n}$ from \S \ref{asymptoticsection}. We will prove Lemmas \ref{dbound} and \ref{ebound}, that $D_{n}/X_{n}\to\sqrt{e}-1$ and $E_{n}/X_{n}\to0$, respectively.

\begin{proof}[Proof of Lemma \ref{dbound}]
We may as well consider $D_{n}-1=\sum_{j=1}^{n-2}\binom{n}{j}X_{n-j}$. Using the notation $Q_{i}=X_{i}/X_{i-1}$, as in the proof of Lemma \ref{ratio}, we have
\begin{align*}
\frac{\sum_{j=1}^{n-2}\binom{n}{j}X_{n-j}}{X_{n}}&=\sum_{j=1}^{n-2}\frac{1}{j!}\cdot\frac{n(n-1)\cdots(n-j+1)}{Q_{n}Q_{n-1}\cdots Q_{n-j+1}}\\
&=\sum_{j=1}^{n-2}\frac{1}{2^{j}j!}\cdot\frac{2n(2n-2)\cdots(2n-2j+2)}{Q_{n}Q_{n-1}\cdots Q_{n-j+1}}\\
&=\sum_{j=1}^{n-2}\frac{1}{2^{j}j!}+\sum_{j-1}^{n-2}\frac{1}{2^{j}j!}\left(\frac{2n(2n-2)\cdots(2n-2j+2)}{Q_{n}Q_{n-1}\cdots Q_{n-j+1}}-1\right).
\end{align*}
As $n\rightarrow\infty$, first summand above converges to $\sqrt{e}-1$, so it is left to check that the second summand converges to zero.

Note that, by Lemma \ref{ratio},
\begin{equation*}
\frac{2n(2n-2)\cdots(2n-2j+1)}{Q_{n}Q_{n-1}\cdots Q_{n-j+2}}>1.
\end{equation*}
Now,
\begin{align*}
0&<\sum_{j=1}^{n-2}\frac{1}{2^{j}j!}\left(\frac{2n(2n-2)\cdots(2n-2j+2)}{Q_{n}Q_{n-1}\cdots Q_{n-j+1}}-1\right)\\
&<\sum_{j=1}^{n-5}\frac{1}{2^{j}j!}\left(\frac{2n(2n-2)\cdots(2n-2j+2)}{Q_{n}Q_{n-1}\cdots Q_{n-j+1}}-1\right)\\
&\qquad+Kn\left[\frac{1}{2^{n-4}(n-4)!}+\frac{1}{2^{n-3}(n-3)!}+\frac{1}{2^{n-2}(n-2)!}\right],
\end{align*}
for some positive constant $K$, because 
\begin{align*}
&\frac{2n(2n-2)\cdots(2n-2j+2)}{Q_{n}Q_{n-1}\cdots Q_{n-j+1}}\\
<\ &2n\cdot\frac{2n-2}{Q_{n}}\cdot\frac{2n-4}{Q_{n-1}}\cdots\frac{2n-2j+2}{Q_{n-j+2}}\cdot\frac{1}{Q_{n-j+1}}\\
<\ &Kn,
\end{align*}
as by Lemma \ref{ratio}, all but a fixed number of the fractions are less than 1, and those which are not are constant. It is then easy to see that the term 
\begin{equation*}
Kn\left[\frac{1}{2^{n-4}(n-4)!}+\frac{1}{2^{n-3}(n-3)!}+\frac{1}{2^{n-2}(n-2)!}\right]
\end{equation*}
goes to zero as $n\rightarrow\infty$. Now, applying Lemma \ref{ratio} again (noting that the indices are all at least 6),
\begin{align*}
&\sum_{j=1}^{n-5}\frac{1}{2^{j}j!}\left(\frac{2n(2n-2)\cdots(2n-2j+2)}{Q_{n}Q_{n-1}\cdots Q_{n-j+1}}-1\right)\\
<\ &\sum_{j=1}^{n-5}\frac{1}{2^{j}j!}\left(\frac{2n(2n-2)\cdots(2n-2j+2)}{(2n-1)(2n-3)\cdots(2n-2j+1)}-1\right)\\
<\ &\sum_{j=1}^{n-5}\frac{1}{2^{j}j!}\cdot\left(\frac{2n}{2n-2j+1}-1\right)\\
<\ &\sum_{j=1}^{n-5}\frac{1}{2^{j}j!}\cdot\frac{2j-1}{2n-2j+1}\\
<\ &\sum_{j=1}^{n-5}\frac{1}{2^{j-1}(j-1)!}\cdot\frac{1}{2n-2j+1}.
\end{align*}
It is enough to show that the above sum goes to zero as $n\rightarrow\infty$. To do this, we split it in to two sums:
\begin{align*}
&\sum_{j=1}^{n-5}\frac{1}{2^{j-1}(j-1)!}\cdot\frac{1}{2n-2j+1}\\
=\ &\sum_{1\le j<n/2}\frac{1}{2^{j-1}(j-1)!}\cdot\frac{1}{2n-2j+1}+\sum_{n/2\le j\le n-5}\frac{1}{2^{j-1}(j-1)!}\cdot\frac{1}{2n-2j+1}\\
<\ &\sum_{1\le j<n/2}\frac{1}{2^{j-1}(j-1)!}\cdot\frac{1}{n}+\sum_{n/2\le j\le n-5}\frac{1}{2^{j-1}(j-1)!}\\
<\ &\frac{\sqrt{e}}{n}+\sum_{n/2\le j\le n-5}\frac{1}{2^{j-1}(j-1)!}.
\end{align*}
The first summand clearly tends to zero as $n\rightarrow\infty$. The rest of the sum must tend to zero as well, as it is the tail of a convergent sum, so the proof is complete.
\end{proof}

\begin{proof}[Proof of Lemma \ref{ebound}]
First, note that by Corollary \ref{limit}, $X_{i}$ is within a (positive) constant factor of $(2i-1)!!$ for each $i$. Thus, to prove that $E_{n}/X_{n}\to0$, we may as well prove that 
\begin{equation*}
n\sum_{j=2}^{n-2}\frac{(2j-1)!!(2n-2j-1)!!}{(2n-1)!!}\to0.
\end{equation*}
It is straightforward to check that the largest terms of the sum are when $j=2,n-2$, and these terms are of inverse quadratic order. Thus, 
\begin{equation*}
n\sum_{j=2}^{n-2}\frac{(2j-1)!!(2n-2j-1)!!}{(2n-1)!!}<nO(n^{-2})=O(n^{-1}),
\end{equation*}
and the conclusion follows.
\end{proof}

\end{document}